\documentclass{amsart}
\usepackage{latexsym, graphicx, amssymb, cite, hyperref, tikz, tikz-cd, framed, mathrsfs}

\newtheorem{theorem}{Theorem}[section] % Comment out [section] if you want simple theorem numbering.
\newtheorem{lemma}[theorem]{Lemma}
\newtheorem{proposition}[theorem]{Proposition}
\newtheorem{corollary}[theorem]{Corollary}

\newtheorem{question}[theorem]{Question}
\newtheorem*{theorem*}{Theorem}
\newtheorem*{lemma*}{Lemma}
\newtheorem*{proposition*}{Proposition}
\newtheorem*{corollary*}{Corollary}
\newtheorem*{conjecture*}{Conjecture}
\newtheorem*{question*}{Question}

\theoremstyle{definition}
\newtheorem{definition}[theorem]{Definition}
\newtheorem{example}[theorem]{Example}

\newtheorem*{definition*}{Definition}
\newtheorem*{example*}{Example}
\newtheorem*{exercise*}{Exercise}

\theoremstyle{remark}

\newtheorem*{remark*}{Remark}

\numberwithin{equation}{section} % Comment out this line if you want simple equation numbering.

\newcommand{\R}{{\mathbb R}}

\newcommand{\Z}{{\mathbb Z}}

\newcommand{\N}{{\mathbb N}}

\newcommand{\1}{{\mathbf 1}}

\DeclareMathOperator{\sgn}{sgn}

\DeclareMathOperator{\Isom}{Isom}
\DeclareMathOperator{\interior}{int}

\usetikzlibrary{angles,arrows.meta,decorations.markings,decorations.pathreplacing}

\title{(Not) hearing where certain triangular drums are struck}
\author[T. Dixon]{Tyler Dixon}
\author[A. Miah]{Alif Miah}
\author[N. Safronov]{Nikita Safronov}
\author[S. Snyder]{Sean Snyder}
\author[A. Vakaryuk]{Anatolii Vakaryuk}

\author[E. Wyman]{Emmett Wyman}
\address{Department of Mathematics and Statistics, Binghamton University, Vestal NY}
\email{ewyman@binghamton.edu}
\thanks{This project is supported in part by NSF Grant DMS-2204397.} % Funding information

\begin{document}

\begin{abstract}
    We investigate whether or not one can hear at what point (up to symmetry) a drum is struck for some special planar triangles. We prove that one can hear where the equilateral and isosceles right triangles are struck. We also prove that the 30-60-90 triangle, surprisingly, has two audibly indistinguishable points. This is the first known topologically connected such example.
\end{abstract}

\maketitle

\section{Introduction}\label{sec: introduction}

\subsection{Background}

In his celebrated 1966 paper \cite{kac}, Kac asks, \emph{Can one hear the shape of a drum?} Put a little more precisely, does the Laplace spectrum uniquely determine the isometry class of a planar domain? This problem and its investigation has a long and celebrated history, and at the same time, there is much about it we still do not know. There are plenty of results, positive and negative, that together paint a nuanced picture of the problem. In his original paper, Kac used the isoperimetric inequality to prove disks are spectrally unique \cite{kac}. But later, Gordon, Webb, and Wolpert exhibited two isospectral, nonisometric polygonal planar domains \cite{gordon_webb_wolpert}. More recently, Grieser and Maronna showed that you can hear the shape of a triangular drumhead, provided you know your drumhead is some triangle to begin with \cite{grieser}. (See also Zelditch's survey \cite{zelditch_survey_2004} on the problem and the references therein).

Here, we investigate a different yet closely related problem posed by the last author and Xi \cite{echolocation}: \emph{Can one hear where a drum is struck?} In plain terms, if a drum of known shape is struck at a point, can one determine where it was struck up to symmetry? There is a small catalog of positive examples and few known negative examples, all of the latter being topologically disconnected.

The known positive planar examples include disks, squares, and rectangles with Dirichlet boundary conditions (see \cite{echolocation}, or see \cite{ugthesis} for a cleaner exposition with a wider range of boundary conditions). The initial objective of this paper was to expand this catalog to a few other triangles, namely the equilateral, 45-45-90, and 30-60-90 triangle with Dirichlet boundary conditions, and in the process we discovered there is a pair of audibly indistinguishable points on the 30-60-90 triangle, yielding the first topologically connected negative answer to the question, \emph{can one hear where a drum is struck?}

\subsection{Statement of Results}

We start by precisely describing the question. We summarize the definitions and a few key facts from \cite{echolocation}, but only for planar domains. See \cite{evansPartialDifferentialEquations2022} for general background on elliptic operators, their spectra and eigenfunctions.

Let $\Omega$ be a bounded open subset of $\R^2$ with piecewise-smooth boundary. A (Dirichlet) eigenfunction $e$ of the Laplace operator $\Delta$ on $\Omega$, with eigenvalue $-\lambda^2$ satisfies the boundary-value problem
\[
    \begin{cases}
        \Delta e(x) = -\lambda^2 e(x) & \text{ for } x \in \Omega, \\
        e(x) = 0 & \text{ for } x \in \partial \Omega.
    \end{cases}
\]
The Hilbert space $L^2(\Omega)$ admits an orthonormal basis $\{e_j : j \in \N\}$ of Dirichlet Laplace eigenfunctions with respective eigenvalues $\{-\lambda_j^2 : j \in \N\}$, where
\[
    0 < \lambda_1 \leq \lambda_2 \leq \ldots \qquad \text{ and } \qquad \lambda_j \to \infty.
\]
The \emph{local Weyl counting function} on $\Omega$ is defined as
\begin{equation}\label{def: local Weyl counting function}
    N(x,\lambda) = \sum_{\lambda_j \leq \lambda} |e_j(x)|^2 \qquad \text{ for $x \in \Omega$}
\end{equation}
and does not depend on the choice of eigenbasis. 

Recall, an \emph{isometry} on $\Omega$ is a bijective mapping $\phi : \Omega \to \Omega$ that preserves distances, i.e.
\[
    |\phi(x) - \phi(y)| = |x - y| \qquad \text{ for each } x,y \in \Omega.
\]
By triangulating distances from any three non-collinear points in $\Omega$, every isometry $\Omega \to \Omega$ extends uniquely to an isometry $\R^2 \to \R^2$, and hence has the form
\[
    \phi(x) = x_0 + Ux,
\]
where $x_0 \in \R^2$ and $U$ is a $2 \times 2$ orthogonal matrix (i.e. having real entries and satisfying $U^tU = I$). From this, one can show that the Laplacian commutes with isometries in the sense that 
\[
    (\Delta f) \circ \phi = \Delta (f \circ \phi) \qquad \text{ for each } f \in C^2(\Omega).
\]
A short exercise shows that if $\phi : \Omega \to \Omega$ is an isometry, then
\[
    N(\phi(x), \lambda) = N(x,\lambda) \qquad \text{ for each } x \in \Omega, \ \lambda \geq 0.
\]
Our main question essentially asks if the converse holds.

\begin{question} \label{question: echolocation}
    Let $N$ be the local Weyl counting function for a bounded planar domain $\Omega$ with piecewise-smooth boundary. If $x, y \in \Omega$ are two points for which
    \[
        N(x,\lambda) = N(y,\lambda) \qquad \text{ for each } \lambda \geq 0,
    \]
    must there be an isometry $\phi : \Omega \to \Omega$ for which $\phi(x) = y$?
\end{question}

\begin{theorem}\label{thm: main 1}
    The answer to Question \ref{question: echolocation} is `yes' for the equilateral and isosceles right triangles.
\end{theorem}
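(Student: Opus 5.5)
Since $N(x,\cdot)$ is a step function jumping at each distinct eigenvalue $\lambda$ by $\sum_{\lambda_j=\lambda}|e_j(x)|^2$, equality $N(x,\cdot)=N(y,\cdot)$ means that for every eigenvalue the sum of $|e_j|^2$ over an orthonormal basis of the eigenspace agrees at $x$ and $y$. For the equilateral and isosceles right triangles the eigenfunctions are explicit. For the 45-45-90 triangle $T=\{0<x_2<x_1<\pi\}$ they are antisymmetrizations $\sin(mx_1)\sin(nx_2)-\sin(nx_1)\sin(mx_2)$ with $m>n\ge1$. For the equilateral triangle they are Lamé's functions, obtained by antisymmetrizing plane waves over the reflection group of the hexagonal lattice. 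So the plan is to write, for each eigenvalue $\lambda^2=m^2+n^2$ (resp. the hexagonal norm form), the jump as an explicit trigonometric polynomial in $x$. We then choose well-separated families of eigenvalues whose eigenspaces are one-dimensional, or at least whose jumps isolate a single frequency pair. This lets us extract simple equations in the coordinates of the point.

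For the isosceles right triangle, I would take $\lambda^2 = m^2+n^2$ with $m$ a large prime-like choice and $n=1$, such as eigenvalues $p^2+1$ represented essentially uniquely as a sum of two squares. The jump $J_{m,n}(x)$ then expands, using product-to-sum formulas, into a linear combination of $\cos(ax_1\pm bx_2)$ terms. Summing or averaging $J$ over suitable families of $(m,n)$ (a generating-function / Fourier-coefficient trick, akin to the rectangle argument in \cite{echolocation,ugthesis}) should recover quantities like $\cos(2x_1)+\cos(2x_2)$ and $\cos(2x_1)\cos(2x_2)$, or equivalently the symmetric functions of $\sin^2 x_1,\sin^2 x_2$. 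The cleanest route is the following. Since the square's eigenfunctions $\sin(mx_1)\sin(nx_2)$ restricted to antisymmetric combinations give the triangle's, the triangle counting function at $x$ equals a combination of square counting functions at $x$ and at its reflection $x^\ast=(x_2,x_1)$, namely $N_T(x,\lambda)=N_{\square}(x,\lambda)-K_\square(x,x^\ast,\lambda)$, up to a factor. I would then read off the coefficients $\sin^2(mx_1)\sin^2(nx_2)+\sin^2(nx_1)\sin^2(mx_2)-2\sin(mx_1)\sin(nx_2)\sin(nx_1)\sin(mx_2)$ for $n=1$ and varying $m$. Expanding $\sin^2$ in terms of $\cos(2mx_i)$ and treating the result as a function of $m$, the distinct frequencies $2x_1, 2x_2, x_1\pm x_2$ appear with coefficients determined by $x$. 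Linear independence of the exponentials in $m$ (as $m$ ranges over an arithmetic progression avoiding coincident representations) lets us recover the set $\{x_1,x_2\}$ modulo the natural ambiguities $x_i\mapsto \pi-x_i$ and sign. Confining to $T$ then leaves only $x$ itself or its image under the triangle's reflection symmetry $(x_1,x_2)\mapsto(\pi-x_2,\pi-x_1)$.

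For the equilateral triangle the same scheme applies with the hexagonal lattice. Eigenvalues are $\frac{16\pi^2}{27}(m^2+mn+n^2)$-type norm forms, and the jump is a sum over the six-element dihedral orbit of products of exponentials. Restricting to eigenvalues with a single orbit of representations (e.g. $m^2+mn+n^2$ prime with $n$ fixed small), the jump as a function of $m$ is an exponential polynomial whose frequencies are the pairings of $x$ with the three root directions. Independence of these exponentials recovers the three "barycentric" phases of $x$ up to the $S_3$ action and sign, which is exactly the symmetry group of the triangle.

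The main obstacle is the number-theoretic and independence step. We must guarantee infinitely many eigenvalues whose multiplicity comes from a single orbit, and we must handle degenerate points where frequencies coincide (for example $x$ on a symmetry axis, or where $x_1\pm x_2$ coincides with $2x_i$ modulo $2\pi$). I would first try to avoid primality by using the full jump sum and a Fourier/Dirichlet-series averaging in $m$ for fixed $n$: cross terms from other representations contribute frequencies that also appear, but only as known symmetric combinations. I expect a careful case analysis of these coincidences to be the bulk of the work.
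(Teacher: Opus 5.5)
\textbf{There is a genuine gap.} Your plan rests on isolating a single dihedral orbit in infinitely many eigenspaces along a one-parameter family ($\lambda^2=m^2+1$ with $m$ varying, or $m^2+mn+n^2$ with $n$ fixed), and then applying linear independence of exponentials in $m$. That step is not available.

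\emph{Single orbits are not available.} For the orbit of $(m,1)$ to fill the eigenspace, $m^2+1$ must be $p$, $2p$ or $2p^2$ with $p$ prime. Whether this happens for infinitely many $m$ is open (Landau's problem; Bunyakovsky's conjecture in the hexagonal case). No arithmetic progression of $m$ has the property either. Pick primes $p,q\equiv 1 \pmod 4$ not dividing the modulus; the Chinese remainder theorem gives $m$ in the progression with $pq\mid m^2+1$, and then there are at least two nondegenerate orbits. For example, $8^2+1^2=7^2+4^2=65$, so the jump is $F_{8,1}(x)+F_{7,4}(x)$, not $F_{8,1}(x)$. The extra terms carry frequencies such as $14x_1$ and $8x_2$ that are not exponentials in $m$, so your fallback of averaging the full jump sums in $m$ does not restore the structure.

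\emph{Independence fails on a sparse set.} Restricting to the good $m$ leaves a sparse, irregular set. The exponential polynomial $F_x(m)-F_y(m)$ has frequencies that depend on the unknown points, so vanishing on such a set does not force it to vanish identically; the Vandermonde argument needs a run of consecutive integers.

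\emph{The recovered data are too weak.} Even granting the identity, the symmetric functions of $\sin^2x_1,\sin^2x_2$ that you name as the target do not suffice. The points $(2.5,1)$ and $(2.5,\pi-1)$ both lie in $T$, share these quantities, and are not related by $(x_1,x_2)\mapsto(\pi-x_2,\pi-x_1)$. You would need the coefficients at the $x_1\pm x_2$ frequencies. You would also need a case analysis on interior lines where frequencies collide, such as $3x_1+x_2=2\pi$, where $2x_1\equiv-(x_1+x_2)$. You leave both open.

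\textbf{How the paper proceeds.} It needs none of this.

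For the 45-45-90 triangle it uses only four eigenvalues, $n_1^2+n_2^2=5,10,13,20$, each a single orbit. With $a=\cos 2\pi p_1$ and $b=\cos 2\pi p_2$, the squared eigenfunctions are, up to constants, $(1-a^2)(1-b^2)(b-a)^2$ times $1$, $(a+b)^2$, $(4ab+1)^2$ and $a^2b^2(a+b)^2$. On the half $\Omega_0$ cut off by the symmetry axis these force $a+b=c+d$ and $ab=cd$, hence $p=q$. A separate short computation handles the axis (Proposition~\ref{prop: 45-45-90 triangle}).

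For the equilateral triangle the paper uses no eigenfunctions at all. The method-of-images identity of Lemma~\ref{lem: wave identity}, combined with Lemma~\ref{lem: unique transform}, shows that $N(x,\cdot)$ determines the signed looping-length spectrum $\ell(\cdot,x)$. The two shortest loops are the perpendicular bounces, of lengths $2a_1$ and $2a_2$ (Lemma~\ref{lem: altitudes are audible}), and Viviani's theorem then gives $a_3$.

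Your ``full jump sum'' idea, carried out properly, is exactly this Poisson summation. Your 45-45-90 strategy becomes a proof once the infinite family is replaced by a finite explicit list of single-orbit eigenvalues followed by an algebraic solve.
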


The next theorem says the answer to Question \ref{question: echolocation} is `no' for the 30-60-90 triangle. This is the first known topologically connected negative example. Note the 30-60-90 triangle has trivial symmetry group, so it suffices to identify any pair of points at which the local Weyl counting functions are identical.

\begin{theorem}\label{thm: main 2}
    Let $\Omega$ be the 30-60-90 triangle with vertices at $(0,0)$, $(\frac {\sqrt 3} 2, 0)$, and $(\frac {\sqrt 3} 2, \frac 1 2)$ as in Figure \ref{fig: audibly indistinguishable points}.
    Then,
    \[
        N(p, \lambda) = N(q, \lambda) \qquad \text{ for all $\lambda$} 
    \]
    where
    \[
        p = (\tfrac{7\sqrt 3} {16}, \tfrac 3 {16} ) \qquad \text{ and } \qquad q = (\tfrac{5\sqrt 3} {16}, \tfrac 3 {16} ).
    \]
    %Furthermore, $(p,q)$ is the only audibly indistinguishable pair of points in $\Omega$.
\end{theorem}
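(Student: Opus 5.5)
The plan is to make the spectral data completely explicit with the method of images, and then reduce the identity $N(p,\lambda)=N(q,\lambda)$ to a finite combinatorial identity on a lattice. Reflecting $\Omega$ across the $x$-axis gives the equilateral triangle $T$ with vertices $(0,0)$ and $(\frac{\sqrt3}2,\pm\frac12)$. The reflections in the sides of $\Omega$ generate an affine reflection group $W$ of type $G_2$. Write $W=\Lambda\rtimes G$, where $G$ is the dihedral group of order $12$ fixing the origin and $\Lambda$ is the translation lattice, a hexagonal lattice. Let $\Lambda^*$ be the dual lattice, and let $\sgn(w)=\det$ of the linear part of $w$.

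The Dirichlet eigenfunctions of $\Omega$ are exactly the $W$-antisymmetric trigonometric polynomials built from $e^{i\xi\cdot x}$ with $\xi\in 2\pi\Lambda^*$. Equivalently, the spectral projector onto the eigenvalue $-\mu^2$ is the antisymmetrisation of the torus projector. Concretely, for $x\in\Omega$,
\[
 N(x,\lambda)=c\sum_{\substack{\xi\in 2\pi\Lambda^*\\ |\xi|\le\lambda}}\ \sum_{w\in W/\Lambda}\sgn(w)\,e^{i\xi\cdot(x-wx)},
\]
with $c$ a constant depending only on the area of $\Lambda$. Two points to note:
\begin{itemize}
\item The sum is well defined because $\xi\cdot\Lambda\subset 2\pi\Z$.
\item The terms with $\xi$ on a $G$-orbit containing a wall-fixed vector cancel automatically.
\end{itemize}
This formula can also be checked through the heat kernel $K(t,x,x)=\sum_{w\in W}\sgn(w)(4\pi t)^{-1}e^{-|x-wx|^2/4t}$ followed by Poisson summation over $\Lambda$. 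By injectivity of the Laplace–Stieltjes transform, equality of $N(\cdot,\lambda)$ for all $\lambda$ is then equivalent to
\[
 S_x(r):=\sum_{\substack{\xi\in 2\pi\Lambda^*\\ |\xi|^2=r}}\ \sum_{g\in G}\sgn(g)\,e^{i\xi\cdot(x-gx)}\qquad\text{satisfying}\qquad S_p(r)=S_q(r)\ \text{for every } r.
\]
Here each $g\in G$ is realised by a representative in $W$, so $gx$ includes the appropriate translation.

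Next I compute the twelve vectors $v_g(x)=x-gx$ for $x=p$ and $x=q$. For reflections these are $2\,\mathrm{dist}(x,\text{mirror})$ times the unit normal. For rotations they are $(I-R)x$ minus a lattice translation. Because $p$ and $q$ have coordinates in $\frac1{16}(\sqrt3\Z\times\Z)$, all $v_g$ lie in $\frac18\Lambda$ up to a fixed scaling. Hence $e^{i\xi\cdot v_g}$ depends only on the class of $\xi$ in $\Lambda^*/8\Lambda^*$ (or $16\Lambda^*$). This makes $S_x(r)$ a finite character sum over a quotient group, weighted by how many lattice vectors of norm $r$ lie in each residue class.

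The key step is to show that the two signed sums agree on every norm shell. My first attempt is to look for an isometry $A$ of $\Lambda^*$, possibly composed with a map permuting residue classes within each norm shell (e.g.\ multiplication by a unit, or a lattice automorphism such as $\xi\mapsto$ a rotation by $60^\circ$ or complex conjugation in the Eisenstein-integer model), together with a sign-preserving bijection $\sigma$ of $G$. The requirement is
\[
 \xi\cdot v_g(p)\equiv A\xi\cdot v_{\sigma(g)}(q)\pmod{2\pi}.
\]
If no single such $A$ works, I will pass to the Eisenstein model $\Lambda^*\cong\Z[\omega]$. There I group the terms of $S_x(r)$ into $G$-orbits, write each orbit sum as an antisymmetrised character $\chi_x(\xi)=\sum_g\sgn(g)e^{i\xi\cdot v_g(x)}$, and show that $\chi_p-\chi_q$ is, as a function on $\Z[\omega]/16$, orthogonal to the indicator of each norm class $\{\xi:N(\xi)\equiv r\}$ intersected with the residue distribution of norm-$r$ elements. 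This part is a finite computation. The equidistribution of norm-$r$ Eisenstein integers mod $16$ among the classes determined by their norm and unit orbit is what makes it reduce to one.

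I expect the main obstacle to be exactly this step: locating the hidden correspondence between the image configurations of $p$ and $q$. Since $\Omega$ has no symmetry, the identity cannot come from a geometric isometry of $\Omega$. It must come from an arithmetic symmetry of the Eisenstein lattice acting on residues mod $16$. I would try first to find it by brute-force tabulation of $\chi_p(\xi)$ and $\chi_q(\xi)$ over $\Z[\omega]/16$, and then read off the pattern.
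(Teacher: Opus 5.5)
\textbf{Verdict: there is a genuine gap.} Your reduction is sound, and it is essentially the paper's starting point. Your $S_x(r)$ is a constant multiple of $\sum_{|2\pi\xi|^2=r}|\varphi_\xi(x)|^2$, with $\varphi_\xi$ as in Lemma~\ref{lem: automorphic basis}. Since $8p,8q\in\Lambda$, everything depends only on $\xi \bmod 8\widehat\Lambda$.

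But the theorem \emph{is} the identity $S_p(r)=S_q(r)$, and you leave it as a search to be done by tabulation. As written there is no proof. Moreover, neither route you sketch works as stated.

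\emph{(i) A lattice-isometry correspondence cannot exist.}
\begin{enumerate}
\item The norm-preserving linear automorphisms of $\widehat\Lambda$ are exactly the elements of $G$.
\item So your termwise congruence would force $p-gp\equiv q'-g'q' \pmod{\Lambda}$, for one fixed $q'\in Gq$ and a sign-preserving bijection $g\mapsto g'$.
\item For both points, the half-turn is the only rotation whose displacement has order $4$ in $\R^2/\Lambda$. So you would need $2p\equiv 2q'\pmod{\Lambda}$.
\item Identify $\R^2$ with $\mathbb{C}$, so that $\Lambda=\sqrt3\,\Z[\omega]$, $p=\sqrt3(4+\omega)/8$ and $q=\sqrt3(3+\omega)/8$. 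One checks that $2p\equiv 2q'$ fails for all twelve $q'\in Gq$.
\end{enumerate}

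\emph{(ii) Individual shells are not equidistributed modulo $16$.} The ground-state shell $n_1^2+n_1n_2+n_2^2=7$ is a single $G$-orbit of only $12$ vectors. So if $p$ and $q$ agreed only ``on average'' over residue classes, you could conclude nothing shell by shell.

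\textbf{The missing idea.} No shell arithmetic is needed, because the identity holds eigenfunction by eigenfunction: $\varphi_\xi(p)=\pm\varphi_\xi(q)$ for every $\xi\in\widehat\Lambda$. The paper proves this from the explicit eigenfunctions of Lemma~\ref{lem: 30-60-90 eigenfunction}.
\begin{enumerate}
\item With $z=e^{i\pi u/8}$ and $w=e^{i\pi v/8}$, it writes $\varphi_\xi(p)-\varphi_\xi(q)=G_0M/(4z^7w^4)$ and $\varphi_\xi(p)+\varphi_\xi(q)=G_0H/(4z^7w^4)$.
\item It shows that $G_0MH$ lies in the ideal generated by $z^8-w^8$ and $z^{16}-1$.
\item Both generators vanish at these roots of unity, since $u\equiv v\pmod 2$.
\end{enumerate}
Consequently $\varphi_\xi(p)=-\varphi_\xi(q)$ when $H=0$, that is, when one of $n_1$, $n_2$, $n_1+n_2$ is $\equiv 4\pmod 8$. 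Otherwise $\varphi_\xi(p)=\varphi_\xi(q)$. The sign genuinely varies with $\xi$.

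\textbf{How to close the gap in your framework.} Every shell's residue-count function is $G$-invariant. So it suffices to show that the $G$-orbit sums $\sum_{h\in G}\chi_x(h\xi)$ agree for $x=p$ and $x=q$ on each of the $64$ classes of $\widehat\Lambda/8\widehat\Lambda$. These orbit sums are proportional to $|\varphi_\xi(x)|^2$. Carrying out that finite check, by table or via the paper's factorization, would complete your argument. The equidistribution heuristic should be dropped.
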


\begin{figure}
        \begin{tikzpicture}[scale=6]
          % vertices of the 30-60-90 triangle
          \coordinate (A) at (0,0);
          \coordinate (B) at ({sqrt(3)/2}, 0);
          \coordinate (C) at ({sqrt(3)/2}, 1/2);
        
          % the triangle
          \draw[thick] (A) -- (B) -- (C) -- cycle;
        
          % the two distinguished points
          \fill ({5*sqrt(3)/16}, {3/16}) circle (0.4pt);
          \fill ({7*sqrt(3)/16}, {3/16}) circle (0.4pt);
        \end{tikzpicture}
    \caption{The 30-60-90 triangle and the pair of audibly indistinguishable points of Theorem \ref{thm: main 2}. These points are not related by a symmetry.}
    \label{fig: audibly indistinguishable points}
\end{figure}

Each of these three model triangles share a common property: they ``unfold" along their edges to periodically tile the plane in a nice way. This property is what admits direct calculation of their Dirichlet-Laplace eigenfunctions, and also what allows us to cleanly characterize the local Weyl counting function $N(x, \cdot )$ in terms of billiard trajectories starting and returning to $x$. We use both perspectives in the proofs of the theorems.

\subsection{Structure of the paper} Section \ref{sec: nice domains} characterizes and classifies the planar domains with the nice ``unfolding" property. Section \ref{sec: billiards} establishes the audibility of certain billiard trajectories in such domains, and Section \ref{sec: Fourier series} builds the tools needed to write down their explicit Laplace eigenbases with help from the review of multidimensional Fourier series in Appendix \ref{app: lattices and fourier series}. Sections \ref{sec: proof of main theorem} and \ref{sec: proof of surprising theorem} prove Theorems \ref{thm: main 1} and\ref{thm: main 2}, respectively, with detailed computations relegated to Appendix \ref{sec: trig computations}.

\subsection{Acknowledgments} The authors are grateful to Yakun Xi for helpful conversations and for his insights into the problem. % and for pointing out that the pair of audibly indistinguishable points in the 30-60-90 triangle is unique.

\section{Domains that tile the plane evenly} \label{sec: nice domains}

We begin by making our unfolding definition precise.

\begin{definition}\label{def: tile evenly} A polygonal domain $\Omega$ in $\R^2$ is said to tile the plane \emph{evenly} if:
\begin{enumerate}
    \item $\Omega$ and the regions obtained by successive reflections over its sides create a periodic tiling of $\R^2$, and
    \item every vertex of the tiling is incident to an even number of tiles.
\end{enumerate}
\end{definition}

There are only a few such connected domains.

\begin{proposition}\label{prop: only a few tilings}
    The only connected domains that tile the plane evenly are rectangles, the $45$-$45$-$90$ triangle, the $30$-$60$-$90$ triangle, and the equilateral triangle.
\end{proposition}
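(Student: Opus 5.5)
The plan is to show that $\Omega$ must be a convex polygon all of whose interior angles have the form $\pi/k$ for integers $k \geq 2$. After that, an angle-sum count leaves only the four listed shapes. Finally, each of those four is checked directly against Definition \ref{def: tile evenly}.

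\textbf{Angle constraint at a vertex.} Let $v$ be a vertex of $\Omega$ with interior angle $\theta$. The tiles that meet at $v$ are the images of $\Omega$ under successive reflections across the two sides through $v$. Going around $v$, each tile is the reflection of the previous one across a shared edge, and each contributes a wedge of angle $\theta$ at $v$. The tiling condition (1) says these wedges close up around $v$ without overlap, so $m\theta = 2\pi$ for some integer $m$. Condition (2) says $m$ is even, say $m=2k$, hence $\theta = \pi/k$.

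There is one subtlety here. A vertex of $\Omega$ could lie on an edge of a neighbouring tile, as a T-junction, or a tile could meet $v$ at a corner with a different angle. I would rule this out by a parity argument. The reflections across the two sides at $v$ generate a dihedral group, and the tiles around $v$ are exactly its images of $\Omega$. Closing up consistently forces every wedge at $v$ to be a copy of the same corner of $\Omega$, so every tile at $v$ has $v$ as a corner. I expect this to be the main obstacle: making precise that condition (2), together with the reflection structure, forces each vertex of the tiling to be a corner of every incident tile.

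\textbf{Finishing.} Since every angle satisfies $\pi/k \le \pi/2 < \pi$, the polygon is convex. It is also simply connected, because $\Omega$ is connected and the reflected copies tile without overlap. For a convex $n$-gon with angles $\pi/k_i$, we need $\sum_i 1/k_i = n-2$ with each $1/k_i \le 1/2$, so $n-2 \le n/2$ and hence $n \le 4$.

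For $n=4$, all four angles must be $\pi/2$, which gives a rectangle. For $n=3$, we need $1/k_1+1/k_2+1/k_3=1$, whose solutions are $(3,3,3)$, $(2,4,4)$ and $(2,3,6)$. These give the equilateral, $45$-$45$-$90$ and $30$-$60$-$90$ triangles.

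Conversely, for each of these shapes the reflection group is a crystallographic group: $pmm$, $p3m1$, $p4m$ and $p6m$ respectively. So the reflected copies tile $\R^2$ periodically. Each tiling vertex is a corner of angle $\pi/k$ surrounded by $2k$ tiles, which is even.
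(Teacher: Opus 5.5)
Your proposal is correct and follows essentially the same route as the paper: force every interior angle to be $\pi/k$, then bound the number of sides by $4$ via the angle sum, then solve $\frac{1}{k_1}+\frac{1}{k_2}+\frac{1}{k_3}=1$ for triangles; you make the use of evenness in condition (2) explicit, where the paper leaves it implicit. The T-junction worry you flag is already settled by your own observation that every tile at $v$ is a $\langle\rho_1,\rho_2\rangle$-image of $\Omega$ with a $\theta$-corner there; the same angle bound, applied at the (reflex) vertices of a putative hole, is also what actually gives simple connectivity, and your wallpaper-group check of the converse goes beyond what the paper's proof does.
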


\begin{proof}

\par We present a geometric argument to determine which polygons can tile the plane evenly by reflection. Suppose $\Omega$ is a connected domain that tiles the plane evenly. Then $\Omega$ must be a polygon whose interior angles allow for full rotation around each vertex. So each interior angle must divide $2\pi$ evenly.

\par First, consider the case where $\Omega$ is a quadrilateral. For a polygon to tile evenly by reflections, no interior angle can exceed $\frac{\pi}{2}$, since angles larger than $\frac{\pi}{2}$ would prevent multiple copies from fitting around a point without overlap. A general polygon with $n > 4$ sides has an interior angle sum of $(n - 2)\pi \geq 3\pi$, so at least one of its angles must be greater than or equal to $\frac{3\pi}{5} > \frac{\pi}{2}$. Hence, the maximum number of sides a tile may have in this context is 4. Furthermore, for a quadrilateral to tile evenly, it must be a rectangle. If any angle were smaller than $\frac{\pi}{2}$, another angle would have to be larger, a contradiction.

\par Now consider the case where $\Omega$ is a triangle, with interior angles $\alpha$, $\beta$, and $\gamma$. For such a triangle to tile the plane evenly by reflection, some number of copies of the triangle must fit around each vertex. That is, there must exist positive integers $m, n, p \in \mathbb{N}$ such that:
\begin{equation*}
    m\alpha = n\beta = p\gamma = \pi \quad \text{and} \quad \alpha + \beta + \gamma = \pi.
\end{equation*}
Combining these gives the condition:
\begin{equation*}
    \frac{1}{m} + \frac{1}{n} + \frac{1}{p} = 1.
\end{equation*}
Without loss of generality, assume $m \leq n \leq p$. Solving this equation yields exactly three integer solutions:
\begin{enumerate}
    \item $m = 2$, $n = 3$, $p = 6$,
    \item $m = 2$, $n = 4$, $p = 4$,
    \item $m = 3$, $n = 3$, $p = 3$.
\end{enumerate}
These correspond to the following triangles, respectively:
\begin{enumerate}
    \item The 30-60-90 triangle,
    \item The isosceles right triangle (45-45-90),
    \item The equilateral triangle.
\end{enumerate}
Thus, these are the only triangular tiles that can evenly tile the plane via reflection.
\end{proof}

Domains satisfying Definition \ref{def: tile evenly} satisfy some favorable algebraic properties.

\begin{proposition}\label{prop: characterization of evenly tiling domains}
    A compact domain $\Omega$ tiles the plane evenly by reflections over its sides if and only if $\Omega$ is a fundamental domain of the action of the group $R$ generated by reflections over the sides of $\Omega$.
\end{proposition}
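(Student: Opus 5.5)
We will prove both directions using the standard fact about reflection groups: if $\Omega$ is a fundamental domain for the group $R$ generated by the reflections $s_1,\dots,s_k$ over its sides, then the translates $\{g\Omega : g\in R\}$ tile $\R^2$, and adjacent tiles differ by a single reflection. Throughout, for $g\in R$ we write $\det g=\pm1$ for the determinant of its linear part, so that $\det s_i=-1$. This gives a homomorphism $\det:R\to\{\pm1\}$, and $g\Omega$ is orientation-reversed exactly when $\det g=-1$.

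For ($\Leftarrow$), suppose $\Omega$ is a fundamental domain of $R$. The tiles $g\Omega$, $g\in R$, cover the plane with disjoint interiors. Every tile is reached from $\Omega$ by a word in the $s_i$, so the tiling is the one obtained by successive reflections over sides. Periodicity should come from the translation subgroup. Since $\Omega$ is compact, the tiling is locally finite, so $R$ is a discrete group of isometries. Its tiles are compact, so $R$ acts cocompactly, and by Bieberbach $R$ contains a lattice of translations of finite index. For evenness, fix a vertex $v$. The tiles around $v$ are cyclically ordered, and consecutive tiles are related by a reflection across a common edge through $v$. Going around, two tiles sharing an edge have opposite orientation, since $g' = g\,s_i$ for some $i$ and so $\det g'=-\det g$. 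Because the cycle closes up, the number of tiles around $v$ must be even.

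For ($\Rightarrow$), suppose the reflection tiling exists and is even. Every tile has the form $g\Omega$ for some $g\in R$, and the tiles cover $\R^2$ with disjoint interiors. It remains to show that $g\Omega^\circ\cap\Omega^\circ\neq\emptyset$ forces $g=\mathrm{id}$. Equivalently, the map $R\to\{\text{tiles}\}$, $g\mapsto g\Omega$, must be injective; a priori a nontrivial $g$ could map $\Omega$ to itself or to another tile with a different labeling. To handle this, form the adjacency graph $\Gamma$ of tiles, with an edge whenever two tiles share a side. Each tile carries a labeling of its sides inherited from $\Omega$. Reflecting across side $i$ of $g\Omega$ gives the tile $g s_i\Omega$. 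We want this labeling to be well defined across shared edges, meaning a shared side carries the same label $i$ from both sides. Given that, a word $s_{i_1}\cdots s_{i_n}$ corresponds to a path in $\Gamma$, and $g$ is determined by the tile it reaches, provided paths are consistent. Consistency reduces to loops in $\Gamma$. Since $\R^2$ is simply connected, every loop in $\Gamma$ is a product of small loops around vertices. Around a vertex $v$ where sides $i$ and $j$ meet at angle $\pi/m$, there are $2m$ tiles by evenness, and the labels alternate $i,j,i,j,\dots$. So the loop gives the word $(s_is_j)^m$, which equals the identity because $s_is_j$ is rotation by $2\pi/m$. Hence the assignment tile $\mapsto g$ is well defined, and $R$ acts simply transitively on tiles, so $\Omega$ is a fundamental domain.

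The main obstacle is this last consistency step, namely that labels match along shared edges and alternate around vertices. This is exactly where evenness enters. With an odd number of tiles around a vertex, as for a regular pentagon-type angle $2\pi/3$ at a vertex of degree $3$, the alternation fails and a reflection would send $\Omega$ to a tile with a mismatched labeling. I would prove alternation by induction around the vertex. Reflecting the tile across side $i$ swaps which of its two sides at $v$ is next, and after $2m$ steps one returns consistently precisely because the count is even. The passage from local vertex loops to all loops, via simple connectivity of the plane, I would state as a standard covering argument.
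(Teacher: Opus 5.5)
Your proposal is correct in outline, but for the direction ``tiles evenly $\Rightarrow$ fundamental domain'' it takes a genuinely different route from the paper. The paper first uses Proposition~\ref{prop: only a few tilings} to reduce to rectangles and the three special triangles. It then argues shape by shape, by inspecting the tilings, that no internal symmetry of $\Omega$ belongs to $R$.

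You instead prove directly that the stabilizer of $\Omega$ in $R$ is trivial, by a monodromy argument on the dual graph. Evenness forces every angle of $\Omega$ to be $\pi/m$, so $(s_is_j)^m=1$ and the group element transported around any vertex returns unchanged. Simple connectivity of the plane then propagates this to every loop. This is the standard Poincar\'e--Coxeter argument. It needs no classification, it works for any compact polygon satisfying the hypotheses, and it shows exactly where evenness enters. The paper's route is more concrete but relies on the pictures.

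To make your version airtight you need two things:
\begin{enumerate}
\item Set up the labels non-circularly. Transport a group element along paths, with $h\mapsto hs_i$ when crossing the side $h(\text{side } i)$ of the current tile. Do not assume each tile carries an inherited labeling, since that presupposes the uniqueness you are proving.
\item Justify that every loop in $\Gamma$ is a product of conjugates of small vertex loops. One way: $\R^2$ minus the tiling vertices deformation retracts onto the embedded dual graph, and filling in the punctures kills exactly those loops.
\end{enumerate}

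For the converse direction, your orientation-parity argument for evenness is the same as the paper's. Your periodicity argument (discrete and cocompact, then Bieberbach) gets the translation lattice from the group itself. The paper instead appeals to a classification of edge-to-edge tilings. Yours is the more robust choice, since periodicity really comes from the group rather than from the edge-to-edge property.

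One minor slip: the odd-vertex example with angle $2\pi/3$ and three tiles per vertex is the regular hexagon, not a pentagon.
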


\begin{proof}
Suppose $\Omega$ is a fundamental domain for the action of the group $R$ generated by reflections over the sides of $\Omega$. Since $\Omega$ is a fundamental domain, the orbits of $R$ acting on $\Omega$ cover all of $\mathbb{R}^2$. That is, the union of the images of $\Omega$ under $R$ yields a tiling of the plane.

Suppose two tiles overlapped. Then two distinct elements of $R$ would map $\Omega$ to regions that intersect, meaning the corresponding orbits intersect, which contradicts the definition of a fundamental domain (where each orbit meets $\Omega$ in exactly one point). Therefore, the tiles are disjoint except along their boundaries.

Furthermore, since the group $R$ is generated by reflections over the sides of $\Omega$, the tiling is edge-to-edge: adjacent tiles share full sides, not just parts of them. Such edge-to-edge tilings of the plane are classified (see \cite{KirbyUmble}), and all are periodic. Thus, the tiling obtained is periodic.

Now suppose there exists a vertex in the tiling that is incident to an odd number of tiles. Without loss of generality, pick one such tile and reflect it sequentially around the sides that meet at that vertex until the tile maps back onto itself. This sequence of reflections involves an odd number of reflections, which results in an orientation-reversing transformation.

However, this contradicts the fact that all group elements mapping $\Omega$ to tiles must preserve the uniqueness of orbits: an orientation-reversing transformation would map some interior point of $\Omega$ to another interior point of $\Omega$, implying two distinct group elements map $\Omega$ to overlapping regions. This again contradicts the fact that $\Omega$ is a fundamental domain. 

Therefore, no vertex can be incident to an odd number of tiles.

\begin{figure}[h]
    \centering
    \begin{tikzpicture}[scale=1.5]
        % Draw square grid
        \foreach \i in {0,...,3}{
            \draw[thick] (\i, 3) -- (\i, 0);
            \draw[thick] (0, \i) -- (3, \i);
        }

        % Draw inner lines
        \draw[thick] (1,0) -- (3,2) -- (2,3) -- (0,1) -- cycle;
        \draw[thick] (0,3) -- (3,0);

        % Draw highlighted region
        \draw[red, very thick,fill=red!5] (1,1) --  (2,1) -- (1,2) -- cycle;
        \draw[red, very thick, dotted] (1,1) -- (1.5, 1.5);
    \end{tikzpicture}
    \caption{Showcasing 45-45-90 triangle tiling and internal symmetries in red}
    \label{fig:lemma 2.1.1}
\end{figure}

\begin{figure}[h]
    \centering
    \begin{tikzpicture}[scale=1]
        % Draw inner hexagon
	    \foreach \i in {0,...,2}{
	        \draw[thick] ({cos(60 * \i)}, {sin(60 * \i)}) -- ({cos(60 * (\i + 3))}, {sin(60 * (\i + 3))});
	    }
	            
	    % Loop over each outer portion of hexagon
	    \foreach \t in {0,...,5}{
	    
	        % Define offset via circle parameterization, distance = sqrt2/2
	        \pgfmathsetmacro{\xshift}{1.732 * cos(60 * \t + 30)}
	        \pgfmathsetmacro{\yshift}{1.732 * sin(60 * \t + 30)}
	    
	            % Draw outer hexagons
	            \begin{scope}[shift=({\xshift,\yshift})]
	                % hexagon border
	                \draw[thick] \foreach \i in {0,...,6}
	                { -- ({cos(60 * \i)},{sin(60 * \i)})};
	
	                % inner lines
	                \foreach \i in {0,...,2}{
	                \draw[thick] ({cos(60 * \i)}, {sin(60 * \i)}) -- ({cos(60 * (\i + 3))}, {sin(60 * (\i + 3))});
	                }
	            \end{scope}
	    }
	
	    % Draw highlighted region
	    \draw[red, very thick,fill=red!5] (0,0) -- ({cos(240)},{sin(240)}) -- ({cos(-60)},{sin(-60)}) -- cycle;
	    \draw[red, very thick, dotted] (-0.25, -0.43301) -- (0.5, -0.86603);
	    \draw[red, very thick, dotted] (0.25, -0.43301) -- (-0.5, -0.86603);
	    \draw[red, very thick, dotted] (0,0) -- (0, -0.86603);
    \end{tikzpicture}
    \caption{Showcasing equilateral triangle tiling and internal symmetries in red}
    \label{fig:lemma 2.1.2}
\end{figure}

\begin{figure}[h]
    \centering
    \begin{tikzpicture}[scale=0.5]
        % Draw square grid
        \foreach \i in {0,...,3}{
            \draw[thick] ({4 * \i}, 9) -- ({4 * \i}, 0);
            \draw[thick] (0, {3 * \i}) -- (12, {3 * \i});
        }

        % Draw highlighted region
        \draw[red, very thick,fill=red!5] (4,3) --  (8,3) -- (8,6) -- (4,6) -- cycle;
        \draw[red, very thick, dotted] (6,3) -- (6,6);
        \draw[red, very thick, dotted] (4,4.5) -- (8,4.5);
    \end{tikzpicture}
    \caption{Showcasing rectangle tiling and internal symmetries in red}
    \label{fig:lemma 2.1.3}
\end{figure}

Now suppose $\Omega$ is a compact domain that tiles the plane evenly by reflections in its sides. Let $R$ denote the group generated by these reflections. We aim to show that $\Omega$ is a fundamental domain for the action of $R$ on $\mathbb{R}^2$.

By definition (see Definition~\ref{def: tile evenly}), the action of $R$ on $\Omega$ generates a tiling of the plane. That is, every point in $\mathbb{R}^2$ lies in the image of $\Omega$ under some element of $R$, so each orbit intersects $\Omega$ in at least one point.

Suppose, for contradiction, that $\Omega$ is not a fundamental domain for $R$. Then there exist distinct group elements $g_1, g_2 \in R$ and a point $p \in \Omega$ such that $g_1(p) = g_2(p)$, i.e., the orbit of $p$ intersects $\Omega$ more than once. Equivalently, some point of $\mathbb{R}^2$ lies in the image of two distinct tiles, contradicting the tiling being non-overlapping, unless $g := g_2^{-1}g_1$ is a non-identity group element that maps $\Omega$ onto itself. That is, $\Omega$ admits a nontrivial symmetry from within $R$.

We now analyze whether such an internal symmetry is possible. From Proposition~\ref{prop: only a few tilings}, the only possible compact domains that tile the plane evenly by reflection in their sides are:
rectangles, equilateral triangles, isosceles right triangles (45-45-90), and 30-60-90 triangles.

\textbf{Case 1:} If $\Omega$ is a 30-60-90 triangle, then it has no internal reflectional symmetry. Thus, no nontrivial element of $R$ maps $\Omega$ onto itself. Therefore, $\Omega$ must be a fundamental domain.

\textbf{Case 2:} If $\Omega$ is a rectangle, equilateral triangle, or isosceles right triangle, then it does admit internal symmetries (e.g., reflection along diagonals or axes). However, we claim that no such internal symmetry arises from the group $R$ unless it corresponds to a reflection in one of the sides of $\Omega$.

To see this, consider the tilings generated by each of these shapes, as shown in Figures 1–3. In each case, we examine a single tile and its immediate neighbors. Although these shapes may have internal symmetries (such as reflections across diagonals or midlines), these symmetries do not coincide with the sides used in the tiling process, nor are they preserved across adjacent tiles.

Since the tilings are periodic, any symmetry of a tile arising from the reflection group $R$ would appear consistently throughout the tiling. However, the figures clearly show that these internal symmetries are not replicated in other tiles, meaning they are not part of $R$. Therefore, no nontrivial element of $R$ maps $\Omega$ onto itself.

Therefore, in all cases, $\Omega$ does not overlap with its images under distinct elements of $R$, and hence $\Omega$ is a fundamental domain for the action of $R$.
\end{proof}

\section{Looping Billiard Trajectories} \label{sec: billiards}

Next, we extract information about the billiard trajectories that start and end at a point $x$ from the local Weyl counting function at $x$. We will use the wave equation to bridge these two concepts. We start with a review of some facts about the wave equation in the plane. For references, see Evans' standard text on PDEs \cite{evansPartialDifferentialEquations2022}.

\subsection{Some facts about the wave equation for planar domains}

Let $f$ be a $C^2$ function on $\R^2$. We wish to study the solution $u(t,x)$, as a function of $(t,x) \in \R \times \R^2$, to the wave equation
\begin{equation} \label{eq: wave equation}
    \frac{\partial^2 u}{\partial t^2} - \Delta u = 0 \qquad \text{ with } \qquad \begin{cases}
        u(0,x) = 0,  \\
        \frac{\partial u}{\partial t}(0,x) = f(x).
    \end{cases}
\end{equation}
The solution $u$ can be conveniently expressed as a convolution 
\begin{equation}\label{eq: convolution with fundamental solution}
    u(t,x) = E_t * f(x)
\end{equation}
where, for $t > 0$, $E_t$ is the fundamental solution
\begin{equation} \label{eq: fundamental solution}
    E_t(x) = \begin{cases}
        \frac 1 {2\pi} (t^2 - |x|^2)^{- \frac 1 2} & \text{ if $|x| \leq t$}, \\
        0 & \text{ otherwise.}
    \end{cases}
\end{equation}
Note, $E_t$ is an $L^p$ function for each $1 \leq p < 2$, and hence the convolution above is pointwise-defined provided $f \in L^q$ for some $2 < q \leq \infty$. From this, we can deduce a number of facts:
\begin{enumerate}
    \item (Linearity) The solution $u$ depends linearly on $f$.
    \item (Uniqueness)  If $u(t,x) = 0$ for all $t > 0$ and $x \in \R^2$, then $f = 0$.
    \item (Huygens' Principle) If $f = 0$ on a ball of radius $t > 0$ centered at $x$, then $u(t,x) = 0$.
\end{enumerate}

Now we consider a function $f$ on a compact domain $\Omega \subset \R^2$ with piecewise-smooth boundary $\partial \Omega$. The wave equation is phrased very similarly as in the whole plane, but with an added \emph{boundary condition}. That is, we say $u$ solves the wave equation on $\Omega$ with Dirichlet boundary conditions if:
\begin{equation} \label{eq: wave equation 2}
    \frac{\partial^2 u}{\partial t^2} - \Delta u = 0, \quad \begin{cases}
        u(0,x) = 0,  \\
        \frac{\partial u}{\partial t}(0,x) = f(x),
    \end{cases}
    \quad \text{ and } \quad u(t,x) = 0 \text{ if } x \in \partial \Omega.
\end{equation}
Unfortunately, it is harder to find a fundamental solution for the wave equation. However, linearity still holds, and other methods (e.g. energy methods) can be employed to prove uniqueness and Huygens' principle.

\subsection{Extracting billiard trajectories from the local counting function} 

In what follows, $\Omega$ will be a bounded open polygonal domain that tiles the plane evenly by reflections over its sides. Given an isometry $g$ of the plane, $\sgn(g)$ is equal to the determinant of its linear part, i.e. $\sgn(g) = 1$ if $g$ is the identity, a translation, or a rotation, and $\sgn(g) = -1$ if $g$ is a reflection. The next lemma is a Poisson summation formula of sorts.

\begin{lemma}\label{lem: wave identity}
    Let $\Omega$ and $R$ be as in Proposition \ref{prop: characterization of evenly tiling domains}, and let $\{e_j : j \in \N\}$ be a Hilbert basis of Laplace-eigenfunctions for $L^2(\Omega)$ with Dirichlet boundary conditions. Then,
    \[
        \sum_{g \in R} \sgn(g) E_t(g \cdot x - x) = \sum_{j \in \N} \frac{\sin(t \lambda_j)}{\lambda_j} |e_j(x)|^2
    \]
    for all $x$ in the interior of $\Omega$.
\end{lemma}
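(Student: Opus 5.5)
Both sides are the same object written in two ways: the Dirichlet wave kernel $K(t,x,y)$ on $\Omega$ for the problem \eqref{eq: wave equation 2}, evaluated on the diagonal $y = x$. The right-hand side is the spectral expansion of $K$. The left-hand side comes from the method of images, made legitimate by Proposition \ref{prop: characterization of evenly tiling domains}. I would compute $K(t,x,y)$ both ways for $x \ne y$ and then let $y \to x$.

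\textbf{Step 1 (spectral side).} For $f \in C_c^\infty(\interior \Omega)$, expand $f = \sum_j \langle f, e_j\rangle e_j$. The solution of \eqref{eq: wave equation 2} is then
\[
u(t,x) = \sum_j \frac{\sin(t\lambda_j)}{\lambda_j}\langle f, e_j\rangle e_j(x).
\]
Each term satisfies the wave equation, the initial conditions and the boundary condition, and uniqueness gives equality. So $K(t,x,y) = \sum_j \lambda_j^{-1}\sin(t\lambda_j) e_j(x)e_j(y)$, at least in the distributional sense.

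\textbf{Step 2 (image side).} Extend $f$ to an \emph{odd} function $\tilde f$ on $\R^2$ by setting $\tilde f(g\cdot y) = \sgn(g) f(y)$ for $g \in R$ and $y \in \Omega$. This is well defined because $\Omega$ is a fundamental domain (Proposition \ref{prop: characterization of evenly tiling domains}). The even-vertex condition is what keeps the sign assignment consistent around each vertex. Let $\tilde u = E_t * \tilde f$ be the whole-plane solution. Since $E_t$ is radial, $E_t * (\tilde f \circ h) = (E_t * \tilde f)\circ h$ for every isometry $h$. For a side reflection $s$ we have $\tilde f \circ s = -\tilde f$, hence $\tilde u \circ s = -\tilde u$, so $\tilde u$ vanishes on the line fixed by $s$, and in particular on $\partial\Omega$. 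Restricting to $\Omega$ therefore gives the Dirichlet solution, by uniqueness. Unfolding the convolution over the tiles gives
\[
u(t,x) = \int_\Omega \sum_{g\in R}\sgn(g)E_t(g\cdot y - x) f(y)\,dy.
\]
The sum is finite for fixed $t$, because only finitely many tiles meet the ball of radius $t$ about $x$. This identifies $K(t,x,y) = \sum_g \sgn(g)E_t(g\cdot y - x)$.

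\textbf{Step 3 (restricting to the diagonal).} This is the main obstacle. Both the image kernel (through the $g = e$ term, which equals $\tfrac1{2\pi t}$ at $y = x$ but is singular near $|g\cdot y - x| = t$) and the eigenfunction series are only distributions in $t$. The series $\sum_j \lambda_j^{-1}|e_j(x)|^2$ does not converge absolutely, since Weyl's law gives $N(x,\lambda)\sim c\lambda^2$. I would therefore read the identity as an equality of distributions in $t$, or of Abel/Cesàro sums. This is all that is needed later, because one only pairs with test functions in $t$ to detect singularities at the loop lengths $|g\cdot x - x|$.

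Concretely, I would pair both sides with $\rho \in C_c^\infty((0,\infty))$ in $t$. The $t$-smoothed image kernel is continuous in $(x,y)$ away from the finitely many $y$ with $|g\cdot y - x| \in \operatorname{supp}\rho$. The smoothed series converges absolutely, since $\hat\rho$ decays rapidly and local Weyl bounds hold. Both sides are then continuous near $y = x$, so setting $y = x$ yields the lemma in the distributional sense.

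A pointwise version at a fixed $t$ that is not a loop length would follow by Abel summation from the same argument. Alternatively, for these explicit domains one could use the explicit eigenbases of Section \ref{sec: Fourier series} together with Poisson summation over the translation lattice in $R$.
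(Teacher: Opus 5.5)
Your Steps 1 and 2 are exactly the paper's argument: odd extension of $f$ over $R$, the sign flip under a side reflection forcing the Dirichlet condition on $\partial\Omega$, and uniqueness against the eigenfunction expansion. So the proposal is correct and follows the same route. The only difference is the passage to the diagonal: the paper simply inserts an approximate identity at $x$ and takes a limit, whereas you read the identity distributionally in $t$. That is the right call, since the series does not converge in the ordinary sense, and it is all Lemma \ref{lem: billiards} needs. One small fix: $\int\rho(t)E_t(z)\,dt=\frac{1}{2\pi}\int_0^\infty\rho\bigl(\sqrt{|z|^2+r^2}\bigr)(|z|^2+r^2)^{-1/2}\,dr$ is smooth in $z$, so you need not exclude any $y$ with $|g\cdot y-x|\in\operatorname{supp}\rho$. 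This matters, because the useful $\rho$ are precisely those concentrated near a loop length $|g\cdot x-x|$.
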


\begin{proof}
    Let $f$ be a smooth function supported in the interior of $\Omega$. Let $u$ be the solution to the wave equation \eqref{eq: wave equation}. Now let
    \[
        \tilde f(x) = \sum_{g \in R} \sgn(g) f(g \cdot x)
    \]
    and $\tilde u$ be the solution to the wave equation with initial data $\tilde f$. Since the wave equation is linear (or by \eqref{eq: convolution with fundamental solution}), we find
    \[
        \tilde u(t,x) = \sum_{g \in R} \sgn(g) u(t,g \cdot x).
    \]
    The right hand side expands to
    \begin{equation}\label{eq: rhs}
        \sum_{g \in R} \sgn(g) E_t * f(g \cdot x) = \int_{\R^2} \left( \sum_{g \in R} \sgn(g) E_t(g \cdot x - y) \right) f(y) \, dy.
    \end{equation}

    Now we evaluate the left hand side. Restrict $\tilde u$ to the fundamental domain $\Omega$. Note that $\tilde u$ satisfies the initial conditions
    \[
        \begin{cases}
            \tilde u(0,x) = 0 \\
            \frac{\partial \tilde u}{\partial t} (0,x) = f(x)
        \end{cases}
        \qquad \text{ for each } x \in \Omega.
    \]
    Recall, we have chosen $\Omega$ so that for each $x \in \Omega$, $x \in \partial \Omega$ if and only if there exists $g \in R$ with $\sgn(g) = -1$ and $g \cdot x = x$. Hence, if $x \in \partial \Omega$ with such a $g \in R$,
    \begin{align*}
        \tilde u(t,x) &= \sum_{h \in R} \sgn(h) u(t, h \cdot x) \\
        &= \sum_{h \in R} \sgn(hg) u(t, hg \cdot x) \\
        &= \sum_{h \in R} \sgn(h) \sgn(g) u(t, h \cdot x) \\
        &= - \sum_{h \in R} \sgn(h) u(t, h \cdot x) \\
        &= - \tilde u(t,x),
    \end{align*}
    and hence $\tilde u$ vanishes on $\partial \Omega$. Hence, $\tilde u$ is also subject to Dirichlet boundary conditions. Now observe
    \[
        \tilde v(t,x) = \sum_{j \in \N} \left(\int_{\Omega} f(y) \overline{e_j(y)} \, dy \right) \frac{\sin(t \lambda_j)}{\lambda_j} e_j(x)
    \]
    also satisfies the wave equation on $\Omega$, with the same initial data as $\tilde u$, and the same boundary conditions as $\tilde u$. By uniqueness, $\tilde u = \tilde v$, and hence
    \[
        \tilde u(t,x) = \int_{\R^2} \left( \sum_j \frac{\sin(t\lambda_j)}{\lambda_j} e_j(x) \overline{e_j(y)} \right) f(y) \, dy.
    \]

    To summarize, we have shown, for all smooth functions $f$ supported in the interior of $\Omega$, that
    \[
        \int_{\R^2} \left( \sum_j \frac{\sin(t\lambda_j)}{\lambda_j} e_j(x) \overline{e_j(y)} \right) f(y) \, dy = \int_{\R^2} \left( \sum_{g \in R} \sgn(g) E_t(g \cdot x - y) \right) f(y) \, dy.
    \]
    Replacing $f$ with an approximate identity centered at $x$ and taking the limit yields the desired identity.
\end{proof}

To make this useful, we must write the left side of the identity in Lemma \ref{lem: wave identity} in terms of billiard trajectories. In what follows, we let
\begin{equation}\label{eq: def looping spectrum}
    \ell(t,x) = \sum \sgn(g)
\end{equation}
where the sum is taken over $g \in R$ with $|g \cdot x - x| = t$. Appealing to the definition of $E_t$ in \eqref{eq: fundamental solution}, we have
\begin{equation}\label{eq: looping spectrum transform}
    \sum_{g \in R} \sgn(g) E_t(g \cdot x - x) = \frac{1}{2\pi} \sum_{0 \leq s < t} \ell(s,x) (t^2 - s^2)^{-\frac 1 2},
\end{equation}
where the latter sum is taken over the finitely many values of $s$ for which $\ell(s,x) \neq 0$. In fact, the right side is uniquely determined by $\ell(s,x)$.

To motivate the upcoming Lemma \ref{lem: billiards}, we make an observation regarding the structure of the right-hand side sum from \eqref{eq: looping spectrum transform}:
\begin{lemma}\label{lem: unique transform}
    Let $s_n$ be a strictly increasing sequence of nonnegative real numbers with $s_n \to \infty$, and let $a_n$ be a sequence of integers. Suppose
    \[
        \sum_{s_n < t} a_n (t^2 - s_n^2)^{-\frac 1 2} = 0 \qquad \text{ for each } t > 0.
    \]
    Then, $a_n = 0$ identically for each $n$.
\end{lemma}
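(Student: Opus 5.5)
We argue by contradiction, using the blow-up of $(t^2-s^2)^{-1/2}$ as $t \downarrow s$ to isolate the coefficients one at a time. Suppose not all $a_n$ vanish, and let $N$ be the smallest index with $a_N \neq 0$. Since $(s_n)$ is strictly increasing and $s_n \to \infty$, the sequence has no accumulation points. In particular $s_{N+1} > s_N$, and on the interval $t \in (s_N, s_{N+1}]$ the hypothesis reads
\[
    \sum_{n < N} a_n (t^2 - s_n^2)^{-\frac 1 2} + a_N (t^2 - s_N^2)^{-\frac 1 2} = 0 .
\]
Here the sum over $n<N$ vanishes term by term because $a_n = 0$ for $n<N$. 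Thus $a_N (t^2-s_N^2)^{-1/2} = 0$ for every $t \in (s_N, s_{N+1}]$. Since $(t^2-s_N^2)^{-1/2}$ is finite and positive there, $a_N = 0$, which is a contradiction.

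The same computation can be phrased as an induction. The base case takes $t \in (s_1, s_2]$, where the sum consists of the single term $a_1(t^2-s_1^2)^{-1/2}$. The inductive step restricts to $t \in (s_N, s_{N+1}]$ as above. The only points requiring care are that the sum over $s_n<t$ is finite for each $t$ (this follows from $s_n \to \infty$) and that each window $(s_N, s_{N+1}]$ is nonempty (this follows from strict monotonicity). The argument also never uses that the $a_n$ are integers; real coefficients suffice.

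I do not expect a genuine obstacle. The one subtlety is the convention at $t = s_n$: the sum is over the strict inequality $s_n < t$, so at $t = s_n$ the singular term is excluded and nothing is undefined. Choosing $t$ strictly inside $(s_N, s_{N+1})$ avoids the issue entirely. If a more robust argument were wanted, one could instead multiply by $(t-s_N)^{1/2}$ and let $t \downarrow s_N$. The singular term then tends to $a_N(2s_N)^{-1/2}$ when $s_N>0$, while all other terms stay bounded near $s_N$ and so vanish after multiplication. The case $s_N = 0$ is handled by multiplying by $t$ instead.
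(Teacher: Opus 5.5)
Your proof is correct and is essentially the paper's argument. The paper also inducts on $n$, picks $t$ with $s_n < t < s_{n+1}$ so that only the $n$th term survives, and concludes $a_n = 0$ because $(t^2 - s_n^2)^{-1/2} > 0$.
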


\begin{proof}
    We show $a_n = 0$ for every $n$ by induction, the inductive step assuming $a_1 = \cdots = a_{n-1} = 0$ (a vacuous hypothesis when $n = 1$). Choose any $t$ with $s_n < t < s_{n+1}$. Every term of index $m < n$ vanishes by the inductive hypothesis. Hence
    \[
        0 = \sum_{s_m < t} a_m (t^2 - s_m^2)^{-\frac 1 2} = a_n (t^2 - s_n^2)^{-\frac 1 2}.
    \]
    Since $(t^2 - s_n^2)^{-\frac 1 2} \neq 0$, we conclude $a_n = 0$, completing the induction.
\end{proof}

Combining Lemmas \ref{lem: wave identity} and \ref{lem: unique transform} yields the key lemma of this section:

\begin{lemma}\label{lem: billiards}
    Suppose $x$ and $y$ are points in the interior of $\Omega$ such that $\ell(t,x) \neq \ell(t,y)$ for some $t > 0$. Then, $N(x,\lambda) \neq N(y,\lambda)$ for some $\lambda > 0$.
\end{lemma}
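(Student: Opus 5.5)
We argue by contraposition. Suppose $N(x,\lambda) = N(y,\lambda)$ for every $\lambda \geq 0$; we will show $\ell(t,x) = \ell(t,y)$ for all $t > 0$. The bridge is Lemma \ref{lem: wave identity}, whose right-hand side depends only on $N(x,\cdot)$. Indeed, as a Stieltjes integral,
\[
    \sum_{j} \frac{\sin(t\lambda_j)}{\lambda_j} |e_j(x)|^2 = \int_0^\infty \frac{\sin(t\lambda)}{\lambda} \, dN(x,\lambda).
\]
So equality of $N(x,\cdot)$ and $N(y,\cdot)$ forces equality of these spectral sums at $x$ and at $y$. By Lemma \ref{lem: wave identity} and \eqref{eq: looping spectrum transform}, we then get
\[
    \sum_{0 \le s < t} \bigl(\ell(s,x) - \ell(s,y)\bigr)(t^2 - s^2)^{-\frac 1 2} = 0 \qquad \text{for all } t>0 .
\]

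Next we put this in the form required by Lemma \ref{lem: unique transform}. Let $s_n$ enumerate, in increasing order, the values $|g\cdot x - x|$ and $|g\cdot y - y|$ for $g \in R$, and set $a_n = \ell(s_n,x) - \ell(s_n,y)$. These are integers. The set of values is discrete with $s_n \to \infty$, because $R$ acts properly discontinuously: only finitely many tiles $g\Omega$ meet any bounded ball, since the tiling is periodic by Proposition \ref{prop: characterization of evenly tiling domains}. This finiteness is also what makes \eqref{eq: looping spectrum transform} a finite sum. If the set of values is finite, we pad with extra $s_n$ carrying $a_n = 0$. Lemma \ref{lem: unique transform} then gives $a_n = 0$ for all $n$, so $\ell(t,x) = \ell(t,y)$ for every $t$, which contradicts the hypothesis.

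The main obstacle is making the spectral step rigorous, since the series $\sum_j \sin(t\lambda_j)\lambda_j^{-1}|e_j(x)|^2$ need not converge absolutely. The identity in Lemma \ref{lem: wave identity} should therefore be read in the sense that limit argument produces, for instance as an $\epsilon\to0$ limit after pairing with a test function of $t$, or after mollifying in $x$. In any such reading, the summation is organised by $\lambda$ alone, with each $|e_j(x)|^2$ weighted by a function of $\lambda_j$. It therefore equals $\lim$ of $\int \phi_\epsilon(\lambda)\, dN(x,\lambda)$, and this limit depends only on $N(x,\cdot)$. To be safe, I would integrate both sides against an arbitrary $\rho \in C_c^\infty((0,\infty))$ in $t$. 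The spectral side becomes $\sum_j \hat\rho$-type terms decaying rapidly in $\lambda_j$, which converge absolutely and are manifestly functionals of $N$. The geometric side becomes $\int \rho(t)\sum_s(\ldots)\,dt$. Equality for all $\rho$ then gives equality of the locally integrable functions of $t$, hence equality for all $t$ away from the discrete set $\{s_n\}$. That is all Lemma \ref{lem: unique transform}'s proof uses, since it only evaluates at $t \in (s_n,s_{n+1})$.
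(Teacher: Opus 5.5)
Your proof is correct and follows the paper's own argument. You argue by contraposition and observe that the spectral side of Lemma \ref{lem: wave identity} depends on $x$ only through $N(x,\cdot)$; the paper regroups the series by eigenvalue, which is exactly your Stieltjes-integral rewriting. You then conclude with \eqref{eq: looping spectrum transform} and Lemma \ref{lem: unique transform}, as the paper does.

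Your extra care goes beyond what the paper writes and tightens the argument. You build the sequence $s_n$ from both sets of loop lengths with $s_n \to \infty$. You also read the spectral series, whose convergence is genuinely delicate, distributionally in $t$, and note that Lemma \ref{lem: unique transform} only uses $t \in (s_n, s_{n+1})$.
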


\begin{proof}
    We prove the contrapositive. Suppose $N(x,\lambda) = N(y,\lambda)$ for all $\lambda$. Then,
    \[
        \sum_{\lambda_j = \lambda} |e_j(x)|^2 = \sum_{\lambda_j = \lambda} |e_j(y)|^2 \qquad \text{ for each $\lambda$}.
    \]
    It follows
    \begin{align*}
        \sum_j \frac{\sin(t\lambda_j)}{\lambda_j} |e_j(x)|^2 &= \sum_\lambda \frac{\sin(t\lambda)}\lambda \sum_{\lambda_j = \lambda}|e_j(x)|^2 \\
        &= \sum_\lambda \frac{\sin(t\lambda)}\lambda \sum_{\lambda_j = \lambda}|e_j(y)|^2 \\
        &= \sum_j \frac{\sin(t\lambda_j)}{\lambda_j} |e_j(y)|^2.
    \end{align*}
    Lemma \ref{lem: wave identity} and \eqref{eq: looping spectrum transform} ensure
    \[
        0 = \frac 1 {2\pi} \sum_{0 \leq s < t} (\ell(s,x) - \ell(s,y)) (t^2 - s^2)^{-\frac 1 2} \qquad \text{ for all $t > 0$.}
    \]
    Lemma \ref{lem: unique transform} ensures $\ell(t,x) = \ell(t,y)$ for each $t \geq 0$.
\end{proof}

%%%%%%%%%%%%%%%%%%%%%%%%%%%%%%
% CALCULATING EIGENFUNCTIONS %
%%%%%%%%%%%%%%%%%%%%%%%%%%%%%%

\section{Fourier series for evenly-tiling domains} \label{sec: Fourier series}

Recall the \emph{orthogonal group} $O(\R^2)$ is the group of $2 \times 2$ matrices $U$ with real entries for which $U^t U = I$. It is the group of linear isometries on $\R^2$.

Let $\Omega$ be a triangle that tiles the plane evenly. By Proposition \ref{prop: only a few tilings}, $\Omega$ may be one of three types of triangle --- the equilateral, isosceles right, and 30-60-90 triangle. Without loss of generality, we will assume each is in standard position, as depicted by the shaded triangles in Figure \ref{fig: triangles in standard position}.

\begin{figure}[ht]
  \centering
  % (1) 45-45-90 -> square of 8 triangles (hypotenuse = 1)
  \begin{tikzpicture}[scale=1.7, baseline={(0,0)}]
    \useasboundingbox (-1.2,-1.25) rectangle (1.2,1.25);
    \fill[gray!30] (0,0) -- (0:{1/sqrt(2)}) -- (45:1) -- cycle;
    \draw[->,gray] (-1.2,0) -- (1.2,0);
    \draw[->,gray] (0,-1.25) -- (0,1.25);
    \draw[thick] (45:1) -- (135:1) -- (225:1) -- (315:1) -- cycle;
    \draw[thick] (225:1) -- (45:1);
    \draw[thick] (315:1) -- (135:1);
    \draw[thick] (180:{1/sqrt(2)}) -- (0:{1/sqrt(2)});
    \draw[thick] (270:{1/sqrt(2)}) -- (90:{1/sqrt(2)});
  \end{tikzpicture}\hfill
  % (2) equilateral -> hexagon of 6 triangles
  \begin{tikzpicture}[scale=1.7, baseline={(0,0)}]
    \useasboundingbox (-1.2,-1.25) rectangle (1.2,1.25);
    \fill[gray!30] (0,0) -- (30:1) -- (330:1) -- cycle;
    \draw[->,gray] (-1.2,0) -- (1.2,0);
    \draw[->,gray] (0,-1.25) -- (0,1.25);
    \draw[thick] (30:1) -- (90:1) -- (150:1) -- (210:1) -- (270:1) -- (330:1) -- cycle;
    \foreach \a in {30,90,150,210,270,330} { \draw[thick] (0,0) -- (\a:1); }
  \end{tikzpicture}\hfill
  % (3) 30-60-90 -> same hexagon split into 12 triangles
  \begin{tikzpicture}[scale=1.7, baseline={(0,0)}]
    \useasboundingbox (-1.2,-1.25) rectangle (1.2,1.25);
    \fill[gray!30] (0,0) -- (0:{sqrt(3)/2}) -- (30:1) -- cycle;
    \draw[->,gray] (-1.2,0) -- (1.2,0);
    \draw[->,gray] (0,-1.25) -- (0,1.25);
    \draw[thick] (30:1) -- (90:1) -- (150:1) -- (210:1) -- (270:1) -- (330:1) -- cycle;
    \foreach \a in {30,90,150,210,270,330} { \draw[thick] (0,0) -- (\a:1); }
    \foreach \a in {0,60,120,180,240,300} { \draw[thick] (0,0) -- (\a:{sqrt(3)/2}); }
  \end{tikzpicture}
  \caption{In each of the three diagrams, the shaded triangle depicts $\Omega$ in standard position, along with its (unshaded) reflections across sides incident to the origin. The union of $\Omega$ with its reflections is a fundamental domain (in fact a Voronoi domain) of the lattice of translations $\Lambda$ in Proposition \ref{prop: structure of reflection group}, while $G$ is a symmetry group of the fundamental domain.}
  \label{fig: triangles in standard position}
\end{figure}

\begin{proposition}\label{prop: structure of reflection group}
    Let $\Omega$ be any of the three triangles depicted in Figure \ref{fig: triangles in standard position}, and let $R$ be the group generated by the reflections over its sides. Then, $R$ is the inner semi-direct product
    \[
        R = \Lambda G
    \]
    where $\Lambda$ is a lattice of translations in $\R^2$ and $G$ is a finite subgroup of the orthogonal group $O(\R^2)$. For each of the three cases, let $L$ be the length of the longest side of the triangle $\Omega$. We have:
    \begin{enumerate}
        \item (45-45-90 triangle) $\Lambda$ is the group of translations associated with the lattice $\sqrt 2 L \Z^2$, and $G$ is the dihedral group $G = \langle \sigma, \tau \rangle$ of order $8$ where
        \[
            \sigma = \begin{bmatrix}
                0 & -1 \\
                1 & 0
            \end{bmatrix}
            \qquad \text{ and } \qquad
            \tau = \begin{bmatrix}
                1 & 0 \\
                0 & -1
            \end{bmatrix}.
        \]
        \item (Equilateral triangle) $\Lambda$ is the group of translations associated with the lattice
        \[
            L\begin{bmatrix}
                \frac {\sqrt 3} 2 & \frac{\sqrt 3} 2 \\
                \frac 3 2 & -\frac 3 2
            \end{bmatrix}
            \Z^2,
        \]
        and $G$ is the dihedral group $G = \langle \sigma, \tau \rangle$ of order $6$ where
        \[
            \sigma = \begin{bmatrix}
                -\frac 1 2 & -\frac {\sqrt 3} 2 \\
                \frac {\sqrt 3} 2 & -\frac 1 2
            \end{bmatrix}
            \qquad \text{ and } \qquad
            \tau = \begin{bmatrix}
                -1 & 0 \\
                0 & 1
            \end{bmatrix}.
        \]
        \item (30-60-90 triangle) $\Lambda$ is the same lattice of translations as for the equilateral triangle. $G$ is the dihedral group $G = \langle \sigma, \tau \rangle$ of order $12$ where
        \[
            \sigma = \begin{bmatrix}
                \frac 1 2 & -\frac {\sqrt 3} 2 \\
                \frac {\sqrt 3} 2 & \frac 1 2
            \end{bmatrix}
            \qquad \text{ and } \qquad
            \tau = \begin{bmatrix}
                1 & 0 \\
                0 & -1
            \end{bmatrix}.
        \]
    \end{enumerate}
\end{proposition}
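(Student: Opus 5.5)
The plan is to follow the standard structure theory of crystallographic reflection groups. Every element of $R$ is an affine isometry $x \mapsto a + Ux$. The map $g \mapsto U$ taking $g$ to its linear part is a homomorphism from $R$ to $O(\R^2)$. Its kernel is the subgroup of translations in $R$, which we call $\Lambda$; it is normal in $R$. I would prove three things:
\begin{enumerate}
\item the image of the linear-part map is exactly the stated $G$;
\item $G$ itself sits inside $R$, i.e. $G \subset R$ as maps fixing the origin;
\item the translations in $R$ are exactly the stated lattice.
\end{enumerate}
Given (2), $G$ is a complement to $\Lambda$ with $\Lambda \cap G = \{1\}$, so $R = \Lambda G$ is an inner semidirect product, with $G$ normalizing $\Lambda$.

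For (1) and (2), use that $\Omega$ in standard position has a vertex at the origin. The two sides through the origin meet at angle $\pi/4$, $\pi/3$, or $\pi/6$ respectively. The reflections over these two lines lie in $R$, fix the origin, and hence are linear. Together they generate a dihedral group of order $8$, $6$, or $12$. A direct check shows that this group contains the listed $\sigma$ and $\tau$:
\begin{itemize}
\item in the $45$ and $30$-$60$-$90$ cases, $\tau$ is the reflection over the $x$-axis side, and $\sigma$ is the product of the two reflections (rotation by twice the angle) or a power of it;
\item in the equilateral case, the sides lie on the lines at $\pm 30^\circ$, their product is rotation by $120^\circ$ (this is $\sigma$), and $\tau$ is the reflection over the $y$-axis, obtained as a composition of the side reflections with a rotation.
\end{itemize}
This gives $G \subseteq R$. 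For the reverse inclusion on linear parts, note that $R$ is generated by three reflections. The two through the origin have linear parts in $G$. The third side is the edge opposite the origin, and its reflection has linear part equal to the reflection over a line in one of the directions already present in the tiling of Figure \ref{fig: triangles in standard position}; one checks this lies in $G$. So the linear-part image is generated inside $G$ and equals $G$.

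For (3), write the third reflection as $r_3(x) = v + U_3 x$ with $U_3 \in G$. Then $t_v = r_3 \circ U_3^{-1}$ is a translation in $R$. Conjugating by $G$ gives translations by $Gv$. Every element of $R$ is a word in the generators, and moving all linear parts to the right, using $U t_w U^{-1} = t_{Uw}$, shows that $\Lambda$ is generated by the orbit $Gv$. It then remains to compute $v$ in each case and check that the $\Z$-span of $Gv$ equals the stated lattice:
\begin{itemize}
\item $45$-$45$-$90$: $v$ is twice the foot of the perpendicular from the origin to the hypotenuse, and its $G$-orbit spans $\sqrt2 L\Z^2$ (with the appropriate normalization of $L$);
\item equilateral: the orbit of $v$ under the rotations is the three vectors $L(\pm\tfrac{\sqrt3}{2},\tfrac32)$ and $(0,-3L)$ up to sign, spanning the given lattice;
\item $30$-$60$-$90$: the orbit consists of six vectors generating the same hexagonal lattice, matching the Voronoi-cell picture of Figure \ref{fig: triangles in standard position}.
\end{itemize}

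The main obstacle is the bookkeeping in (3). The orbit $Gv$ must be shown to generate exactly the stated lattice, not an index-$k$ sublattice or superlattice, and this must be consistent with how $L$ is normalized in each case (note the lattice scaling written in the statement). I would verify this by checking two things: each listed basis vector is an integer combination of orbit vectors, and conversely each orbit vector is an integer combination of the basis. As a consistency check, the covolume of $\Lambda$ should equal $|G|\cdot\operatorname{area}(\Omega)$, since by Proposition \ref{prop: characterization of evenly tiling domains} the union of $\Omega$ with its $|G|$ images in the figure is a fundamental domain for $\Lambda$.
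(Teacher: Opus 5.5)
\textbf{Verdict: correct, with two slips in the case computations.} For the decomposition $R=\Lambda G$ your argument matches the paper. The paper also uses the linear-part homomorphism $\phi(\rho)=\tau_{-\rho(0)}\rho$ and takes $\Lambda=\ker\phi$. It checks that $\phi(\rho_3)$, the reflection in the $y$-axis, is a word in the two reflections through the origin, so $G=\langle\rho_1,\rho_2\rangle\subset R$.

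\textbf{Where your route differs: the lattice.} The paper proves $\Lambda$ is a lattice abstractly:
\begin{enumerate}
\item discreteness, because $\Omega$ is a fundamental domain for $R$;
\item non-degeneracy, because translates of a disk containing a fundamental domain of $\Lambda$ must cover $\R^2$;
\item then Gruber's characterization.
\end{enumerate}
The explicit lattices in (1)--(3) are left to the reader via Figure~\ref{fig: triangles in standard position}.

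You instead note that $R$ is generated by $G$ and the single translation $t_v=r_3U_3^{-1}$. Rewriting words with $Ut_wU^{-1}=t_{Uw}$ then gives $\Lambda=\{t_w : w\in \Z\text{-span of } Gv\}$ exactly. Identifying that span with a nonsingular $A\Z^2$ proves both that $\Lambda$ is a lattice and the explicit descriptions at once.

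Your route is more elementary and self-contained. It never uses Proposition~\ref{prop: characterization of evenly tiling domains}, so your covolume test is only an optional sanity check; the two-way integer-combination check already suffices. It also actually establishes items (1)--(3). The paper's route avoids all case-by-case computation, but on its own it only yields ``some lattice.''

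\textbf{The two slips.} Neither changes the final answer, but both should be fixed.
\begin{itemize}
\item \emph{45-45-90 case.} With a $45^\circ$ angle at the origin, the hypotenuse passes through the origin, so your ``twice the foot of the perpendicular to the hypotenuse'' would give $v=0$. The side opposite the origin is the leg $x=L/\sqrt2$. So $v=(\sqrt2 L,0)$, with orbit $\{(\pm\sqrt2L,0),(0,\pm\sqrt2L)\}$ spanning $\sqrt2L\Z^2$.
\item \emph{Equilateral case.} Here $v=(\sqrt3L,0)$, and its orbit is $\pm(\sqrt3L,0)$, $\pm L(\tfrac{\sqrt3}{2},\pm\tfrac32)$. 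The vector $(0,-3L)$ has length $3L\neq\sqrt3L$, so it cannot be an orbit element; it is a difference of two orbit vectors. The $\Z$-span is still the stated lattice.
\end{itemize}
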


We assume $\Omega$ is in standard position so that $G$ will be an honest subgroup of $R$; otherwise $G$ need not be contained in $R$.

\begin{proof}
    For each $x \in \R^2$, let $\tau_x : \R^2 \to \R^2$ denote the translation $\tau_x(y) = x + y$. Now consider the map $\phi : R \to \Isom(\R^2)$ given by
    \[
        \phi(\rho) = \tau_{-\rho(0)} \rho
    \]
    for each $\rho \in R$. One quickly verifies $\phi$ is a group homomorphism and that $\phi(\rho)(0) = \tau_{-\rho(0)}\rho(0) = 0$, and hence $\phi(\rho)$ is a linear isometry (rather than an affine isometry more broadly), and hence belongs to the orthogonal group $O(\R^2)$. 
    
    Let $\Lambda$ and $G$ denote the kernel and image of $\phi$, respectively. For each $\rho \in R$, we write
    \[
        \rho = \tau_{\rho(0)} \phi(\rho),
    \]
    note that $\phi(\rho) \in G$ and that $\phi(\tau_{\rho(0)}) = \tau_{-\tau_{\rho(0)}(0)} = I$, and hence $\tau_{\rho(0)} \in \Lambda$. Hence, we will be done proving the general claim in the proposition provided we can prove (i) $G$ is a finite subgroup of $R$, (ii) $\Lambda$ and $G$ have trivial intersection, and (iii) $\Lambda$ is a lattice of translations. For the specific claims of (1), (2), and (3), we refer to Figure \ref{fig: triangles in standard position} as a suitably convincing picture, and otherwise leave the details to the reader.

    Let $\rho_1, \rho_2$, and $\rho_3$ denote the reflections over the three sides of $\Omega$ in standard position, where here we let $\rho_1$ be the reflection over the side extending into the first quadrant, and $\rho_3$ the reflection over the side opposite the origin. We claim that $\phi(\rho_3) \in \langle \rho_1, \rho_2 \rangle$, so that
    \[
        G = \langle \phi(\rho_1), \phi(\rho_2), \phi(\rho_3) \rangle = \langle \rho_1, \rho_2 \rangle.
    \]
    We leave it to the reader to verify, in each of the three cases (1), (2), and (3) that $G$ is the indicated dihedral group. We note
    \[
        \phi(\rho_3) = \tau_{-\rho_3(0)} \rho_3.
    \]
    One quickly verifies that $\phi(\rho_3)(x,y) = (-x,y)$, which we can write in each of three cases as
    \begin{align*}
        \phi(\rho_3) &= \rho_1 \rho_2 \rho_1 && \text{(45-45-90 triangle)} \\
        \phi(\rho_3) &= \rho_1 \rho_2 \rho_1 && \text{(equilateral triangle)} \\
        \phi(\rho_3) &= \rho_1 \rho_2 \rho_1 \rho_2 \rho_1 && \text{(30-60-90 triangle)}.
    \end{align*}
    We conclude that $G$ is a finite subgroup of $R \cap O(\R^2)$, and we have proved claim (i).

    Next, we claim that $\Lambda$ is a translation subgroup of $\R^2$. To see it, we note that if $\rho \in \Lambda = \ker \phi$, then $I = \tau_{-\rho(0)} \rho$ and hence $\rho = \tau_{\rho(0)}$. Claim (ii) now quickly follows. If $\rho \in \Lambda \cap G$, then $\rho$ is a translation which fixes the origin, and hence is the identity. We also take this opportunity to justify why $R = \Lambda G$.

    Now we finish by proving (iii). By \cite[\S1.3 Theorem 2]{gruberGeometryNumbers1987}, to prove $\Lambda$ is a lattice, it suffices to show it is discrete and not contained in a one-dimensional subspace of $\R^2$. Again, Figure \ref{fig: triangles in standard position} offers a convincing picture, but we elect for a general argument here.
    To show $\Lambda$ is discrete, consider any open disk $D$ in $\Omega$ of radius $\epsilon > 0$. Now, consider the translation $\tau_x \in \Lambda$ and suppose $|x| < \epsilon$. Then, if $x_0$ denotes the center of $D$, we have
    \[
        |\tau_x(x_0) - x_0| = |x| < \epsilon,
    \]
    and hence $\tau_x(D)$ and $D$ have nonempty and, since both $\tau_x(D)$ and $D$ are open disks, positive measure intersection. Since $D$ is contained in a fundamental domain $\Omega$, we conclude $\tau_x = I$, and hence $\Lambda$ is discrete.
    To show $\Lambda$ is not contained in a one-dimensional subspace, first pick representatives $\rho_1, \ldots, \rho_n$ from the $n = \# G$ cosets of $\Lambda$ in $R$, and note
    \[
        \tilde \Omega := \bigcup_{j = 1}^n \rho_j(\Omega)
    \]
    is a fundamental domain for $\Lambda$. Let $D_r$ be an open disk of radius $r$ large enough to contain $\tilde \Omega$ and note that
    \[
        \R^2 = \bigcup_{\tau_x \in \Lambda} (D_r + x).
    \]
    We then quickly conclude that $\{x \in \R^2 : \tau_x \in \Lambda \}$ is not contained in a line, since otherwise $D_r + x$ for $\tau_x \in \Lambda$ would not cover $\R^2$.
\end{proof}

Now that we understand the structure of the tiling group $R$, we may describe a convenient basis for $L^2(\Omega)$ constructed from the Fourier basis for $L^2(\R^2/\Lambda)$. From this, we can explicitly calculate as many eigenfunctions as we need.

Let $\Omega$, $R$, $\Lambda$, and $G$ be as in Proposition \ref{prop: structure of reflection group}, and let $L^2_{\text{odd}}(\R^2/R)$ denote the space of functions on $\R^2$ satisfying the modified periodicity condition
\[
    f \circ \rho = \sgn(\rho) f
\]
where $\sgn(\rho)$ is the determinant, either $1$ or $-1$, of the linear part of $\rho$ as an affine transformation, and where the restriction of $f$ to $\Omega$ is in $L^2(\Omega)$. Note that $L^2_\text{odd}(\R^2/R) \subset L^2(\R^2/\Lambda)$, since $\Lambda$ is a subgroup of $R$ and $\sgn(\tau) = 1$ for each $\tau \in \Lambda$. Let $\tilde \Omega$ be the square or hexagonal fundamental domain of $\Lambda$ as depicted in Figure \ref{fig: triangles in standard position}, and let $f \in L^2_\text{odd}(\R^2/R)$. Then, by Corollary \ref{cor: lattice fourier series}, we may expand $f$ in terms of its Fourier series
\begin{align*}
    f(x) &= \sum_{\xi \in \widehat \Lambda} e^{2\pi i x \cdot \xi} \frac 1 {|\tilde \Omega|} \int_{\tilde \Omega} e^{-2\pi i y \cdot \xi} f(y) \, dy 
    \\
    &= \sum_{\xi \in \widehat \Lambda} e^{2\pi i x \cdot \xi} \frac 1 {|\tilde \Omega|} \int_{\tilde \Omega} e^{-2\pi i y \cdot \xi} \frac{1}{\#G} \sum_{g \in G} \sgn(g) f(gy) \, dy 
    \\
    &= \sum_{\xi \in \widehat \Lambda} e^{2\pi i x \cdot \xi} \frac 1 {\#G} \sum_{g \in G} \sgn(g) \frac 1 {|\tilde \Omega|} \int_{\tilde \Omega} e^{-2\pi i g^{-1} y \cdot \xi} f(y) \, dy 
    \\
    &= \sum_{\xi \in \widehat \Lambda} e^{2\pi i x \cdot \xi} \frac 1 {|\tilde \Omega|} \int_{\tilde \Omega} \left(\frac 1 {\#G} \sum_{g \in G} \sgn(g) e^{-2\pi i y \cdot g\xi}\right) f(y) \, dy 
    \\
    &= \sum_{\xi \in \widehat \Lambda} e^{2\pi i x \cdot \xi} \frac 1 {|\Omega|} \int_{\Omega} \left(\frac 1 {\#G} \sum_{g \in G} \sgn(g) e^{-2\pi i y \cdot g\xi}\right) f(y) \, dy,
\end{align*}
The third line follows from a change of variables $y \mapsto g^{-1} y$, where we note $g\tilde \Omega = \tilde \Omega$ for each $g \in G$. The fourth line follows since $g$ is an orthogonal transformation on $\R^2$. The fifth line follows since the integrand is now invariant under change of variables by an element in $G$. In fact, we now note that the coefficient
\[
     a_\xi := \frac 1 {|\Omega|} \int_{\Omega} \left(\frac 1 {\#G} \sum_{g \in G} \sgn(g) e^{-2\pi i y \cdot g\xi}\right) f(y) \, dy
\]
satisfies
\[
    a_{g^{-1}\xi} = \sgn(g) a_\xi \qquad \text{ for each } g \in G,
\]
and hence
\begin{align*}
    f(x) &= \sum_{\xi \in \widehat \Lambda} e^{2\pi i x \cdot \xi} a_\xi \\
    &= \sum_{\xi \in \widehat \Lambda} e^{2\pi i x \cdot \xi} \frac 1 {\#G} \sum_{g \in G} \sgn(g) a_{g^{-1}\xi} \\
    &= \frac 1 {\#G} \sum_{g \in G} \sgn(g) \sum_{\xi \in \widehat \Lambda} e^{2\pi i x \cdot \xi} a_{g^{-1}\xi} \\
    &= \frac 1 {\#G} \sum_{g \in G} \sgn(g) \sum_{\xi \in \widehat \Lambda} e^{2\pi i x \cdot g\xi} a_{\xi} \\
    &= \sum_{\xi \in \widehat \Lambda} \left( \frac 1 {\# G} \sum_{g \in G} \sgn(g) e^{2\pi i x \cdot g\xi} \right) a_\xi.
\end{align*}
To put everything together:

\begin{lemma}\label{lem: automorphic basis}
    Let $\Omega$, $R$, $\Lambda$, and $G$ be as in Proposition \ref{prop: structure of reflection group}. Then for each $f \in L^2(\Omega)$, we have
    \[
        f(x) = \sum_{\xi \in \widehat \Lambda} \varphi_\xi(x) \int_{\Omega} f(y) \overline{\varphi_\xi(y)} \, dy
    \]
    where
    \[
        \varphi_\xi(x) = \frac 1 {\# G} \sum_{g \in G} \sgn(g) e^{2\pi i x \cdot g\xi}
    \]
    and where $\varphi_\xi(\rho x) = \sgn(\rho) \varphi_\xi(x)$ for each $x \in \R^2$ and $\rho \in R$. In particular, $\varphi_\xi$ vanishes on the boundary of $\Omega$ and
    \[
        \Delta \varphi_\xi = -|2\pi \xi|^2 \varphi_\xi.
    \]
\end{lemma}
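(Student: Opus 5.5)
We prove Lemma \ref{lem: automorphic basis} by extending $f$ to the plane, expanding in Fourier series, and then checking the symmetry and eigenfunction properties of $\varphi_\xi$ separately.

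\textbf{Extension and expansion.} Given $f \in L^2(\Omega)$, we first extend it to $\R^2$ as the $R$-odd function
\[
F(\rho x) = \sgn(\rho) f(x), \qquad x \in \Omega,\ \rho \in R.
\]
This is well defined almost everywhere because $\Omega$ is a fundamental domain for $R$ (Proposition \ref{prop: characterization of evenly tiling domains}); the boundaries have measure zero. Then $F \in L^2_{\text{odd}}(\R^2/R) \subset L^2(\R^2/\Lambda)$.

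The computation preceding the lemma applies verbatim to $F$, with one caveat. The step from $\tilde\Omega$ to $\Omega$ uses that $\tilde \Omega$ is, up to measure zero, the union of the $\#G$ tiles $g\Omega$, $g \in G$, and that $|\tilde\Omega| = \#G\,|\Omega|$. Using this, we rewrite the coefficient as
\[
a_\xi = \frac{1}{|\Omega|}\int_\Omega f(y)\,\overline{\varphi_\xi(y)}\,dy,
\]
noting that $\overline{\varphi_\xi(y)} = \frac1{\#G}\sum_g \sgn(g) e^{-2\pi i y\cdot g\xi}$. The expansion then reads $f = \sum_\xi \varphi_\xi\, a_\xi$ on $\Omega$, converging in $L^2$ by Corollary \ref{cor: lattice fourier series}.

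\textbf{The normalization.} The main bookkeeping obstacle is the factor $1/|\Omega|$, which the statement as written omits. I would absorb it by normalizing $|\Omega|$ appropriately, or equivalently read the statement with this factor, depending on the measure normalization in Corollary \ref{cor: lattice fourier series}. I would check this carefully against the appendix convention rather than assume it.

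\textbf{Symmetry of $\varphi_\xi$.} For $h \in G$,
\[
\varphi_\xi(hx) = \frac1{\#G}\sum_{g}\sgn(g)e^{2\pi i x\cdot h^{-1}g\xi},
\]
since $h$ is orthogonal. Reindexing $g \mapsto hg$ gives $\sgn(h)\varphi_\xi(x)$. For a translation $\tau_\lambda \in \Lambda$, we have $e^{2\pi i (x+\lambda)\cdot g\xi} = e^{2\pi i x \cdot g\xi}$ provided $g\xi \in \widehat\Lambda$, i.e. provided $G$ preserves $\widehat \Lambda$. This holds because $G$ normalizes $\Lambda$: since $R = \Lambda G$ is a semidirect product with $\Lambda$ normal, $g\Lambda = \Lambda$, so $g\widehat\Lambda = \widehat\Lambda$. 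Combining the two cases via $R = \Lambda G$ gives $\varphi_\xi \circ \rho = \sgn(\rho)\varphi_\xi$.

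\textbf{Boundary values and eigenvalue.} Each boundary point of $\Omega$ is fixed by a reflection $\rho \in R$ with $\sgn(\rho) = -1$, as used in Lemma \ref{lem: wave identity}. Hence $\varphi_\xi(x) = -\varphi_\xi(x)$ there, so $\varphi_\xi$ vanishes on $\partial\Omega$. Finally, each exponential satisfies
\[
\Delta e^{2\pi i x\cdot g\xi} = -|2\pi g\xi|^2 e^{2\pi i x\cdot g\xi} = -|2\pi\xi|^2 e^{2\pi i x\cdot g\xi},
\]
since $g$ is orthogonal, so the same eigenvalue relation holds for $\varphi_\xi$.
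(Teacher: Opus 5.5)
Your proposal is correct and follows the paper's route. The expansion comes from the Fourier-series computation preceding the lemma, the sign rule from $\rho=\tau h$ with $R=\Lambda G$, boundary vanishing from the side reflections, and the eigenvalue from orthogonality of $G$. Two things you add are real improvements: the explicit odd extension of $f$, and the check that $G$ preserves $\widehat\Lambda$, which the paper's ``the exponential is $\Lambda$-periodic'' step uses without justification.

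Your normalization worry is also correct, and it settles definitively. The coefficients in Corollary \ref{cor: lattice fourier series} carry the factor $1/|AQ|$, so the paper's own derivation gives $a_\xi=\frac{1}{|\Omega|}\int_\Omega f\,\overline{\varphi_\xi}$, and the printed identity is missing this $1/|\Omega|$. To see it is not a convention issue, test the printed version on $f=\varphi_\eta$ with $G$ acting freely on $G\eta$: its right side is $|\Omega|\varphi_\eta$, for example $\frac18\varphi_\eta$ for the 45-45-90 triangle with $\Lambda=\Z^2$. So the omission is a typo in the statement, not something you can absorb by rescaling.
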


\begin{proof}
    We have already proved the first identity, and what remains is to prove the $R$-invariance of $\varphi_\xi$. Fix $\rho \in R$ and write, per Proposition \ref{prop: structure of reflection group},
    \[
        \rho = \tau h \qquad \text{ for some $\tau \in \Lambda$, $h \in G$.}
    \]
    We write
    \begin{align*}
        \varphi_\xi(\rho x) &= \varphi_\xi(\tau h x) \\
        &= \frac 1 {\#G} \sum_{g \in G}  \sgn(g) e^{2\pi i \tau h x \cdot g \xi} \\
        &= \frac 1 {\#G} \sum_{g \in G} \sgn(g) e^{2\pi i h x \cdot g \xi} && \text{(the exponential is $\Lambda$-periodic)} \\
        &= \sgn(h) \frac 1 {\#G} \sum_{g \in G} \sgn(h^{-1} g) e^{2\pi i x \cdot h^{-1} g \xi} && \text{($h \in O(\R^2)$)} \\
        &= \sgn(h) \frac 1 {\#G} \sum_{g \in G} \sgn(g) e^{2\pi i x \cdot g \xi} \\
        &= \sgn(\rho) \varphi_\xi(x).
    \end{align*}
    That $\varphi_\xi$ vanishes along the boundary of $\Omega$ follows immediately. To see that $\varphi_\xi$ is a Laplace eigenfunction, we first note that since $G \subset O(\R^2)$, $|g\xi| = |\xi|$ for each $g \in G$. Hence,
    \begin{multline*}
        \Delta \varphi_\xi(x) = \frac{1}{\# G} \sum_{g \in G} \sgn(g) \Delta e^{2\pi i x \cdot g\xi} = \frac{1}{\# G} \sum_{g \in G} -\sgn(g) |2\pi g\xi|^2 e^{2\pi i x \cdot g\xi} \\
        = \frac{1}{\# G} \sum_{g \in G} -\sgn(g) |2\pi \xi|^2 e^{2\pi i x \cdot g\xi} = -|2\pi \xi|^2 \varphi_\xi(x).
    \end{multline*}
\end{proof}

We are nearly ready to write down the Laplace eigenbasis in terms of Fourier exponentials, $\widehat \Lambda$ and $G$. Before we do, we introduce just a little more notation. Fix $\lambda \geq 0$ and consider the span
\[
    E_\lambda = \operatorname{span} \{\varphi_\xi : \xi \in \widehat \Lambda, \ |2\pi \xi| = \lambda \}.
\]
By the lemma above, $E_\lambda$ is contained in the set of Laplace eigenfunctions on $\Omega$ with Dirichlet boundary conditions. Hence, the spaces $E_\lambda$ and $E_\mu$ for $\lambda \neq \mu$ are orthogonal in $L^2(\Omega)$ (though this can be determined by direct computation). Select an orthonormal basis for each $E_\lambda$ and collect them into a set $\{e_j : j \in \N\}$. By construction, $\{e_j : j \in \N\}$ is an orthonormal system of Laplace eigenfunctions on $\Omega$ with Dirichlet boundary conditions, and by the lemma, spans $L^2(\Omega)$. We now give an explicit form to the eigenfunctions $e_j$.

\begin{lemma}\label{lem: orthonormal automorphic basis}
    Let $\Omega$, $R$, $\Lambda$, and $G$ be as in Proposition \ref{prop: structure of reflection group}, and let $E_\lambda$ be as above. Then, $E_\lambda$ admits an orthogonal basis of elements
    \[
        \sqrt{ \frac {\#G} {|\Omega|}} \varphi_\xi(x) = \frac 1 {\sqrt{\#G|\Omega|}} \sum_{g \in G} \sgn(g) e^{2\pi i x \cdot g\xi},
    \]
    one for each orbit $G\xi$, $\xi \in \widehat \Lambda$ and $|2\pi \xi| = \lambda$, for which the action of $G$ on $G\xi$ is free (i.e. if $g \in G$ fixes any point in $G\xi$, then $g$ is the identity).
\end{lemma}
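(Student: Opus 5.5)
Here is the plan. I will work directly with the functions $\varphi_\xi$ from Lemma \ref{lem: automorphic basis} and carry out three steps: understand how $\varphi_\xi$ depends on the orbit $G\xi$, identify which orbits give zero, and compute the inner products, including the normalization.

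\emph{Step 1 (dependence on the orbit).} Reindexing the sum $g \mapsto gh$ gives
\[
\varphi_{h\xi}(x) = \frac{1}{\#G}\sum_{g} \sgn(g)e^{2\pi i x\cdot gh\xi} = \sgn(h)\varphi_\xi(x).
\]
So, up to sign, $\varphi_\xi$ depends only on the orbit $G\xi$. Also, $h\xi \in \widehat\Lambda$ and $|h\xi| = |\xi|$, because $G$ preserves $\Lambda$ (by conjugation inside $R = \Lambda G$) and hence preserves the dual lattice. Consequently $E_\lambda$ is spanned by one $\varphi_\xi$ per orbit $G\xi \subset \{|2\pi\xi| = \lambda\}$.

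\emph{Step 2 (non-free orbits vanish).} Suppose the action on $G\xi$ is not free, so some $g \neq I$ fixes some $g'\xi$. Replacing $\xi$ by $g'\xi$ only changes the sign, so assume $g\xi = \xi$. The stabilizer of a nonzero vector in a finite subgroup of $O(\R^2)$ contains no nontrivial rotation, so $g$ is a reflection and $\sgn(g) = -1$. Step 1 then gives $\varphi_\xi = \varphi_{g\xi} = -\varphi_\xi$, so $\varphi_\xi = 0$. The case $\xi = 0$ is handled the same way, since $G$ contains reflections. Hence only free orbits contribute to the spanning set.

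\emph{Step 3 (orthogonality and normalization).} This is the step that needs the most care. For free orbits, the vectors $g\xi$ are distinct as $g$ ranges over $G$, so $\varphi_\xi$ is a combination of $\#G$ distinct characters of $\R^2/\Lambda$. Because $\varphi_\xi\overline{\varphi_\eta}$ is $R$-invariant (it is odd times odd), the integral over $\Omega$ equals $\frac{1}{\#G}$ times the integral over $\tilde\Omega$; the paper uses exactly this when passing from $\tilde\Omega$ to $\Omega$, and it relies on $|\tilde\Omega| = \#G|\Omega|$. I then apply the character orthogonality
\[
\frac{1}{|\tilde\Omega|}\int_{\tilde\Omega} e^{2\pi i y\cdot(\alpha-\beta)}\,dy = \delta_{\alpha\beta}, \qquad \alpha,\beta \in \widehat\Lambda,
\]
from Corollary \ref{cor: lattice fourier series}. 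Distinct orbits share no characters, so the corresponding $\varphi$'s are orthogonal. For a single free orbit the double sum has only diagonal terms, which gives
\[
\int_{\tilde\Omega} |\varphi_\xi|^2 = \frac{|\tilde\Omega|}{\#G}, \qquad \text{hence} \qquad \int_\Omega |\varphi_\xi|^2 = \frac{|\Omega|}{\#G}.
\]
Therefore $\sqrt{\#G/|\Omega|}\,\varphi_\xi$ has unit norm, which gives the stated orthonormal (in particular orthogonal) basis of $E_\lambda$.
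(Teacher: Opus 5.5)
Your proposal is correct and follows the paper's proof essentially step for step: the sign rule $\varphi_{h\xi} = \sgn(h)\varphi_\xi$; the vanishing of $\varphi_\xi$ on non-free orbits, because a nontrivial element stabilizing a nonzero vector must be a reflection (you move $\xi$ within its orbit where the paper conjugates the stabilizing element, which amounts to the same thing); and orthogonality by passing from $\Omega$ to $\tilde\Omega$ and applying character orthogonality. Your explicit norm computation $\int_\Omega |\varphi_\xi|^2 = |\Omega|/\#G$ gives the correct constant and matches the stated normalization $\sqrt{\#G/|\Omega|}$; the paper's displayed chain drops a factor of $(\#G)^2$ when passing to $\tilde\Omega$ and ends at $\pm\#G|\Omega|$, though orthogonality is unaffected.
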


\begin{proof}  
    First assume $G$ does not act freely on some orbit $G\xi$, so that there exists $g \in G \setminus \{I\}$ and $\eta \in G\xi$ for which $g\eta = \eta$. We claim there exists an element $h \in G$ with $\sgn(h) = -1$ that fixes $\xi$. If $\xi = 0$, the claim holds trivially, so assume $\xi \neq 0$, and hence $\eta \neq 0$. Let $k \in G$ be such that $k \xi = \eta$. Then,
    \[
        \xi = k^{-1}\eta = k^{-1}g\eta = k^{-1} g k \xi,
    \]
    so we may take $h = k^{-1} g k$. If $\sgn(h) = 1$, then $h$ is a rotation about the origin and fixes $\xi$ if and only if $h = I$, i.e. $g = I$, which is prohibited by assumption. Hence, $\sgn(h) = -1$.
    
    From the claim, it follows that when $G$ does not act freely on $G\xi$, $\varphi_\xi = 0$. Hence,
    \[
        E_\lambda = \operatorname{span} \{\varphi_\xi : \xi \in \widehat \Lambda, \ |2\pi \xi| = \lambda, \ \text{$G$ acts freely on $G\xi$}\}.
    \]
    Now suppose $\xi,\eta \in \widehat \Lambda$ with $|2\pi \xi| = |2\pi \eta| = \lambda$ and that $G$ acts freely on the orbits of $\xi$ and $\eta$. If $\xi$ and $\eta$ belong to the same orbit,
    \[
        \varphi_\xi = \pm \varphi_\eta,
    \]
    and if $\xi$ and $\eta$ belong to distinct orbits,
    \begin{align*}
        \int_\Omega \varphi_\xi(x) \overline{\varphi_\eta(x)} \, dx &= \frac{1}{(\# G)^2} \int_\Omega \sum_{g,h \in G} \sgn(g) \sgn(h) e^{2\pi i x \cdot (g \xi - h \eta)} \, dx \\
        &= \frac{1}{\# G} \int_{\tilde \Omega} \sum_{g,h \in G} \sgn(g) \sgn(h) e^{2\pi i x \cdot (g \xi - h \eta)} \, dx \\
        &= \frac{1}{\# G} \sum_{g,h \in G} \sgn(g) \sgn(h) \int_{\tilde \Omega} e^{2\pi i x \cdot (g \xi - h \eta)} \, dx \\
        &= \frac {|\tilde \Omega|} {\# G} \sum_{g,h \in G} \sgn(g) \sgn(h) \1_{g\xi = h\eta} \\
        &= \frac {|\tilde \Omega|} {\# G} \sum_{g,h \in G} \sgn(g) \sgn(gh) \1_{g\xi = gh\eta} \\
        &= |\tilde \Omega| \sum_{h \in G} \sgn(h) \1_{\xi = h\eta} \\
        &= \begin{cases}
            \pm \# G |\Omega|, & G\xi = G\eta, \\
            0 & G\xi \neq G\eta,
        \end{cases}
    \end{align*}
    where $\tilde \Omega$ is the square or hexagonal fundamental domain of the lattice $\Lambda$ depicted in Figure \ref{fig: triangles in standard position}, and where $\pm$ in the last line matches $\sgn(h)$ for which $\xi = h\eta$. The lemma follows.
\end{proof}

The lemma shows, directly, that $E_\lambda$ is finite-dimensional, and that, since $\widehat \Lambda$ is countable, there are countably many nontrivial spaces $E_\lambda$. We also note that each $E_\lambda$ consists of Laplace eigenfunctions of eigenvalue $-\lambda^2$ and satisfying Dirichlet boundary conditions. We let $\{e_j : j \in \N\}$ be the concatenation of orthonormalized basis elements for these spaces as in Lemma \ref{lem: orthonormal automorphic basis}, with the convention $e_j \in E_{\lambda_j}$. Lemma \ref{lem: automorphic basis} ensures $\{e_j : j \in \N\}$ is an orthonormal basis for $L^2(\Omega)$. Since we have relatively explicit forms for the eigenbasis elements $e_j$, we may calculate and write down the first few terms of the local Weyl counting function \eqref{def: local Weyl counting function},
\[
    N(x,\lambda) = \sum_{\lambda_j \leq \lambda} |e_j(x)|^2.
\]
We do this now for the 45-45-90 and 30-60-90 triangles.

\begin{lemma}\label{lem: 45-45-90 eigenfunction}
    The Laplace eigenfunctions on the 45-45-90 triangle $\Omega$ in standard position with Dirichlet boundary conditions are given by
    \begin{equation}\label{eq: 45-45-90 eigenfunction}
        \varphi_\xi(x_1,x_2) = \frac{1}{2}\left[\sin(2\pi n_2 x_1)\sin(2\pi n_1 x_2) - \sin(2\pi n_1 x_1)\sin(2\pi n_2 x_2)\right],
    \end{equation}
    indexed by $(n_1,n_2)\in\Z^2$, with eigenvalue $-\lambda_\xi^2$ where $\lambda_\xi^2 = 4\pi^2(n_1^2+n_2^2)$.
\end{lemma}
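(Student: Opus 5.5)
The plan is to specialize Lemma~\ref{lem: automorphic basis} to the data of Proposition~\ref{prop: structure of reflection group}(1). That lemma already gives everything needed for an eigenfunction: each
\[
    \varphi_\xi(x) = \frac{1}{\#G}\sum_{g\in G}\sgn(g)\,e^{2\pi i x\cdot g\xi}
\]
is Dirichlet on $\Omega$, satisfies $\Delta\varphi_\xi = -|2\pi\xi|^2\varphi_\xi$, and the family spans $L^2(\Omega)$. So the only real work is to evaluate this sum for the dihedral group of order $8$ and recognize the closed form \eqref{eq: 45-45-90 eigenfunction}.

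First I will list $G=\langle\sigma,\tau\rangle$ explicitly as signed permutation matrices with their determinants. There are four diagonal matrices $\mathrm{diag}(\epsilon_1,\epsilon_2)$, with $\epsilon_i\in\{\pm1\}$ and $\sgn=\epsilon_1\epsilon_2$. There are four anti-diagonal matrices $\begin{bmatrix}0&\epsilon_1\\ \epsilon_2&0\end{bmatrix}$, with $\sgn=-\epsilon_1\epsilon_2$. Writing $\xi=(n_1,n_2)$, the images $g\xi$ are $(\epsilon_1 n_1,\epsilon_2 n_2)$ for the diagonal elements and $(\epsilon_1 n_2,\epsilon_2 n_1)$ for the anti-diagonal ones.

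Next I split the sum into these two blocks of four. Each block factors as a product of two one-variable sums, using
\[
    \sum_{\epsilon=\pm1}\epsilon\,e^{2\pi i\epsilon a} = 2i\sin(2\pi a).
\]
The diagonal block gives $(2i)^2\sin(2\pi n_1x_1)\sin(2\pi n_2x_2)$. The anti-diagonal block gives $-(2i)^2\sin(2\pi n_2x_1)\sin(2\pi n_1x_2)$. Adding them and dividing by $\#G=8$ yields
\[
    \tfrac12\bigl[\sin(2\pi n_2x_1)\sin(2\pi n_1x_2)-\sin(2\pi n_1x_1)\sin(2\pi n_2x_2)\bigr].
\]
The eigenvalue is $-|2\pi\xi|^2=-4\pi^2(n_1^2+n_2^2)$, directly from Lemma~\ref{lem: automorphic basis}. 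That lemma also shows that these functions exhaust the eigenfunctions in the sense of spanning $L^2(\Omega)$, so every Dirichlet eigenfunction is a finite combination of these within its eigenspace.

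The main obstacle is bookkeeping of normalization rather than analysis. I must identify the dual lattice $\widehat\Lambda$ of $\Lambda=\sqrt2L\,\Z^2$ with the integer index $(n_1,n_2)$, which presupposes choosing the scale of the standard-position triangle so that $\widehat\Lambda=\Z^2$. For example, take the legs of length $1/2$, so that the square $\tilde\Omega$ has side $1$. With the figure's scaling (hypotenuse $1$), the same computation holds with $n_i$ replaced by $n_i/\sqrt2$. I will state this convention explicitly, and I will check the sign conventions for $\sgn$ on each group element carefully, since an error there would swap the two products or turn the difference into a sum. Finally, I will remark that $\varphi_\xi$ vanishes identically exactly when $n_1=0$, $n_2=0$, or $|n_1|=|n_2|$. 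This is consistent with Lemma~\ref{lem: orthonormal automorphic basis}, since these are precisely the $\xi$ fixed by a reflection in $G$.
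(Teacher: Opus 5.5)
Your proposal is correct and follows essentially the paper's route: you specialize Lemma~\ref{lem: automorphic basis} with $\widehat\Lambda=\Z^2$ and evaluate the signed sum over the dihedral group of order $8$. Your diagonal and anti-diagonal blocks are exactly the paper's $k=0$ and $k=1$ groupings $\{\pm\eta_k,\pm\tau\eta_k\}$; the only cosmetic difference is that you factor each block into one-variable odd sums, where the paper uses antipodal pairing and the sum-to-product identity. Your normalization caveat is also well taken, since the paper's $\Lambda=\Z^2$ corresponds to legs of length $\tfrac12$ (the scaling used in Proposition~\ref{prop: 45-45-90 triangle}), not to the hypotenuse-$1$ scaling of Figure~\ref{fig: triangles in standard position}.
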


\begin{proof}
By Lemma~\ref{lem: automorphic basis}, the eigenfunction associated to $\xi \in \widehat{\Lambda}$ is
\begin{equation}\label{eq: 45-45-90 varphi def}
    \varphi_\xi(x) = \frac{1}{8}\sum_{g\in G}\sgn(g)\,e^{2\pi i x\cdot g\xi}.
\end{equation}
The lattice is $\Lambda = \Z^2$, so $A = I$ and $\widehat{\Lambda} = \Z^2$. A general dual lattice element is $\xi = (n_1, n_2)$ with $n_1, n_2 \in \Z$.

Recall from Proposition~\ref{prop: structure of reflection group}(1) that $\sigma$ and $\tau$ act on $\R^2$ via the matrices
\[
    \sigma = \begin{bmatrix} 0 & -1 \\ 1 & 0 \end{bmatrix}, \qquad \tau = \begin{bmatrix} 1 & 0 \\ 0 & -1 \end{bmatrix}.
\]
Since $\sigma$ applies a rotation by $90^\circ$, we have $\sigma^2 = -I$, so $\sigma^{k+2}\xi = -\sigma^k\xi$. The four rotation images therefore pair antipodally:
\[
    \eta_0 = \xi = (n_1, n_2), \qquad \eta_1 = \sigma\xi = (-n_2, n_1),
\]
with $\sigma^2\xi = -\eta_0$ and $\sigma^3\xi = -\eta_1$.  The four reflection images $\{\sigma^k\tau\xi\}$ pair similarly, with
\[
    \tau\eta_0 = (n_1, -n_2), \qquad \tau\eta_1 = (-n_2, -n_1).
\]
We expand \eqref{eq: 45-45-90 varphi def} by separating the four rotations ($\sgn = +1$) from the four reflections ($\sgn = -1$):
\[
    8\,\varphi_\xi(x) = \sum_{k=0}^{3}e^{2\pi i x\cdot\sigma^k\xi} - \sum_{k=0}^{3}e^{2\pi i x\cdot\sigma^k\tau\xi}.
\]
Using the antipodal pairing $\sigma^{k+2}\xi = -\sigma^k\xi$, each sum collapses into two cosines:
\[
    8\,\varphi_\xi(x) = 2\sum_{k=0}^{1}\left[\cos(2\pi x\cdot\eta_k) - \cos(2\pi x\cdot\tau\eta_k)\right].
\]
We apply the nice sum-to-product identity
\[
\cos(u+v) - \cos(u-v) = -2\sin(u)\sin(v).
\]
For $k=0$, $\eta_0 = (n_1, n_2)$ and $\tau\eta_0 = (n_1, -n_2)$, so
\[
    \cos(2\pi(n_1 x_1 + n_2 x_2)) - \cos(2\pi(n_1 x_1 - n_2 x_2)) = -2\sin(2\pi n_1 x_1)\sin(2\pi n_2 x_2).
\]
For $k=1$, $\eta_1 = (-n_2, n_1)$ and $\tau\eta_1 = (-n_2, -n_1)$, so
\[
    \cos(2\pi(-n_2 x_1 + n_1 x_2)) - \cos(2\pi(-n_2 x_1 - n_1 x_2)) = 2\sin(2\pi n_2 x_1)\sin(2\pi n_1 x_2).
\]
Summing these two contributions yields
\[
    8\,\varphi_\xi(x) = 2\bigl[-2\sin(2\pi n_1 x_1)\sin(2\pi n_2 x_2) + 2\sin(2\pi n_2 x_1)\sin(2\pi n_1 x_2)\bigr],
\]
which gives \eqref{eq: 45-45-90 eigenfunction}. The eigenvalue follows from $\lambda_\xi^2 = |2\pi\xi|^2 = 4\pi^2(n_1^2+n_2^2)$.
\end{proof}

\begin{lemma}\label{lem: 30-60-90 eigenfunction}
    The Laplace eigenfunctions on the 30-60-90 triangle $\Omega$ in standard position with Dirichlet boundary conditions are given by
    \begin{align}\label{eq: 30-60-90 eigenfunction}
        \varphi_\xi(x_1,x_2) = -\frac{1}{3}\Biggl[
        &\sin\!\left(\frac{2\pi(n_1+n_2)\,x_1}{\sqrt{3}}\right)\sin\!\left(\frac{2\pi(n_1-n_2)\,x_2}{3}\right) \notag\\
        +\,&\sin\!\left(\frac{2\pi n_2\,x_1}{\sqrt{3}}\right)\sin\!\left(\frac{2\pi(2n_1+n_2)\,x_2}{3}\right) \notag\\
        -\,&\sin\!\left(\frac{2\pi n_1\,x_1}{\sqrt{3}}\right)\sin\!\left(\frac{2\pi(n_1+2n_2)\,x_2}{3}\right)
        \Biggr],
    \end{align}
    indexed by $(n_1,n_2)\in\Z^2$, with eigenvalue $-\lambda_\xi^2$ where
    \[
        \lambda_\xi^2 = \frac{16\pi^2}{9}(n_1^2+n_1n_2+n_2^2), \qquad \xi = \begin{bmatrix} \dfrac{1}{\sqrt{3}} & \dfrac{1}{\sqrt{3}} \\[6pt] \dfrac{1}{3} & -\dfrac{1}{3} \end{bmatrix} \begin{bmatrix}
            n_1 \\ n_2
        \end{bmatrix}
    \]
\end{lemma}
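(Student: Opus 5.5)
The plan is to mirror the proof of Lemma \ref{lem: 45-45-90 eigenfunction}: expand the average in Lemma \ref{lem: automorphic basis} over the order-$12$ dihedral group $G=\langle\sigma,\tau\rangle$ of Proposition \ref{prop: structure of reflection group}(3). I would pair each rotation with its antipode and each rotation image with its $\tau$-reflection, so the twelve exponentials collapse into three products of sines.

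\textbf{Step 1: the dual lattice.} In standard position the hypotenuse has length $L=1$, so $\Lambda=A\Z^2$ with
\[
A=\begin{bmatrix}\frac{\sqrt3}2&\frac{\sqrt3}2\\ \frac32&-\frac32\end{bmatrix}.
\]
I will compute $\widehat\Lambda=A^{-t}\Z^2$ directly. Since $\det A=-\tfrac{3\sqrt3}{2}$, one gets
\[
A^{-t}=\begin{bmatrix}\frac1{\sqrt3}&\frac1{\sqrt3}\\ \frac13&-\frac13\end{bmatrix},
\]
which is the matrix in the statement. A general frequency is therefore
\[
\xi=\Bigl(\tfrac{n_1+n_2}{\sqrt3},\tfrac{n_1-n_2}{3}\Bigr).
\]
Then
\[
|\xi|^2=\frac{3(n_1+n_2)^2+(n_1-n_2)^2}{9}=\frac49(n_1^2+n_1n_2+n_2^2),
\]
and Lemma \ref{lem: automorphic basis} gives $\lambda_\xi^2=|2\pi\xi|^2=\frac{16\pi^2}{9}(n_1^2+n_1n_2+n_2^2)$.

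\textbf{Step 2: collapsing the sum.} Since $\sigma$ is rotation by $60^\circ$, we have $\sigma^3=-I$, so $\sigma^{k+3}\xi=-\sigma^k\xi$. The same antipodal pairing holds for the reflection images $\sigma^k\tau\xi$.

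\begin{itemize}
\item Separating the six rotations ($\sgn=+1$) from the six reflections ($\sgn=-1$) gives
\[
12\,\varphi_\xi(x)=2\sum_{k=0}^{2}\bigl[\cos(2\pi x\cdot\eta_k)-\cos(2\pi x\cdot\tau\eta_k)\bigr],
\qquad \eta_k=\sigma^k\xi .
\]
Here I use that the set $\{\sigma^k\tau\xi\}$ equals $\{\tau\sigma^{-k}\xi\}$, and that $\{\sigma^{-k}\xi\}_{k=0}^{5}$ is again the full rotation orbit. Hence the reflection terms can be re-indexed as $\tau\eta_k$.
\item Because $\tau$ negates the second coordinate, writing $\eta_k=(a_k,b_k)$ and using $\cos(u+v)-\cos(u-v)=-2\sin u\sin v$ gives
\[
\cos(2\pi x\cdot\eta_k)-\cos(2\pi x\cdot\tau\eta_k)=-2\sin(2\pi a_kx_1)\sin(2\pi b_kx_2).
\]
\item Therefore
\[
\varphi_\xi=-\frac13\sum_{k=0}^2\sin(2\pi a_kx_1)\sin(2\pi b_kx_2).
\]
\end{itemize}

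\textbf{Step 3: the rotated frequencies.} It remains to compute $\eta_1$ and $\eta_2$ in terms of $(n_1,n_2)$.

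\begin{itemize}
\item Applying $\sigma$ to $\xi$ gives
\[
\sigma\xi=\Bigl(\tfrac{n_2}{\sqrt3},\tfrac{2n_1+n_2}{3}\Bigr).
\]
This is again of the form $\bigl(\frac{n_1'+n_2'}{\sqrt3},\frac{n_1'-n_2'}3\bigr)$ with $(n_1',n_2')=(n_1+n_2,-n_1)$.
\item Applying the same formula once more gives
\[
\sigma^2\xi=\Bigl(-\tfrac{n_1}{\sqrt3},\tfrac{n_1+2n_2}{3}\Bigr).
\]
The oddness of sine turns this into the term with a minus sign in \eqref{eq: 30-60-90 eigenfunction}.
\item Together with $\eta_0=\xi$, which gives the first term, this yields exactly the displayed formula.
\end{itemize}

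The main obstacle is bookkeeping rather than any conceptual step. I need to be sure the reflection images pair correctly with $\tau\eta_k$ for the chosen representatives, and that no sign is lost in the $\sigma^2\xi$ term. To guard against this, I would compute the single formula $\sigma\cdot(n_1,n_2)\mapsto(n_1+n_2,-n_1)$ and iterate it, rather than computing $\sigma^2$ separately.
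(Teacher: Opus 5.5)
Your proposal is correct and follows the paper's proof essentially step for step: you compute $\widehat\Lambda = A^{-t}\Z^2$, use $\sigma^3=-I$ to pair antipodal terms, collapse the rotation and reflection sums into three differences $\cos(2\pi x\cdot\eta_k)-\cos(2\pi x\cdot\tau\eta_k)$, apply $\cos(u+v)-\cos(u-v)=-2\sin u\sin v$, and substitute $\eta_0,\eta_1,\eta_2$. Two details are spelled out more explicitly than in the paper, though the argument is the same: you justify the reflection pairing via $\sigma^k\tau=\tau\sigma^{-k}$, and you obtain $\sigma^2\xi$ by iterating $(n_1,n_2)\mapsto(n_1+n_2,-n_1)$.
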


The results of this lemma follow from the work in \cite{doncheskiQuantumMechanicalAnalysis2002}, but we include an argument here for completeness.

\begin{proof}
By Lemma~\ref{lem: automorphic basis}, the eigenfunction associated to $\xi \in \widehat{\Lambda}$ is
\begin{equation}\label{eq: 30-60-90 varphi def}
    \varphi_\xi(x) = \frac{1}{12}\sum_{g\in G}\sgn(g)\,e^{2\pi i x\cdot g\xi}.
\end{equation}
First, we compute the dual lattice $\widehat{\Lambda} = A^{-t}\Z^2$. Since $\det A = -\frac{3\sqrt{3}}{2}$, we get
\[
    A^{-t} = \begin{bmatrix} \dfrac{1}{\sqrt{3}} & \dfrac{1}{\sqrt{3}} \\[6pt] \dfrac{1}{3} & -\dfrac{1}{3} \end{bmatrix},
\]
so a general element of $\widehat{\Lambda}$ is
\[
    \xi = A^{-t}\binom{n_1}{n_2} = \binom{(n_1+n_2)/\sqrt{3}}{(n_1-n_2)/3}, \qquad (n_1,n_2)\in\Z^2.
\]
Next, we determine the action of $G$ on the dual lattice. Recall from Proposition~\ref{prop: structure of reflection group}(3) that $\sigma$ and $\tau$ act on $\R^2$ via the matrices
\[
    \sigma = \begin{bmatrix} \frac{1}{2} & -\frac{\sqrt{3}}{2} \\ \frac{\sqrt{3}}{2} & \frac{1}{2} \end{bmatrix}, \qquad \tau = \begin{bmatrix} 1 & 0 \\ 0 & -1 \end{bmatrix}.
\]
Computing $\sigma\xi$ and $\sigma^2\xi$ directly, we get the following: 
\begin{equation}\label{eq: def eta_k}
    \sigma\xi = \left(\frac{n_2}{\sqrt{3}},\, \frac{2n_1+n_2}{3}\right), \qquad 
    \sigma^2\xi = \left(\frac{-n_1}{\sqrt{3}},\, \frac{n_1+2n_2}{3}\right).
\end{equation}
We have $\sigma^3 = -I$, so $\sigma^{k+3}\xi = -\sigma^k\xi$.  Setting $\eta_k = \sigma^k\xi$, the six rotation images pair antipodally as $\{\eta_k, -\eta_k\}$ for $k = 0,1,2$.  Since $\tau$ negates only the second component, the six reflection images $\{\sigma^k\tau\xi\}$ likewise pair antipodally as $\{\tau\eta_k, -\tau\eta_k\}$.

We expand \eqref{eq: 30-60-90 varphi def} by separating the six rotations from the six reflections:
\[
    12\,\varphi_\xi(x) = \sum_{k=0}^{5}e^{2\pi i x\cdot\sigma^k\xi} - \sum_{k=0}^{5}e^{2\pi i x\cdot\sigma^k\tau\xi}.
\]
Just like in the 45-45-90 case, we use the antipodal pairings to collapse the six-term sums into three-term sums of cosines:
\[
    12\,\varphi_\xi(x) = 2\sum_{k=0}^{2}\left[\cos(2\pi x\cdot\eta_k) - \cos(2\pi x\cdot\tau\eta_k)\right].
\]
We apply the sum-to-product identity $\cos(u+v) - \cos(u-v) = -2\sin(u)\sin(v)$ to each difference.  With $\eta_k = (\eta_k^{(1)}, \eta_k^{(2)})$ and $\tau\eta_k = (\eta_k^{(1)}, -\eta_k^{(2)})$, 
\[
    \cos(2\pi x\cdot\eta_k) - \cos(2\pi x\cdot\tau\eta_k) = -2\sin\!\left(2\pi\,\eta_k^{(1)}x_1\right)\sin\!\left(2\pi\,\eta_k^{(2)}x_2\right).
\]
Substituting back yields
\[
    12\,\varphi_\xi(x) = -4\sum_{k=0}^{2}\sin\!\left(2\pi\,\eta_k^{(1)}x_1\right)\sin\!\left(2\pi\,\eta_k^{(2)}x_2\right).
\]
Inserting the expressions for $\eta_0 = \xi$, $\eta_1 = \sigma \xi$, and $\eta_2 = \sigma^2 \xi$ from \eqref{eq: def eta_k} yields \eqref{eq: 30-60-90 eigenfunction}. The eigenvalue follows from $\Delta\varphi_\xi = -|2\pi\xi|^2\varphi_\xi$, namely
\[
    \lambda_\xi^2 = 4\pi^2\!\left(\frac{(n_1+n_2)^2}{3} + \frac{(n_1-n_2)^2}{9}\right) = \frac{16\pi^2}{9}(n_1^2+n_1n_2+n_2^2). 
    \qedhere
\]
\end{proof}

%%%%%%%%%%%%%%%%%%%%%%%%%
% Proof of main theorem %
%%%%%%%%%%%%%%%%%%%%%%%%%

\section{Proof of Theorem \ref{thm: main 1}}\label{sec: proof of main theorem}

The following lemma, along with Lemma \ref{lem: billiards}, ensures the shortest two altitudes measured from the sides of an equilateral triangle to a point in its interior are audible.

\begin{lemma}\label{lem: altitudes are audible}
    Let $\Omega$ be an equilateral triangle, let $x$ be a point in its interior, and let $a_1 \leq a_2 \leq a_3$ be the perpendicular distances from $x$ to the three sides of $\Omega$. With $\ell$ as in \eqref{eq: def looping spectrum}, we have
    \begin{equation}\label{eq: altitude looping spectrum}
        \ell(t,x) = -\,\#\{\, i \in \{1,2,3\} : 2a_i = t \,\}
        \qquad \text{ for every } 0 < t \leq 2a_2 .
    \end{equation}
    In particular, $\ell(\,\cdot\,,x)$ determines the two shortest distances $a_1$ and $a_2$:
    if $x'$ is another interior point of $\Omega$ with perpendicular distances
    $a_1' \leq a_2' \leq a_3'$ and $\ell(\,\cdot\,,x) = \ell(\,\cdot\,,x')$, then $a_1 = a_1'$
    and $a_2 = a_2'$.
\end{lemma}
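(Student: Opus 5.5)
Our plan is to compute $\ell(t,x)$ directly from the structure $R = \Lambda G$ of Proposition \ref{prop: structure of reflection group}, splitting the elements $g \in R$ with $|g\cdot x - x| = t \le 2a_2$ by their sign. The three side reflections $\rho_1,\rho_2,\rho_3$ are reflections, so $\sgn(\rho_i) = -1$, and $|\rho_i x - x| = 2a_i$. These contribute exactly $-\#\{i : 2a_i = t\}$ to $\ell(t,x)$. It remains to show that no other nontrivial $g \in R$ satisfies $0<|g x - x| \le 2a_2$. (The identity gives $t=0$, which is excluded.)

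\textbf{Rotations.} Every orientation-preserving element of $R$ is either a translation in $\Lambda$ or a rotation by $\pm 2\pi/3$ about a vertex of the tiling. For a translation, $|gx - x|$ is at least the minimal lattice length, which is $\sqrt 3 L$. On the other hand, $a_1+a_2+a_3 = h = \frac{\sqrt3}{2}L$, so $2a_2 \le h < \sqrt3 L$. For a rotation by $2\pi/3$ about a tiling vertex $v$, the displacement is $\sqrt3\,|x-v|$. Every tiling vertex $v$ satisfies $|x - v| \ge $ the distance from $x$ to the nearest vertex of $\Omega$. For a vertex $v$ of $\Omega$ we bound this distance from below by an altitude comparison. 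If $v$ is the vertex where sides $i$ and $j$ meet, then $a_i = |x-v|\sin\theta_i$ and $a_j = |x-v|\sin\theta_j$ with $\theta_i+\theta_j = \pi/3$. This gives $a_i + a_j \le |x-v|\cdot 2\sin(\pi/6)\cdot$(a factor at most $1$), roughly $a_i+a_j \le |x-v|$. Hence $\sqrt3|x-v| \ge \sqrt3(a_1+a_2) > 2a_2$ unless $a_1=0$, which cannot happen for interior $x$. We still need to check this inequality carefully, and also handle the tiling vertices outside $\Omega$. Those are farther away: they lie beyond a side, at distance at least $L$ minus something, which is comfortably large.

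\textbf{Other reflections and glide reflections.} The orientation-reversing elements of $R$ are reflections or glide reflections across lines of the tiling, or across the parallel mid-lines associated with glides. For a reflection in a tiling line $m$, $|gx - x| = 2\,\mathrm{dist}(x,m)$. Any tiling line other than the three sides of $\Omega$ either passes through a vertex of $\Omega$ along a direction entering a neighboring tile, or lies entirely outside $\Omega$. In the first case, $\mathrm{dist}(x,m)$ is again compared via angles to the two adjacent sides. Since $\Omega$ lies strictly within the $60^\circ$ wedge and $m$ lies outside it, the distance is at least $a_j$ for the far side, and we aim to show it exceeds $a_2$ strictly. For a glide reflection, the displacement is $\sqrt{(2d)^2 + s^2}$ with translation part $s \ge$ half the lattice length $\tfrac{\sqrt3}{2}L \ge h$, so it exceeds $2a_2$.

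\textbf{Main obstacle.} We expect the main obstacle to be making this case enumeration of $R$ rigorous and clean, in particular showing strict inequality for reflections in the tiling lines through the vertices of $\Omega$. The constraint $a_1+a_2+a_3=h$ is what makes $2a_2$ a safe cutoff, since $2a_2 \le h$, and we would lean on that identity throughout.

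Once the equation for $\ell$ holds, the final claim follows. The smallest $t>0$ with $\ell(t,x)\ne0$ is $2a_1$. If $\ell(2a_1,x)=-2$, then $a_2=a_1$. Otherwise, $a_2$ is read off as the next such $t$, which lies in the range where the formula is valid.
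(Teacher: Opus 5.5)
Your case split of $R$ into translations, rotations about tiling vertices, reflections in tiling lines and glide reflections is a workable alternative to the paper's argument. The translation and glide cases are correct: for a glide with vector $s\neq 0$, $g^2$ is translation by $2s\in\Lambda$, so $|gx-x|\ge |s|\ge \frac{\sqrt3}{2}L=h>2a_2$.

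\textbf{The rotation case fails as written.} You bound $|x-v|\ge a_i+a_j\ge a_1+a_2$ and then claim $\sqrt3(a_1+a_2)>2a_2$ whenever $a_1>0$. That inequality is equivalent to $\sqrt3\,a_1>(2-\sqrt3)\,a_2$, which is false when $a_1$ is small compared with $a_2$, e.g.\ for $x$ near the midpoint of the side realizing $a_1$. The loss is built into your estimate. Let $v$ be the common endpoint of the sides realizing $a_1$ and $a_2$. The rotation by $2\pi/3$ about $v$ is the product of the reflections in these two sides, so its displacement $\sqrt3|x-v|$ tends to $2a_2$ as $a_1\to0$. Any valid bound must be sharp in that limit, and $|x-v|\ge a_i+a_j$ is off by a factor $2/\sqrt3$ there.

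\textbf{Fix.} Bound each distance separately. If side $k$ passes through the vertex $v$, then $a_k=|x-v|\sin\theta_k$ with $0<\theta_k<\pi/3$, hence $\sqrt3|x-v|>2a_k$. Every vertex of $\Omega$ lies on one of the two sides at distances $a_2,a_3$, so this gives $\sqrt3|x-v|>2a_2$. Your nearest-vertex remark then extends the bound to all tiling vertices.

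\textbf{Other loose ends.} The remaining steps are only sketched, but they close easily.
\begin{enumerate}
\item For reflections: the tiling line through a vertex parallel to the opposite side $k$ lies at distance $h-a_k$ from $x$. By Viviani this is the sum of the other two distances, hence $>a_2$. Every other non-side tiling line lies at distance $\ge h>a_2$.
\item You should justify, from $\Omega$ being a fundamental domain, that rotation centers are tiling vertices and mirrors are tiling lines. Without this, displacements could be arbitrarily small.
\item In the final step the test should be $\ell(2a_1,x)\le -2$, since the value $-3$ occurs at the centroid.
\end{enumerate}

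\textbf{Comparison with the paper.} The paper never classifies the elements of $R$. It folds $\overline{P\,(g\cdot P)}$ into a billiard loop $PX_1\cdots X_nP$ and observes that, for $g$ not a side reflection, consecutive bounce points lie on distinct sides. So the loop touches one of the two sides realizing $a_2,a_3$, or a vertex, at some $X_k$. The triangle inequality then gives length at least $PX_k+X_kP\ge 2a_2$, with strictness in one of the two steps. This single uniform estimate automatically covers the rotation case you ran into, and it needs no structural facts about $R$.
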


\begin{proof}
Let $\Omega := \triangle ABC$ be our equilateral triangle and $P$ be a point in its interior. For a closed polygonal path
$P X_1 \cdots X_n P$, let
\[
    |P X_1 \cdots X_n P| = PX_1 + X_1X_2 + \cdots + X_{n-1}X_n + X_nP
\]
denote its total length. Let $D,E,F$ be the feet of the perpendiculars from $P$ to the
sides $\overline{AB}, \overline{BC}, \overline{CA}$, respectively. Relabelling the vertices, we may assume
\[
    PD \leq PE \leq PF, 
\]
giving us $a_1 = PD, a_2 = PE,$ and $a_3 = PF$ as in the lemma hypothesis.

By Proposition~\ref{prop: characterization of evenly tiling domains}, $\Omega$ is a
fundamental domain for $R$; hence the tiles $\{g\Omega : g \in R\}$ cover the plane with
pairwise disjoint interiors, and $g \mapsto g\Omega$ is a bijection from $R$ onto the set
of tiles. Since $P$ and $g\cdot P$ lie in the interiors of
$\Omega$ and $g\Omega$, the segment $\sigma_g = \overline{P\,(g\cdot P)}$ meets the edges of
the tiling in finitely many points; folding the successive tiles that $\sigma_g$ passes
through back onto $\Omega$ folds $\sigma_g$ into a closed polygonal path
\[
    \gamma_g = P X_1 X_2 \cdots X_n P,
\]
where $X_i \in \partial\Omega$, $n = \#\text{edges $\sigma_g$ passes through}$, and $|\gamma_g| = |g\cdot P - P|$.

\begin{figure}[h]
    \centering
    % Lemma 5.1 (altitudes are audible): a looping billiard trajectory that starts
    % and ends at P, shown two ways. LEFT: the straight segment from P to g.P in the
    % unfolded chain of reflected tiles T_0=Omega, T_1, T_2 (dashed creases are the
    % reflected edges). RIGHT: folding the tiles back onto Omega turns that straight
    % segment into the closed polygonal path P X_1 X_2 P, with X_1 in BC and X_2 in CA.
    \begin{tikzpicture}[scale=3.1, every node/.style={font=\footnotesize}]

      % ---------- LEFT: unfolded chain ----------
      \coordinate (A)  at (0,0);
      \coordinate (B)  at (1,0);
      \coordinate (C)  at (0.5,0.86603);
      \coordinate (A1) at (1.5,0.86603);
      \coordinate (B2) at (1,1.73205);

      % tiles
      \fill[gray!16] (A) -- (B) -- (C) -- cycle;   % T_0 = Omega
      \fill[gray!5]  (B) -- (C) -- (A1) -- cycle;  % T_1
      \fill[gray!5]  (C) -- (A1) -- (B2) -- cycle; % T_2

      % outer boundary (solid) and creases (dashed = reflected edges)
      \draw[thick] (A) -- (B) -- (A1) -- (B2) -- (C) -- cycle;
      \draw[gray, dashed] (B) -- (C);   % crease T_0|T_1
      \draw[gray, dashed] (C) -- (A1);  % crease T_1|T_2

      % unfolded trajectory: straight segment P -> g.P
      \coordinate (P)   at (0.42,0.20);
      \coordinate (gP)  at (1.11684,1.12976);
      \coordinate (X1u) at (0.68238,0.55012);
      \coordinate (X2u) at (0.91909,0.86603);
      \draw[red, thick] (P) -- (gP);
      \fill (P) circle (0.6pt);   \node[below] at (P) {$P$};
      \fill (gP) circle (0.6pt);  \node[right] at (gP) {$g\cdot P$};
      \fill[red] (X1u) circle (0.5pt); \node[below right=-1pt] at (X1u) {$X_1$};
      \fill[red] (X2u) circle (0.5pt); \node[above left=-1pt] at (X2u) {$X_2$};

      \node at (0.27,0.30) {$\Omega$};

      % ---------- fold arrow ----------
      \draw[-{Stealth}, thick] (1.7,0.55) -- node[above]{fold} (2.15,0.55);

      % ---------- RIGHT: folded path in Omega ----------
      \begin{scope}[shift={(2.3,0)}]
        \coordinate (A')  at (0,0);
        \coordinate (B')  at (1,0);
        \coordinate (C')  at (0.5,0.86603);
        \fill[gray!16] (A') -- (B') -- (C') -- cycle;
        \draw[thick] (A') -- (B') -- (C') -- cycle;

        \coordinate (Pp)  at (0.42,0.20);
        \coordinate (X1p) at (0.68238,0.55012); % on B'C'
        \coordinate (X2p) at (0.29040,0.50300); % on C'A'
        \draw[red, thick] (Pp) -- (X1p) -- (X2p) -- cycle;
        \fill (Pp) circle (0.6pt);  \node[below] at (Pp) {$P$};
        \fill[red] (X1p) circle (0.5pt); \node[right] at (X1p) {$X_1$};
        \fill[red] (X2p) circle (0.5pt); \node[left] at (X2p) {$X_2$};

        \node[below left]  at (A') {$A$};
        \node[below right] at (B') {$B$};
        \node[above]       at (C') {$C$};
        \node at (0.52,0.08) {$\Omega$};
      \end{scope}
    \end{tikzpicture}
    \caption{Unfolding a looping trajectory at $P$: the straight segment $\overline{P\,(g\cdot P)}$ through the reflected tiles (left) folds back onto $\Omega$ to give the closed polygonal path $\gamma_g = P X_1 X_2 P$ (right), with $X_1 \in \overline{BC}$ and $X_2 \in \overline{CA}$.}
    \label{fig: unfolding}
\end{figure}

We call $g$ \emph{regular} if the open segment $(P,\,g\cdot P)$ avoids the vertices of the tiling, and \emph{singular} otherwise. 

For a regular $g$ we record two facts about $\gamma_g = P X_1 \cdots X_n P$. Firstly, consecutive vertices lie on distinct sides. Between two consecutive edge crossings the segment runs through the interior of a tile, so $\overline{X_i X_{i+1}}$ passes through $\interior \Omega$ and $X_i, X_{i+1}$ must lie on different sides of $\Omega$. Secondly, $n = 1$ if and only if $g \in \{\rho_1, \rho_2, \rho_3\}$, where $\rho_1, \rho_2, \rho_3$ are the reflections across the lines containing $\overline{AB}, \overline{BC}, \overline{CA}$, respectively. A single edge crossing means $g\Omega$ is a tile sharing the crossed side with $\Omega$, namely $\rho_i \Omega$ for $i=1, 2, 3$.

We see that $\gamma_{\rho_1}, \gamma_{\rho_2}, \gamma_{\rho_3}$ are just $PDP$, $PEP$, $PFP$ and have lengths $2a_1, 2a_2, 2a_3$, respectively. We are now ready to show that all other paths have length greater than $2a_2$. Fix $g \in R \setminus \{e\}$ with $n \ge 2$, and let $|g \cdot P - P| = t$. If $g$ is regular, then for some $k$, $X_k \in \overline{BC} \cup \overline{CA}$. If $g$ is singular, then for some $k$, $X_k \in \{A, B, C\}$. In either case, we choose the particular $k$. Splitting $\gamma_g$ at $X_k$ and
applying the triangle inequality to each half,
\[
    t = |\gamma_g|
      = \bigl(PX_1 + \cdots + X_{k-1}X_k\bigr) + \bigl(X_kX_{k+1} + \cdots + X_nP\bigr) \ge PX_k + X_kP \ge 2a_2 .
\]
The first inequality is strict for regular $g$ and the second inequality is strict for singular $g$, so $t > 2a_2$. Thus, $a_1$ and $a_2$ are always audible.
\end{proof}

\begin{proposition} \label{prop: equilateral triangle}
    One can always hear where an equilateral triangle is struck, up to symmetry.
\end{proposition}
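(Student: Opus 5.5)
Suppose $x,y$ are interior points of the equilateral triangle $\Omega$ with $N(x,\lambda)=N(y,\lambda)$ for all $\lambda$. The plan is to recover the three perpendicular distances $a_1\le a_2\le a_3$ of $x$ from the counting function, and then observe that these distances pin $x$ down up to the dihedral symmetry group of $\Omega$.

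First, Lemma~\ref{lem: billiards} gives $\ell(\cdot,x)=\ell(\cdot,y)$. Lemma~\ref{lem: altitudes are audible} then gives $a_1=a_1'$ and $a_2=a_2'$, where the $a_i'$ are the distances for $y$.

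Next we recover $a_3$. For any interior point of an equilateral triangle with altitude $h$, the sum of the three perpendicular distances is constant (Viviani's theorem):
\[
a_1+a_2+a_3=h,
\]
which follows by splitting $\Omega$ into three triangles with apex at the point and comparing areas. Since $h$ depends only on $\Omega$, we get $a_3=h-a_1-a_2=a_3'$. So $x$ and $y$ have the same unordered multiset of distances to the sides.

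Finally we produce the symmetry. Choose a symmetry $\phi$ of $\Omega$ that permutes the sides so that the side at distance $a_i$ from $x$ goes to the side at distance $a_i$ from $y$. This is possible because the symmetry group of $\Omega$ is the dihedral group of order $6$, which acts as the full symmetric group on the three sides. If some $a_i$ coincide, any matching choice works.

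It remains to show $\phi(x)=y$. A point of the plane is determined by its signed distances to two non-parallel lines. Interior points have positive distances to all three sides, so the distances of $\phi(x)$ to the sides of $\Omega$ agree with those of $y$. Hence $\phi(x)=y$.

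The only subtlety is that Lemma~\ref{lem: altitudes are audible} yields just two of the three distances; Viviani's identity is exactly what recovers the third. The rest is routine bookkeeping.
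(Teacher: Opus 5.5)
Your proposal is correct and follows the paper's proof essentially step for step. Lemma~\ref{lem: billiards} gives equality of $\ell$, Lemma~\ref{lem: altitudes are audible} recovers $a_1,a_2$, Viviani's theorem recovers $a_3$, and the dihedral symmetry group realizes every permutation of the three sides. Your added remark, that an interior point is determined by its signed distances to two non-parallel sides, just spells out the final step the paper states without justification.
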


\begin{proof}
We fix an equilateral triangle $T$ and two distinct points $P, Q$ in its interior with $N(P,\lambda) = N(Q,\lambda)$ for all $\lambda \geq 0$. By Lemma~\ref{lem: billiards}, $\ell(t, P) = \ell(t, Q)$ for all $t > 0$, where $\ell$ as in \eqref{eq: def looping spectrum}. Let $p_1 \le p_2 \le p_3$ be the distances from $P$ to the three sides of $T$, and let $q_1 \le q_2 \le q_3$ be those from $Q$. By Lemma~\ref{lem: altitudes are audible}, $(p_1, p_2) = (q_1, q_2)$. By Viviani's theorem $p_1 + p_2 + p_3 = q_1 + q_2 + q_3$ , so $p_3 = q_3$ as well. Thus $P$ and $Q$ have the same multiset of distances to the sides of $T$. Since the symmetry group of $T$ realizes every permutation of its three sides, $P$ and $Q$ are equal up to a symmetry of $T$.
\end{proof}

\begin{proposition} \label{prop: 45-45-90 triangle}
    One can always hear where a 45-45-90 triangle is struck, up to symmetry.
\end{proposition}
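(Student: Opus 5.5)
The plan is to follow the proof of Proposition \ref{prop: equilateral triangle}: extract the two shortest perpendicular distances from the looping spectrum, then break the remaining finite ambiguity with the explicit eigenfunctions of Lemma \ref{lem: 45-45-90 eigenfunction}.

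Put $\Omega$ in standard position and let $P,Q$ be interior points with $N(P,\cdot)=N(Q,\cdot)$. By Lemma \ref{lem: billiards}, $\ell(\cdot,P)=\ell(\cdot,Q)$. For an interior point write $u,v$ for its distances to the two legs and $w$ for its distance to the hypotenuse. Computing the area in two ways gives the linear relation
\[
u+v+\sqrt2\,w=s,
\]
where $s$ is the leg length. This replaces Viviani's theorem. The reflection swapping the two legs is the only nontrivial symmetry, so it suffices to show that the pair $\{u,v\}$ together with $w$ is determined.

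\textbf{Step 1 (analogue of Lemma \ref{lem: altitudes are audible}).} I will show that $\ell(t,x)=-\#\{\text{distances } d\in\{u,v,w\}: 2d=t\}$ for $0<t\le 2d_2$, where $d_1\le d_2\le d_3$ is the sorted multiset $\{u,v,w\}$. The argument is the folding one used in the equilateral case. An unfolded segment from $P$ to $g\cdot P$ with $g$ not a side reflection folds to a closed path with at least two boundary hits. Either it touches a vertex, or it touches a side other than the nearest one. In both cases the triangle inequality gives length $>2d_2$, since the distance to any vertex is at least the distance to each adjacent side. No new ingredients are needed here, because the proof used only the tiling and the triangle inequality. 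Hence $(d_1,d_2)$ is audible.

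\textbf{Step 2 (finite ambiguity).} Knowing $d_1,d_2$ but not which sides realize them leaves, up to the leg swap, three candidate configurations:
\begin{enumerate}
\item $\{u,v\}=\{d_1,d_2\}$, with $w=(s-d_1-d_2)/\sqrt2$;
\item $u=d_1$, $w=d_2$, with $v=s-d_1-\sqrt2 d_2$;
\item $u=d_2$, $w=d_1$, with $v=s-d_2-\sqrt2 d_1$.
\end{enumerate}
Each candidate survives only if its computed third distance is at least $d_2$. So $P$ and $Q$ are each one of these at most three points, and I must rule out that they are two different surviving candidates.

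\textbf{Step 3 (distinguishing candidates).} First I will try to extend Step 1 to read off more of $\ell$. The rotation by $\pi$ about the right-angle vertex has $\sgn=+1$ and loop length $2\sqrt{u^2+v^2}$. The rotations by $\pm\pi/2$ about the acute vertices have $\sgn=+1$ and lengths $\sqrt2$ times the distance to those vertices. Positive contributions to $\ell$ can only come from such rotations (or translations), whereas the side reflections contribute $-1$. So the smallest $t$ with $\ell(t,x)>0$ should be audible, and this $t$ is an explicit function of $(u,v,w)$. Checking that this value differs across the surviving candidates is a case analysis in the inequalities among $d_1,d_2,d_3$.

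The main obstacle is that positive and negative terms can cancel at the same $t$, so this might fail on special loci. As a fallback on those loci I will use the spectral side directly. The first jump of $N(x,\cdot)$ equals $\frac{8}{|\Omega|}|\varphi_{(1,2)}(x)|^2$, and the first eigenspace is simple. So $|\varphi_{(1,2)}(P)|=|\varphi_{(1,2)}(Q)|$, and likewise for the next few simple eigenvalues. Writing these trigonometric polynomials of Lemma \ref{lem: 45-45-90 eigenfunction} in terms of $(d_1,d_2)$ for each candidate, I expect to show that equality forces the same candidate. Concretely, the difference should factor through the leg-swap symmetry, with the remaining factor nonvanishing on the admissible region. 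This trigonometric verification is where I expect most of the work to lie, and I would relegate it to an appendix-style computation.
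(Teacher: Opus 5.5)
\textbf{There is a genuine gap in Step 3.} Your Steps 1 and 2 are sound. The folding argument of Lemma~\ref{lem: altitudes are audible} carries over verbatim to the 45-45-90 tiling, and together with $u+v+\sqrt2\,w=s$ it leaves at most three candidates up to the leg swap. But Step 3 is where all the difficulty lies, and its main mechanism fails on an open set, not merely on cancellation loci.

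Let $B$ be the acute vertex where the leg $\{u=0\}$ meets the hypotenuse. Your candidates (2) and (3) are mirror images under the reflection in the bisector of the $45^\circ$ angle at $B$. All eight sectors at $B$ are $45^\circ$, so this reflection permutes the four tiling lines through $B$. Hence every element of the stabilizer of $B$ in $R$ gives the same loop length and sign for both candidates; for instance $|PB|^2=2(d_1^2+\sqrt2\,d_1d_2+d_2^2)$ in both cases.

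Concretely, take $s=1$ and $(d_1,d_2)=(0.1,0.2)$. The points $(u,v,w)=(0.1,\,0.9-0.2\sqrt2,\,0.2)$ and $(0.2,\,0.8-0.1\sqrt2,\,0.1)$ are both admissible and not related by the leg swap. For both, the smallest $t$ with $\ell(t,\cdot)>0$ is $\sqrt2\,|PB|\approx 0.56$, where $\ell=2$. The competitors are larger: $2\sqrt{u^2+v^2}\approx 1.25,\ 1.38$, and $\sqrt2$ times the distance to the other acute vertex is $\approx 1.54,\ 1.47$. In fact the two looping spectra agree for all $t\le 1.2$. They first differ at the reflection in the far leg, $2v\approx 1.23$ versus $1.32$. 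So the case analysis you describe cannot succeed for points near $B$. Separating these two candidates via $\ell$ would require controlling cancellations among loops that reach the far side of the triangle, which the proposal does not address.

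Consequently your fallback is not a patch on exceptional loci. It would have to carry the entire argument on an open region, and as written it is only an expectation. That spectral computation is exactly the content of the paper's proof, which uses no billiard input at all. After reducing to the half-triangle $\Omega_0$ by the symmetry, the paper uses the simple eigenvalues attached to $\xi=(2,1),(3,1),(3,2),(4,2)$ and the coordinates $a=\cos(2\pi p_1)$, $b=\cos(2\pi p_2)$. In these coordinates $|\varphi_\xi(p)|^2$ is a constant times $(1-a^2)(1-b^2)(b-a)^2$, multiplied respectively by $1$, $(a+b)^2$, $(4ab+1)^2$ and $a^2b^2(a+b)^2$. Equating these at $p$ and $q$ gives $ab=cd$ and $a^2+b^2=c^2+d^2$ off the bisector, plus a short separate computation on it, and hence $p=q$.

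Parametrizing each candidate by $(d_1,d_2)$, as you propose, puts $\sqrt2$-weighted combinations such as $s-d_1-\sqrt2\,d_2$ inside the trigonometric arguments, which makes that algebra harder rather than easier. Also, your expectation that the difference ``factors through the leg-swap symmetry'' cannot apply to candidates (2) and (3). They are related by the bisector reflection at $B$, not by a symmetry of $\Omega$.
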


\begin{proof}
Let our domain $\Omega$ be the 45-45-90 triangle with vertices $(0,0)$, $(\frac{1}{2},0)$, and $(\frac{1}{2},\frac{1}{2})$. Let $L$ be the intersection of $\interior \Omega$ and the right-angle bisector of $\Omega$. Let $\Omega_0$ be the triangle with vertices $(0,0)$, $(\frac{1}{4},\frac{1}{4})$, and $(\frac{1}{2},0)$; see Figure~\ref{fig: 45-45-90 triangle 1}.

\begin{figure}[h]
    \centering
    % The 45-45-90 triangle Omega with sub-region Omega_0 (bottom-left half)
    % shaded red, divided from its mirror image by the dotted line L (the bisector
    % of the right angle / axis of symmetry).
    \begin{tikzpicture}[scale=9, every node/.style={font=\small}]
      \coordinate (A) at (0,0);
      \coordinate (B) at (0.5,0);
      \coordinate (C) at (0.5,0.5);
      \coordinate (M) at (0.25,0.25); % midpoint of hypotenuse = far end of L

      % Omega_0 : bottom-left half, shaded red
      \fill[red!15] (A) -- (B) -- (M) -- cycle;

      % the triangle Omega
      \draw[thick] (A) -- (B) -- (C) -- cycle;

      % dividing line L (blue, dotted)
      \draw[blue, dotted, thick] (B) -- (M);
      \node[blue] at (0.40,0.145) {$L$};

      % region label
      \node[red!70!black] at (0.26,0.06) {$\interior \Omega_0$};
    \end{tikzpicture}
    \caption{The region $\interior \Omega_0$ is shaded red, and the bisector $L$ of the right angle is the blue dotted line.}
    \label{fig: 45-45-90 triangle 1}
\end{figure}

The reflection $\phi \colon \Omega \to \Omega$ across $L$, defined by $\phi(x_1,x_2) = (\frac{1}{2}-x_2, \frac{1}{2}-x_1)$, is an isometry of $\Omega$. Since isometries preserve the local Weyl counting function, it suffices to show uniqueness within $\Omega_0$.

Let $p, q \in \interior \Omega_0 \cup L$ be distinct points with $N(p,\lambda) = N(q,\lambda)$ for all $\lambda \geq 0$. Write $p = (p_1,p_2)$ and $q = (q_1,q_2)$, and introduce the substitutions
\[
    a = \cos(2\pi p_1), \quad b = \cos(2\pi p_2), \quad c = \cos(2\pi q_1), \quad d = \cos(2\pi q_2).
\]

\begin{figure}[h]
    \centering
    % Parameter-space picture. Under a=cos(2*pi*p1), b=cos(2*pi*p2), the closed
    % sub-region Omega_0 maps to the triangle { |a| <= b, 0<b<1 } in the (a,b)-plane.
    % Edge a=b <-> hypotenuse; edge a=-b <-> the right-angle bisector L; b=1 <-> leg.
    \begin{tikzpicture}[scale=4.2, every node/.style={font=\small}]
      % axes
      \draw[->,gray] (-1.15,0) -- (1.15,0) node[right]{$a$};
      \draw[->,gray] (0,-0.1) -- (0,1.2) node[above]{$b$};

      % the region |a| <= b, 0<b<1
      \fill[red!12] (0,0) -- (1,1) -- (-1,1) -- cycle;

      % edges
      \draw[blue, dotted, thick] (0,0) -- (-1,1); % a=-b : bisector L
      \draw[thick] (0,0) -- (1,1);               % a= b : hypotenuse
      \draw[thick] (-1,1) -- (1,1);              % b= 1 : leg

      % boundary labels (equations only)
      \node[below left]  at (-0.55,0.60) {$a=-b$};
      \node[right] at ( 0.55,0.55) {$a=b$};
      \node[above right] at (0,1.0)      {$b=1$};
    \end{tikzpicture}
    \caption{The image of $\Omega_0$ in the $(a,b)$-plane under $a=\cos(2\pi p_1)$, $b=\cos(2\pi p_2)$: the triangle $|a|\le b$, $0<b<1$. The edges $a=b$ (hypotenuse) and $b=1$ (leg $x_2=0$) lie in $\partial\Omega$, while the blue dotted edge $a=-b$ is the bisector $L$.}
    \label{fig: 45-45-90 parameter space}
\end{figure}

Within $\Omega_0$, this substitution is one-to-one.  Moreover $a,c \in (-1,1)$, $b,d \in (0,1)$, $|a| \le b$, and $|c| \le d$, with equality $|a|=b$ (equivalently $a=-b$) precisely when $p \in L$, and likewise $|c|=d$ precisely when $q \in L$.

Using the eigenfunctions from Lemma~\ref{lem: 45-45-90 eigenfunction}, we note that $|e_{(n_1,n_2)}(p)|^2 = |\varphi_{(n_1,n_2)}(p)|^2$ depends only on $\{n_1^2, n_2^2\}$ as a set, so for any $\lambda$ the sum $N^*(p,\lambda) := \sum_{\lambda_\xi = \lambda}|e_\xi(p)|^2$ receives equal contributions from all $\xi$ in the set $\{(\pm n_1, \pm n_2), (\pm n_2, \pm n_1)\}$.  For generic $\lambda$ this gives a multiplicity of $8$.

We work with four specific eigenvalues $\lambda_1 < \lambda_2 < \lambda_3 < \lambda_4$ associated to $\xi = (2,1), (3,1), (3,2), (4,2)$ respectively. By the computation in Lemma~\ref{lem: 45-45-90 N* computations}, for $\xi = (2,1)$ we have
\[
    N^*(p,\lambda_1) = 32(1-a^2)(1-b^2)(b-a)^2.
\]
Equating to $N^*(q,\lambda_1)$ gives our first equation:
\begin{align}
    (1-a^2)(1-b^2)(b-a)^2 = (1-c^2)(1-d^2)(d-c)^2. \label{eq:E1}
\end{align}
Note the common factor $(1-a^2)(1-b^2)(b-a)^2$ is nonzero on $\Omega_0$ since $a \in (-1,1)$, $b \in (0,1)$, and $b > a$; likewise for $(1-c^2)(1-d^2)(d-c)^2$. Hence the divisions by \eqref{eq:E1} below are valid, including when $p$ or $q$ lies on $L$.

By the computation in Lemma~\ref{lem: 45-45-90 N* computations}, for $\xi = (3,1)$ we have
\[
    N^*(p,\lambda_2) = 128(1-a^2)(1-b^2)(b-a)^2(a+b)^2.
\]
Equating to $N^*(q,\lambda_2)$ and dividing by \eqref{eq:E1}:
\begin{align}
    (a+b)^2 = (c+d)^2. \label{eq:E2}
\end{align}
By the computation in Lemma~\ref{lem: 45-45-90 N* computations}, for $\xi = (3,2)$ we have
\[
    N^*(p,\lambda_3) = 32(1-a^2)(1-b^2)(b-a)^2(4ab+1)^2.
\]
Equating to $N^*(q,\lambda_3)$ and dividing by \eqref{eq:E1}:
\begin{align}
    (4ab+1)^2 = (4cd+1)^2. \label{eq:E3}
\end{align}
We now split into two cases according to whether the points lie on $L$. Equation \eqref{eq:E2} forces $a+b = 0$ if and only if $c+d = 0$. Thus either both points lie on $L$, or both lie in $\interior \Omega_0$.

\medskip
\noindent\textbf{Case 1: $p$ and $q$ lie in the interior of $\Omega_0$.} Here $|a| < b$ and $|c| < d$, so $a+b > 0$ and $c+d > 0$. By the computation in Lemma~\ref{lem: 45-45-90 N* computations}, for $\xi = (4,2)$ we have
\[
    N^*(p,\lambda_4) = 2048(1-a^2)a^2(1-b^2)b^2(b-a)^2(a+b)^2.
\]
Equating to $N^*(q,\lambda_4)$ and dividing by \eqref{eq:E1} and \eqref{eq:E2}:
\begin{align}
    a^2b^2 = c^2d^2. \label{eq:E4}
\end{align}
Expanding \eqref{eq:E3} and substituting in \eqref{eq:E4} yields
\begin{align}
    ab = cd. \label{eq:E5}
\end{align}
Expanding \eqref{eq:E2} and substituting in \eqref{eq:E5} yields
\begin{align}
    a^2+b^2 = c^2+d^2. \label{eq:E6}
\end{align}
Plugging in $b = cd/a$ gotten from \eqref{eq:E5} gives
\begin{align}
    a^4 - (c^2+d^2)a^2 + c^2d^2 = 0,
\end{align}
whose solutions are $a^2 = c^2$ or $a^2 = d^2$.

This yields eight candidate relationships between $a,b,c,d$. Checking against the constraints $ab = cd$, $b > 0$, $d > 0$, and the strict inequality $|a|+|c| < b+d$  eliminates all but $a = c$, $b = d$.  Since the substitution is one-to-one, we conclude $p = q$, a contradiction.

\medskip
\noindent\textbf{Case 2: $p$ and $q$ lie on $L$.} Here $a = -b$ and $c = -d$, so $a+b = c+d = 0$. In this case, we only use \eqref{eq:E1} and \eqref{eq:E3}. Substituting $b = -a$, $d = -c$ and writing $s = a^2$, $r = c^2 \in (0,1)$, equation \eqref{eq:E1} becomes
\[
    s(1-s)^2 = r(1-r)^2,
\]
while \eqref{eq:E3}, using $4ab+1 = 1-4a^2$, becomes
\[
    (1-4s)^2 = (1-4r)^2, \qquad \text{so} \qquad s = r \ \text{ or } \ s+r = \tfrac{1}{2}.
\]
In the latter case, writing $s = \tfrac{1}{4}+e$ and $r = \tfrac{1}{4}-e$, the first identity gives
\[
    s(1-s)^2 - r(1-r)^2 = 2e\bigl(\tfrac{3}{16}+e^2\bigr) = 0,
\]
which forces $e = 0$, hence $s = r$. On $L$ we have $a,c \in (-1,0)$, so $a = c$, $b = d$ and finally $p = q$, a contradiction.

In either case $p = q$, so no two distinct points of $\Omega_0$ share the same local Weyl counting function. By Lemma~\ref{lem: billiards}, one can always hear where a 45-45-90 triangle is struck, up to the symmetry $\phi$.
\end{proof}

\section{Proof of Theorem \ref{thm: main 2}} \label{sec: proof of surprising theorem}

\begin{proof}
Let $p = (\tfrac{7\sqrt3}{16}, \tfrac{3}{16})$ and $q = (\tfrac{5\sqrt3}{16}, \tfrac{3}{16})$, two distinct points in the interior of $\Omega$, not related by any isometry of $\Omega$. It suffices to show $|\varphi_\xi(p)| = |\varphi_\xi(q)|$ for every $\xi = A^{-t}n$, $n \in \Z^2$, where $\varphi_\xi$ is our calculated eigenfunction from Lemma~\ref{lem: 30-60-90 eigenfunction}. Fix such an $n$, set $u = n_1+n_2$ and $v = n_1-n_2$. Note $u \equiv v \pmod 2$ since $u-v=2n_2$.

Plugging in $p_1 = \tfrac{7\sqrt3}{16}$, $p_2 = q_2 = \tfrac{3}{16}$, and $q_1 = \tfrac{5\sqrt3}{16}$ into our eigenfunction yields
\begin{align*}
    \varphi_\xi(p) &= \sin\tfrac{5\pi u}{8}\sin\tfrac{3\pi v}{8} - \sin\tfrac{\pi u}{4}\sin\tfrac{\pi v}{2} - \sin\tfrac{7\pi u}{8}\sin\tfrac{\pi v}{8}, \\[4pt]
    \varphi_\xi(q) &= \sin\tfrac{\pi u}{2}\sin\tfrac{\pi v}{4} - \sin\tfrac{\pi u}{8}\sin\tfrac{3\pi v}{8} - \sin\tfrac{5\pi u}{8}\sin\tfrac{\pi v}{8}.
\end{align*}
By Lemma~\ref{lem: 30-60-90 Laurent verification}, writing $z = e^{i\pi u/8}$ and $w = e^{i\pi v/8}$, these satisfy the Laurent polynomial identities
\begin{equation}\label{eq: 30-60-90 Laurent}
    \varphi_\xi(p) - \varphi_\xi(q) = \frac{G_0 \cdot M}{4z^7w^4}, \qquad \varphi_\xi(p) + \varphi_\xi(q) = \frac{G_0 \cdot H}{4z^7w^4},
\end{equation}
where
\begingroup\small
\begin{align*}
    G_0 &= (z^2-1)(w^2-1)(z-w)(z^3-w)(zw-1)(z^3w-1), \\
    M   &= z + z^3 + w + z^4w + zw^2 + z^3w^2, \\
    H   &= (z^2+1)(z+w)(zw+1).
\end{align*}
\endgroup
Therefore
\[
    \varphi_\xi(p)^2 - \varphi_\xi(q)^2 = \bigl(\varphi_\xi(p) - \varphi_\xi(q)\bigr)\bigl(\varphi_\xi(p) + \varphi_\xi(q)\bigr) = \frac{G_0^2 \cdot M \cdot H}{16\,z^{14}w^8}.
\]
It remains to show that $G_0^2 \cdot M \cdot H = 0$ at $z = e^{i\pi u/8}$, $w = e^{i\pi v/8}$. By Lemma~\ref{lem: 30-60-90 polynomial vanishing},
\[
    G_0 \cdot M \cdot H = (z^8 - w^8)\cdot C + (z^{16}-1)\cdot A
\]
as polynomials, for explicit $C,A \in \Z[z,w]$. Since $z,w$ are $16$th roots of unity, $z^{16}=w^{16}=1$. Moreover $z^8/w^8 = e^{i\pi(u-v)} = 1$ since $u-v$ is even, so $z^8=w^8$. Hence both terms on the right vanish, giving $G_0 \cdot M \cdot H = 0$.

For every $\xi \in \widehat \Lambda = A^{-t} \Z^2$,
\[
    \varphi_\xi(p)^2 - \varphi_\xi(q)^2 = \frac{G_0^2 \cdot MH}{16z^{14}w^8} = 0,
\]
hence $|\varphi_\xi(p)| = |\varphi_\xi(q)|$.  Summing $|\varphi_\xi(p)|^2 = |\varphi_\xi(q)|^2$ over all $\xi$ with $|2\pi\xi| \leq \lambda$ gives $N(p,\lambda) = N(q,\lambda)$ for all $\lambda \geq 0$. Since $p$ and $q$ are not related by any isometry of $\Omega$, this exhibits a pair of points on the 30-60-90 triangle that are audibly indistinguishable, as claimed.
% \textcolor{red}{[To do: Argue that $p$ and $q$ are the only such audibly indistinguishable pair of points.]}
\end{proof}

\appendix

%%%%%%%%%%%%%%%%%%%%%%%%%%%%%%%%%%%%%%%%%
% APPENDIX: Lattices and Fourier series %
%%%%%%%%%%%%%%%%%%%%%%%%%%%%%%%%%%%%%%%%%

\section{Lattices and Fourier series}\label{app: lattices and fourier series}

This section reviews some of the machinery we use to compute the Laplace eigenfunctions and their eigenvalues for the three special triangles. We refer the reader to \cite{steinFourierAnalysisIntroduction2003, wolffLecturesHarmonicAnalysis2003} for background on Fourier series and \cite{gruberGeometryNumbers1987} for lattices.

\subsection{Lattices}

We start by reviewing some facts about lattices, found in \cite{gruberGeometryNumbers1987}.

\begin{definition}\label{def: lattice}
    A \emph{lattice} in $\R^d$ is a subset of the form
    \[
        A\Z^d = \{An : n \in \Z^d \}
    \]
    where $A$ is a nonsingular $d \times d$ matrix with real entries.
\end{definition}

We have a helpful equivalent characterization of lattices below, which follows readily from \cite[\S1.3 Theorem 2]{gruberGeometryNumbers1987}. We let
\[
    B_r = \{x \in \R^d : |x| < r\}
\]
denote the open ball of radius $r$ centered at the origin.

\begin{proposition}\label{prop: lattice characterization}
    A subgroup $\Lambda$ of $\R^d$ is a lattice if and only if both of the following hold:
    \begin{enumerate}
        \item There exists $\epsilon > 0$ such that the balls $B_\epsilon + x$ for $x \in \Lambda$ are disjoint.
        \item There exists $r > 0$ such that the balls $B_r + x$ for $x \in \Lambda$ cover $\R^d$.
    \end{enumerate}
\end{proposition}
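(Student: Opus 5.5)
The statement to prove is Proposition \ref{prop: lattice characterization}. I would prove both directions directly from Definition \ref{def: lattice}, using elementary geometry of numbers and compactness. The cited result \cite[\S1.3 Theorem 2]{gruberGeometryNumbers1987} can serve as a shortcut: a discrete subgroup of $\R^d$ not contained in a proper subspace is a lattice.

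\emph{Forward direction.} Suppose $\Lambda = A\Z^d$ with $A$ nonsingular.

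For (1), I use $|An| \ge \|A^{-1}\|^{-1}|n| \ge \|A^{-1}\|^{-1}$ for every $n \neq 0$. Hence distinct lattice points are at distance at least $\delta := \|A^{-1}\|^{-1}$. Taking $\epsilon = \delta/2$, the balls $B_\epsilon + x$, $x\in\Lambda$, are pairwise disjoint.

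For (2), every $y \in \R^d$ can be written as $y = A(n + \theta)$ with $n \in \Z^d$ and $\theta \in [0,1)^d$, by splitting $A^{-1}y$ into its integer part $n$ and fractional part $\theta$. Then $|y - An| = |A\theta| \le \|A\|\sqrt d$. So any $r > \|A\|\sqrt d$ makes the balls $B_r + x$ cover $\R^d$.

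\emph{Reverse direction.} Suppose $\Lambda$ is a subgroup satisfying (1) and (2).

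First, $\Lambda$ is discrete. If $x \neq y$ are in $\Lambda$, then the balls $B_\epsilon + x$ and $B_\epsilon + y$ are disjoint, so $|x-y| \ge 2\epsilon$. In particular, every bounded set contains only finitely many points of $\Lambda$.

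Second, $\Lambda$ spans $\R^d$. If it lay in a proper subspace $V$, choose a unit vector $v \perp V$. The point $2r v$ has distance at least $2r$ from $V$, so it lies in no ball $B_r + x$, contradicting (2).

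Now pick linearly independent $b_1,\dots,b_d \in \Lambda$. At this point one could cite Gruber's theorem and stop. To stay self-contained, I would instead construct a basis inductively, which is the main step.

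Set $V_k = \operatorname{span}(b_1,\dots,b_k)$. Among the points of $\Lambda \cap V_k$ outside $V_{k-1}$, choose $a_k$ whose component orthogonal to $V_{k-1}$ has minimal positive length. Such a minimum exists for the following reason. Any candidate can be translated by an element of $\Z a_1+\dots+\Z a_{k-1}$ (which lies in $\Lambda$ by induction) so that its $V_{k-1}$-component lies in a bounded fundamental parallelepiped. Restricting to candidates with orthogonal height at most that of $b_k$ then leaves a bounded region, which by discreteness contains only finitely many lattice points.

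Next, the standard argument shows $\Lambda \cap V_k = \Z a_1 + \dots + \Z a_k$. Given $x \in \Lambda\cap V_k$, subtract the integer multiple of $a_k$ matching $x$'s height to within less than one height unit of $a_k$. The remainder then has orthogonal height strictly smaller than that of $a_k$. By minimality this height must be zero, so the remainder lies in $V_{k-1}$, and induction finishes.

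At $k=d$ this gives $\Lambda = A\Z^d$ with $A = [a_1 \cdots a_d]$, which is nonsingular because the $a_k$ are linearly independent.

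I expect the obstacle to be the care needed in this inductive basis step, specifically justifying that the minimum is attained via the finiteness argument. Everything else is routine norm estimates.
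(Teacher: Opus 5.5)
Your proof is correct and takes the same route as the paper. The paper gives no argument beyond saying the proposition follows readily from Gruber's theorem (a discrete subgroup not contained in a proper subspace is a lattice), which is exactly your reduction via (1) $\Rightarrow$ uniform $2\epsilon$-separation and (2) $\Rightarrow$ spanning. Your norm estimates for the forward direction and your inductive minimal-height basis construction just supply, soundly, the details that the paper leaves to that citation.
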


We will generally consider a lattice $\Lambda$ both as a set of points in $\R^d$, and also as a group acting on $\R^d$ by translation, i.e.
\[
    \begin{split}
        \Lambda \times \R^d &\to \R^d \\
        (x,y) &\mapsto x + y.
    \end{split}
\]
Considering $\Lambda$ as a group acting on $\R^d$, we describe two natural fundamental domains.

\begin{example}[Parallelepiped]
    Given a lattice $\Lambda = A \Z^d$, the parallelepiped $A([0,1]^d)$ is a fundamental domain for $\Lambda$.
\end{example}

\begin{example}[Voronoi cell]
    Given a lattice $\Lambda$ in $\R^d$, the Voronoi cell
    \[
        \{y \in \R^d : |y| \leq |x - y| \text{ for each $x \in \Lambda$}\}
    \]
    is a fundamental domain for $\Lambda$.
\end{example}

Note that both the parallelepiped and the Voronoi cell are compact, the latter from part (2) of Proposition \ref{prop: lattice characterization}.

Given a lattice $\Lambda$ in $\R^d$, we say a function $f$ on $\R^d$ is \emph{$\Lambda$-periodic} if
\[
    f(x + y) = f(x) \qquad \text{ for each } x \in \Lambda, \ y \in \R^d.
\]
For $1 \leq p \leq \infty$, by $L^p(\R^d/\Lambda)$, we mean the set of complex-valued $\Lambda$-periodic functions on $\R^d$ whose restriction to any fundamental domain $D$ of $\Lambda$ is in $L^p(D)$.

\begin{proposition}
    Suppose $f \in L^1(\R^d/\Lambda)$. Then,
    \[
        \int_{D} f(x) \, dx = \int_{D'} f(x) \, dx
    \]
    for any two fundamental domains $D$ and $D'$ of $\Lambda$.
\end{proposition}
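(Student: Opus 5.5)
The statement to prove is: for $f \in L^1(\R^d/\Lambda)$ and any two fundamental domains $D, D'$ of $\Lambda$, the integrals of $f$ over $D$ and over $D'$ agree. The plan is the standard cut-and-translate argument. It uses $\Lambda$-periodicity of $f$ and translation invariance of Lebesgue measure. Throughout, a fundamental domain is understood as a measurable set whose $\Lambda$-translates cover $\R^d$ and overlap only in measure zero. For closed sets like the parallelepiped or Voronoi cell, this is exactly the sense in which they are fundamental domains.

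First I decompose $D$ along the translates of $D'$. Since $\R^d = \bigcup_{x \in \Lambda} (D' + x)$ with overlaps of measure zero, the sets $D \cap (D' + x)$, $x \in \Lambda$, partition $D$ up to a null set. Hence
\[
    \int_D f = \sum_{x \in \Lambda} \int_{D \cap (D'+x)} f .
\]
Because $f \in L^1(D)$, the sum converges absolutely and splitting the integral is justified by countable additivity (dominated convergence).

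Next, in each piece I substitute $y \mapsto y + x$, that is, translate by $-x$. The set $D \cap (D'+x)$ maps onto $(D - x) \cap D'$, and $f(y+x) = f(y)$ by periodicity. Since $-x$ also ranges over $\Lambda$, this gives
\[
    \int_D f = \sum_{x \in \Lambda} \int_{(D + x) \cap D'} f .
\]
Now the sets $(D+x) \cap D'$ partition $D'$ up to measure zero, because the translates of $D$ tile $\R^d$. The right-hand side therefore equals $\int_{D'} f$. The interchange is again valid because $|f|$ satisfies the same identities, giving $\int_D |f| = \int_{D'} |f| < \infty$.

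The only delicate point is measure-theoretic bookkeeping: the overlaps of translates (boundaries) must have measure zero, and the countable sums must be absolutely convergent. I would handle both by first running the argument with $|f|$ in place of $f$, which is a nonnegative computation where Tonelli applies freely. This establishes integrability on every fundamental domain, and then the signed version follows.
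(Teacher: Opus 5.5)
Your proposal is correct and is essentially the paper's argument. The paper inserts the a.e.\ partition of unity $\sum_{x\in\Lambda}\1_{D+x}=1$ into $\int_{D'} f$, translates each piece using periodicity, and resums with $\sum_{x\in\Lambda}\1_{D'-x}=1$; this is your cut-and-translate with the roles of $D$ and $D'$ swapped, and your preliminary Tonelli computation with $|f|$ supplies the justification for interchanging sum and integral that the paper leaves implicit.
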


\begin{proof}
    Since $D$ and $D'$ are fundamental domains,
    \[
        \sum_{x \in \Lambda} \1_{D + x} = 1 \qquad \text{ and } \qquad \sum_{x \in \Lambda} \1_{D' - x} = 1 \qquad \text{a.e. in $\R^d$}.
    \]
    It follows
    \begin{align*}
        \int_{D'} f(y) \, dy
        &= \int_{\R^d} f(y) \1_{D'}(y) \, dy \\
        &= \sum_{x \in \Lambda} \int_{\R^d} f(y) \1_{D'}(y) \1_{D + x}(y) \, dy \\
        &= \sum_{x \in \Lambda} \int_{\R^d} f(y) \1_{D'}(y) \1_{D}(y - x) \, dy \\
        &= \sum_{x \in \Lambda} \int_{\R^d} f(y + x) \1_{D'}(y + x) \1_{D}(y) \, dy && \text{(change of variables)} \\
        &= \sum_{x \in \Lambda} \int_{\R^d} f(y) \1_{D'}(y + x) \1_{D}(y) \, dy && \text{($f$ is $\Lambda$-periodic)} \\
        &= \sum_{x \in \Lambda} \int_{\R^d} f(y) \1_{D' - x}(y) \1_{D}(y) \, dy \\
        &= \int_{\R^d} f(y) \1_{D}(y) \, dy \\
        &= \int_{D} f(y) \, dy.
    \end{align*}
\end{proof}

For general $1 \leq p < \infty$ and for a lattice $\Lambda$ with fundamental domain $D$, we define
\[
    \|f\|_{L^p(\R^d/\Lambda)} := \|f\|_{L^p(D)} = \left( \int_D |f(x)|^p \, dx \right)^\frac 1 p,
\]
and note by the proposition above that this norm is indifferent to our choice of fundamental domain. In the case $p = 2$, $L^2(\R^d/\Lambda)$ is a Hilbert space with Hermitian inner product
\[
    \langle f,g \rangle_{L^2(\R^d/\Lambda)} = \int_D f(x) \overline{g(x)} \, dx.
\]

Key to Fourier series in higher dimensions is the notion of the dual lattice:

\begin{definition}[Dual lattice]
    Given a lattice $\Lambda = A \Z^d$ in $\R^d$, the \emph{dual lattice} is
    \[
        \widehat \Lambda = A^{-t} \Z^d,
    \]
    where $A^{-t}$ denotes the inverse transpose of $A$.
\end{definition}

One should think of the dual lattice $\widehat \Lambda$ to $\Lambda$ as the largest lattice for which $x \cdot \xi \in \Z$ for each $x \in \Lambda$ and $\xi \in \widehat \Lambda$.

\subsection{Fourier series with respect to a lattice}

Now we recall some facts on Fourier series in $\R^d$.

\begin{theorem}[Fourier series] \label{thm: 2d fourier series}
    For each $f \in L^2(\R^d/\Z^d)$,
    \[
        f(x) = \sum_{n \in \Z^d} e^{2\pi i x \cdot n} \widehat f(n),
    \]
    where equality and convergence are in the sense of $L^2(\R^d/\Z^d)$.
\end{theorem}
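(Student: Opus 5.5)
The statement at the end is Theorem \ref{thm: 2d fourier series}, the standard $L^2$ Fourier series expansion on the torus $\R^d/\Z^d$. I would prove it in the classical way: first show the exponentials form an orthonormal system, then show they are complete. Completeness is enough, because an orthonormal basis of a Hilbert space gives norm convergence of the expansion.

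For orthonormality, set $e_n(x) = e^{2\pi i x\cdot n}$ and use $D=[0,1]^d$ as the fundamental domain. The integral factors over coordinates, giving
\[
\langle e_n, e_m\rangle = \prod_{k=1}^d \int_0^1 e^{2\pi i x_k(n_k-m_k)}\,dx_k = \1_{n=m}.
\]
Since $\widehat f(n) = \langle f, e_n\rangle$, Bessel's inequality gives $\sum_n |\widehat f(n)|^2 \le \|f\|^2$. So the partial sums over any exhausting family of finite sets converge in $L^2$ to the orthogonal projection of $f$ onto the closed span of $\{e_n\}$. What remains is to show this span is all of $L^2(\R^d/\Z^d)$.

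Completeness is the main step. I would show that trigonometric polynomials are dense in $L^2(\R^d/\Z^d)$, in two stages.

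\emph{Stage 1.} Continuous $\Z^d$-periodic functions are dense in $L^2(\R^d/\Z^d)$. Continuous functions with compact support in the interior of $[0,1]^d$ are dense in $L^2([0,1]^d)$, and each extends periodically to a continuous function on the torus.

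\emph{Stage 2.} Each continuous periodic $g$ is a uniform limit of trigonometric polynomials. I would use the Fej\'er kernel in $d$ variables,
\[
F_N(x) = \prod_{k=1}^d \frac{1}{N}\left(\frac{\sin(\pi N x_k)}{\sin(\pi x_k)}\right)^2 .
\]
It is a nonnegative trigonometric polynomial with integral $1$ over $D$, and its mass concentrates at the lattice points as $N\to\infty$. The convolution $F_N * g$ is a trigonometric polynomial, and uniform continuity of $g$ gives $F_N*g \to g$ uniformly. Uniform convergence on $D$ implies $L^2(D)$ convergence.

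Alternatively, Stage 2 follows directly from Stone--Weierstrass on the compact torus. The exponentials form a self-adjoint algebra containing the constants, and they separate points of $\R^d/\Z^d$.

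To finish, suppose $f \perp e_n$ for every $n$. Then $f$ is orthogonal to the dense subspace of trigonometric polynomials, so $f=0$. Hence $\{e_n\}$ is an orthonormal basis, and the expansion holds in $L^2$. The only real obstacle is the density argument in Stage 2. Everything else is routine Hilbert space theory, and for the torus it reduces to citing Fej\'er or Stone--Weierstrass, as in \cite{steinFourierAnalysisIntroduction2003}.
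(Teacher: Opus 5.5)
Your proof is correct and is the standard one: orthonormality of the exponentials on $[0,1]^d$, Bessel's inequality, then completeness from the density of trigonometric polynomials via the Fej\'er kernel or Stone--Weierstrass. The paper gives no proof of this theorem; it quotes it as background from \cite{steinFourierAnalysisIntroduction2003, wolffLecturesHarmonicAnalysis2003}, and your argument is essentially the one found there.
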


We require a mild generalization of the $d$-dimensional Fourier series formula to $\Lambda$-periodic functions for general lattices $\Lambda$ in $\R^d$.

\begin{corollary}\label{cor: lattice fourier series}
    Let $\Lambda$ be a lattice in $\R^d$. For each $f \in L^2(\R^d/\Lambda)$,
    \[
        f(x) = \sum_{\xi \in \widehat \Lambda} e^{2\pi i x \cdot \xi} \widehat f(\xi),
    \]
    where equality and convergence are in the sense of $L^2(\R^d/\Lambda)$.
\end{corollary}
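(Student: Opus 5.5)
This is a standard fact, and I would prove it by reducing to Theorem~\ref{thm: 2d fourier series} through a linear change of variables. Write $\Lambda = A\Z^d$ with $A$ nonsingular. Given $f \in L^2(\R^d/\Lambda)$, define $g(y) = f(Ay)$ for $y \in \R^d$. Then $g$ is $\Z^d$-periodic, since $g(y+n) = f(Ay + An) = f(Ay)$ for $n \in \Z^d$. Also $g \in L^2(\R^d/\Z^d)$: the map $x = Ay$ sends $[0,1]^d$ onto the parallelepiped fundamental domain $A([0,1]^d)$, so $\|g\|_{L^2([0,1]^d)}^2 = |\det A|^{-1}\|f\|^2_{L^2(A[0,1]^d)}$.

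Next I would apply Theorem~\ref{thm: 2d fourier series} to get $g(y) = \sum_{n} e^{2\pi i y\cdot n}\widehat g(n)$ in $L^2(\R^d/\Z^d)$. Substituting $y = A^{-1}x$ and using $A^{-1}x \cdot n = x \cdot A^{-t}n$, the exponentials become $e^{2\pi i x\cdot \xi}$ with $\xi = A^{-t}n$ ranging over $\widehat\Lambda$. For the coefficients, change variables in $\widehat g(n) = \int_{[0,1]^d} e^{-2\pi i y\cdot n} g(y)\,dy$ to obtain
\[
\frac{1}{|\det A|}\int_{A[0,1]^d} e^{-2\pi i x\cdot\xi} f(x)\,dx .
\]
This is exactly $\widehat f(\xi)$, interpreted as the normalized integral over a fundamental domain, which is the normalization the paper uses with $|\tilde\Omega|$, since $|A[0,1]^d| = |\det A|$. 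By the proposition on independence of the fundamental domain, applied to the $\Lambda$-periodic integrand $e^{-2\pi i x\cdot \xi}f(x)$ (which is periodic because $x\cdot\xi\in\Z$ for $x\in\Lambda$), the integral may be taken over any fundamental domain, for instance the Voronoi cell $\tilde\Omega$.

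Finally, convergence transfers: the partial sums converge to $g$ in $L^2([0,1]^d)$, and composing with $A^{-1}$ changes the norm only by the constant factor $|\det A|^{1/2}$. So the transformed partial sums converge to $f$ in $L^2(A[0,1]^d)$, which is the norm of $L^2(\R^d/\Lambda)$ by the same independence proposition. I see no real obstacle. The only points needing care are the normalization of $\widehat f(\xi)$ and checking that the order of summation (any exhaustion of $\Z^d$ by finite sets) is preserved under the bijection $n \mapsto A^{-t}n$.
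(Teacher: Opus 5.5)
Your proposal is correct and follows the same route as the paper: set $g = f\circ A$, apply the $\Z^d$ Fourier series, and substitute $y = A^{-1}x$, so that $A^{-1}x\cdot n = x\cdot A^{-t}n$ and the coefficients become the normalized integrals over $A([0,1]^d)$. You also check that the $L^2$ norms scale by the factor $|\det A|^{1/2}$ and that the result does not depend on the choice of fundamental domain, which fills in details the paper leaves implicit.
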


\begin{proof}
    Write $\Lambda = A \Z^d$ for an invertible linear operator $A$ on $\R^d$. For $f \in L^2(\R^d/\Lambda)$, we note $g := f \circ A \in L^2(\R^d/\Z^d)$ and so we may apply the two-dimensional Fourier series formula. In the following calculation, $Q$ denotes the unit cube $[0,1]^d$ and note that $Q$ is a fundamental domain for $\Z^d$ and $AQ$ is a fundamental domain for $\Lambda$. We have
    \begin{align*}
        f(x) &= g(A^{-1}x) \\
        &= \sum_{n \in \Z^d} e^{2\pi i A^{-1}x \cdot n} \widehat g(n) \\
        &= \sum_{n \in \Z^d} e^{2\pi i A^{-1}x \cdot n} \int_Q e^{-2\pi i y \cdot n} g(y) \, dy \\
        &= \sum_{n \in \Z^d} e^{2\pi i A^{-1}x \cdot n} \int_Q e^{-2\pi i y \cdot n} f(Ay) \, dy \\
        &= \sum_{n \in \Z^d} e^{2\pi i A^{-1}x \cdot n} |\det A|^{-1} \int_{AQ} e^{-2\pi i A^{-1}y \cdot n} f(y) \, dy \\
        &= \sum_{n \in \Z^d} e^{2\pi i x \cdot A^{-t}n } \frac{1}{|AQ|}  \int_{AQ} e^{-2\pi i y \cdot A^{-t} n} f(y) \, dy \\
        &= \sum_{\xi \in \widehat \Lambda} e^{2\pi i x \cdot \xi} \frac{1}{|AQ|} \int_{AQ} e^{-2\pi i y \cdot \xi} f(y) \, dy \\
        &= \sum_{\xi \in \widehat \Lambda} e^{2\pi i x \cdot \xi} \widehat f(\xi)
    \end{align*}
    as desired.
\end{proof}

%%%%%%%%%%%%%%%%%%%%%%%%%%%%%%%%%%%%
% APPENDIX: Auxiliary computations %
%%%%%%%%%%%%%%%%%%%%%%%%%%%%%%%%%%%%

\section{Auxiliary computations}\label{sec: trig computations}

This section collects longer, routine computations that are used to simplify or verify expressions appearing in the proofs of Section \ref{sec: proof of main theorem}, so that those proofs can focus on the underlying argument rather than on algebraic manipulation.

\begin{lemma}\label{lem: 45-45-90 N* computations}
    Let $\Omega$ be the 45-45-90 triangle in standard position, let $p = (p_1,p_2)$ be a point in the interior of $\Omega$, and set
    \[
        a = \cos(2\pi p_1), \qquad b = \cos(2\pi p_2).
    \]
    Using the eigenfunctions $e_\xi$ from Lemma \ref{lem: 45-45-90 eigenfunction} and writing $N^*(p,\lambda) = \sum_{\lambda_\xi = \lambda} |e_\xi(p)|^2$, we have:
    \begin{enumerate}
        \item For $\xi = (2,1)$,
        \[
            N^*(p,\lambda_1) = 32(1-a^2)(1-b^2)(b-a)^2.
        \]
        \item For $\xi = (3,1)$,
        \[
            N^*(p,\lambda_2) = 128(1-a^2)(1-b^2)(b-a)^2(a+b)^2.
        \]
        \item For $\xi = (3,2)$,
        \[
            N^*(p,\lambda_3) = 32(1-a^2)(1-b^2)(b-a)^2(4ab+1)^2.
        \]
        \item For $\xi = (4,2)$,
        \[
            N^*(p,\lambda_4) = 2048(1-a^2)a^2(1-b^2)b^2(b-a)^2(a+b)^2.
        \]
    \end{enumerate}
\end{lemma}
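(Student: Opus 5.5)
The plan is to compute each $N^*(p,\lambda_k)$ directly from the closed form in Lemma \ref{lem: 45-45-90 eigenfunction}, rewriting every sine of a multiple angle as $\sin(2\pi p_j)$ times a Chebyshev polynomial of the second kind in $a$ or $b$.

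First I identify which lattice points contribute to each eigenspace. The relevant values of $n_1^2+n_2^2$ are $5, 10, 13, 20$, and each has exactly one representation up to signs and order: $5=2^2+1^2$, $10=3^2+1^2$, $13=3^2+2^2$, $20=4^2+2^2$. Since in each case $n_1\neq n_2$ and both are nonzero, $G$ acts freely on the orbit $G\xi$.

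So by Lemma \ref{lem: orthonormal automorphic basis}, each $E_{\lambda_k}$ is one-dimensional. It is spanned by $e_\xi=\sqrt{\#G/|\Omega|}\,\varphi_\xi$ with $\#G=8$, so
\[
    N^*(p,\lambda_k)=\frac{8}{|\Omega|}\,|\varphi_\xi(p)|^2 .
\]
Equivalently, this is the sum over the eight $\xi$ in the orbit of the suitably normalized $|\varphi_\xi(p)|^2$, which is the "multiplicity $8$" bookkeeping mentioned in the proof of Proposition \ref{prop: 45-45-90 triangle}.

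Next I set $s_1=\sin(2\pi p_1)$ and $s_2=\sin(2\pi p_2)$, and use
\[
    \sin(2\pi k x)=\sin(2\pi x)\,U_{k-1}(\cos 2\pi x),
\]
with $U_0=1$, $U_1=2c$, $U_2=4c^2-1$, $U_3=8c^3-4c$. Then for every $\xi$ in question,
\[
    2\varphi_\xi(p)=s_1s_2\bigl[U_{n_2-1}(a)U_{n_1-1}(b)-U_{n_1-1}(a)U_{n_2-1}(b)\bigr],
\]
and $s_1^2s_2^2=(1-a^2)(1-b^2)$. This produces the common factor $(1-a^2)(1-b^2)$ in all four formulas.

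It then remains to factor the antisymmetric bracket in each case:
\begin{itemize}
    \item $(2,1)$: the bracket is $2b-2a$.
    \item $(3,1)$: it is $(4b^2-1)-(4a^2-1)=4(b-a)(b+a)$.
    \item $(3,2)$: it is $2a(4b^2-1)-2b(4a^2-1)=2(b-a)(4ab+1)$.
    \item $(4,2)$: it is $2a(8b^3-4b)-2b(8a^3-4a)=16ab(b-a)(b+a)$.
\end{itemize}
Since each bracket is antisymmetric in $a,b$, the factor $(b-a)$ is guaranteed. Squaring and multiplying by the normalization constant gives the stated polynomials, up to the overall numerical constants.

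The only step I expect to need care is fixing those constants ($32,128,32,2048$). This means being consistent about $|\Omega|$ for the triangle in standard position, about whether $\varphi_\xi$ carries the $\tfrac12$ or the $\tfrac18$ prefactor, and about whether $N^*$ counts one orbit representative or all eight $\xi$. I would settle the convention once, using the $(2,1)$ case against the claimed $32$, and then check that the ratios $128/32=4\cdot 4^2/16$, and so on, are consistent with the brackets above. The constants play no role in the proof of Proposition \ref{prop: 45-45-90 triangle}, since they cancel when equating $N^*(p,\cdot)$ with $N^*(q,\cdot)$.
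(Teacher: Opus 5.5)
Your route is the paper's. The identity $\sin(2\pi k x)=\sin(2\pi x)\,U_{k-1}(\cos 2\pi x)$ is exactly the double- and triple-angle expansions used there, and your four factored brackets $B$ are correct: $2(b-a)$, $4(b-a)(b+a)$, $2(b-a)(4ab+1)$, and $16ab(b-a)(b+a)$. Your observation that each eigenspace is a single free $G$-orbit, so that $N^*(p,\lambda_k)=\frac{8}{|\Omega|}|\varphi_\xi(p)|^2$ by Lemma \ref{lem: orthonormal automorphic basis}, is also cleaner than the paper's ``multiplicity $8$'' bookkeeping.

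The step that does not hold up is fixing the constants by calibrating against the claimed $32$. That is not a derivation, and if you evaluate your own formula it will not return $32$.

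\begin{itemize}
\item For Lemma \ref{lem: 45-45-90 eigenfunction} to apply with $a=\cos(2\pi p_1)$, the lattice must be $\Z^2$. So $\Omega$ is the triangle with legs $\tfrac12$ used in Proposition \ref{prop: 45-45-90 triangle}, and $|\Omega|=\tfrac18$.
\item Your formula then gives $N^*=64|\varphi_\xi(p)|^2=16(1-a^2)(1-b^2)B^2$. The four constants come out as $64$, $256$, $64$, $4096$, exactly twice the stated ones.
\item A quick check confirms this. $\int_\Omega N^*(x,\lambda_k)\,dx$ must equal $\dim E_{\lambda_k}=1$, and $\int_\Omega|\varphi_\xi|^2=|\Omega|/\#G=\tfrac1{64}$. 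So the stated $32|\varphi_\xi|^2$ would integrate to only $\tfrac12$.
\end{itemize}

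The paper's numbers come from its line $N^*=8\,[\sin(4\pi p_2)\sin(2\pi p_1)-\sin(4\pi p_1)\sin(2\pi p_2)]^2$, which is $8|2\varphi_\xi(p)|^2=32|\varphi_\xi(p)|^2$. That sums an unnormalized function over eight orbit points, all of which give the same eigenfunction up to sign. Your formula reproduces $32$ only if you insert $|\Omega|=\tfrac14$, the hypotenuse-$1$ triangle of Figure \ref{fig: triangles in standard position}, and that is incompatible with $\Lambda=\Z^2$.

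So you should state the constants you actually derive, or state the four identities up to one common positive factor. Nothing downstream changes, because Proposition \ref{prop: 45-45-90 triangle} uses these expressions only after dividing by \eqref{eq:E1}.
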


\begin{proof}
    Throughout, recall $N^*(p,\lambda) = 8\,|e_\xi(p)|^2$ for the relevant $\xi$, since the multiplicity is $8$ as noted above.

    \medskip
    \noindent\textbf{(1) $\xi = (2,1)$.} Using $\sin(4\pi t) = 2\sin(2\pi t)\cos(2\pi t)$,
    \begin{align*}
        N^*(p,\lambda_1) &= 8\left[\sin(4\pi p_2)\sin(2\pi p_1) - \sin(4\pi p_1)\sin(2\pi p_2)\right]^2 \\
        &= 8\left[2\sin(2\pi p_2)\cos(2\pi p_2)\sin(2\pi p_1) - 2\sin(2\pi p_1)\cos(2\pi p_1)\sin(2\pi p_2)\right]^2 \\
        &= 32\left[\sin(2\pi p_1)\sin(2\pi p_2)(\cos(2\pi p_2) - \cos(2\pi p_1))\right]^2 \\
        &= 32(1-a^2)(1-b^2)(b-a)^2.
    \end{align*}

    \medskip
    \noindent\textbf{(2) $\xi = (3,1)$.} Using $\sin(6\pi t) = 3\sin(2\pi t) - 4\sin^3(2\pi t)$,
    \begin{align*}
        N^*(p,\lambda_2) &= 8\left[(3\sin(2\pi p_2)-4\sin^3(2\pi p_2))\sin(2\pi p_1) - (3\sin(2\pi p_1)-4\sin^3(2\pi p_1))\sin(2\pi p_2)\right]^2 \\
        &= 8\left[\sin(2\pi p_1)\sin(2\pi p_2)\left(\sin^2(2\pi p_1) - \sin^2(2\pi p_2)\right)\cdot 4\right]^2 \\
        &= 128(1-a^2)(1-b^2)(b^2-a^2)^2 \\
        &= 128(1-a^2)(1-b^2)(b-a)^2(a+b)^2.
    \end{align*}

    \medskip
    \noindent\textbf{(3) $\xi = (3,2)$.} Using $\sin(4\pi t) = 2\sin(2\pi t)\cos(2\pi t)$ and $\sin(6\pi t) = 3\sin(2\pi t) - 4\sin^3(2\pi t)$,
    \begin{align*}
        N^*(p,\lambda_3) &= 8\Big[2(3\sin(2\pi p_2)-4\sin^3(2\pi p_2))\sin(2\pi p_1)\cos(2\pi p_1) \\
        &\qquad - 2(3\sin(2\pi p_1)-4\sin^3(2\pi p_1))\sin(2\pi p_2)\cos(2\pi p_2)\Big]^2 \\
        &= 32\left[\sin(2\pi p_1)\sin(2\pi p_2)\left((3-4\sin^2(2\pi p_2))\cos(2\pi p_1) - (3-4\sin^2(2\pi p_1))\cos(2\pi p_2)\right)\right]^2 \\
        &= 32(1-a^2)(1-b^2)(b-a)^2(4ab+1)^2.
    \end{align*}

    \medskip
    \noindent\textbf{(4) $\xi = (4,2)$.} Using $\sin(8\pi t) = 2\sin(4\pi t)\cos(4\pi t)$ and $\sin(4\pi t) = 2\sin(2\pi t)\cos(2\pi t)$,
    \begin{align*}
        N^*(p,\lambda_4) &= 8\left[\sin(4\pi p_1)\sin(8\pi p_2) - \sin(8\pi p_1)\sin(4\pi p_2)\right]^2 \\
        &= 8\left[16\,\sin(2\pi p_1)\cos(2\pi p_1)\,\sin(2\pi p_2)\cos(2\pi p_2)\left(\cos^2(2\pi p_2)-\cos^2(2\pi p_1)\right)\right]^2 \\
        &= 2048(1-a^2)a^2(1-b^2)b^2(b-a)^2(a+b)^2. \qedhere
    \end{align*}
\end{proof}

\begin{lemma}\label{lem: 30-60-90 Laurent verification}
    With $\varphi_\xi(p)$, $\varphi_\xi(q)$, $u$, $v$, $z$, $w$, $G_0$, $M$, $H$ as in the proof of Theorem \ref{thm: main 2},
    \[
        \varphi_\xi(p) - \varphi_\xi(q) = \frac{G_0 \cdot M}{4z^7w^4}, \qquad \varphi_\xi(p) + \varphi_\xi(q) = \frac{G_0 \cdot H}{4z^7w^4}.
    \]
\end{lemma}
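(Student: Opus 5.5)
The plan is to turn both trigonometric expressions into Laurent polynomials in $z = e^{i\pi u/8}$ and $w = e^{i\pi v/8}$. After clearing denominators, the two claimed identities become identities in $\Z[z,w]$, which can be checked by finite expansion.

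\emph{Rewriting the sines.} Every sine in the formulas for $\varphi_\xi(p)$ and $\varphi_\xi(q)$ from the proof of Theorem \ref{thm: main 2} has the form $\sin\tfrac{\alpha\pi u}{8}$ with $\alpha \in \{1,2,4,5,7\}$ or $\sin\tfrac{\beta \pi v}{8}$ with $\beta \in \{1,2,3,4\}$. Using
\[
    \sin\tfrac{\alpha\pi u}{8} = \frac{z^\alpha - z^{-\alpha}}{2i},
\]
and the analogous formula in $w$, each product becomes
\[
    4z^7w^4 \sin\tfrac{\alpha\pi u}{8}\sin\tfrac{\beta\pi v}{8} = -(z^{7+\alpha} - z^{7-\alpha})(w^{4+\beta} - w^{4-\beta}).
\]
Since $\alpha \le 7$ and $\beta \le 4$, this is an honest polynomial. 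So $P_\mp := 4z^7w^4(\varphi_\xi(p) \mp \varphi_\xi(q))$ are explicit signed sums of at most $24$ monomials each, with $z$-degree at most $14$ and $w$-degree at most $8$.

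\emph{Reduction to polynomial identities.} Because the substitution is valid for all real $u,v$, it suffices to prove the formal identities $P_- = G_0 M$ and $P_+ = G_0 H$ in $\Z[z,w]$. These then specialize to every $(u,v)$, in particular to $u = n_1+n_2$ and $v = n_1-n_2$.

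\emph{Consistency checks before expanding.} First, $G_0$ has $z$-degree $2+1+3+1+3 = 10$ and $w$-degree $4$. Multiplying by $M$ (bidegree $(4,2)$) or by $H$ (bidegree $(4,2)$) gives bidegree $(14,8)$, which matches $P_\mp$. Second, each $P_\mp$ changes sign under $z \mapsto z^{-1}$ (after rescaling by $z^{14}$) and under $w \mapsto w^{-1}$ (after rescaling by $w^8$). The factors $(z^2-1)$ and $(w^2-1)$ of $G_0$ account for this antisymmetry, so $P_\mp$ is divisible by them. The remaining factors $(z-w)$, $(z^3-w)$, $(zw-1)$, $(z^3w-1)$ can be confirmed by substituting $w = z$, $w = z^3$, $w = z^{-1}$ and $w = z^{-3}$ into $P_\mp$ and checking that the result vanishes identically in $z$.

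\emph{Main obstacle and how to handle it.} The core of the proof is identifying the cofactors $M$ and $H$, which amounts to bookkeeping. There are two ways to do this:
\begin{itemize}
    \item expand $G_0 M$ and $G_0 H$ fully and compare them coefficient by coefficient with $P_\mp$ (this is where a computer-algebra check is most reassuring);
    \item divide $P_\mp$ by the verified factors of $G_0$ and read off the quotients, which should be $M$ and $H$.
\end{itemize}
I would do the division first and then confirm it by expansion. One sign convention must be fixed at the outset: the overall sign $-\tfrac13$ and the normalization of $\varphi_\xi$. The expressions displayed in the proof of Theorem \ref{thm: main 2} already absorb a constant factor, and the lemma should be read with that same normalization. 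Since only $|\varphi_\xi(p)| = |\varphi_\xi(q)|$ is used later, any uniform constant is harmless, but the identities must be checked with exactly the displayed expressions.
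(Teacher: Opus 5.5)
Your proposal is correct and is essentially the paper's proof: rewrite each sine in terms of $z,w$, clear the factor $4z^7w^4$ to get two signed sums of $24$ monomials, and match these coefficient by coefficient against the expansions of $G_0M$ and $G_0H$ (the paper simply tabulates the coefficients). You are also right that the identity is about the displayed expressions, which equal $3\varphi_\xi$ in the normalization of Lemma \ref{lem: 30-60-90 eigenfunction}, and there is one small slip in your optional degree check: $G_0$ has $w$-degree $6$, not $4$, which is what makes $G_0M$ and $G_0H$ come out with bidegree $(14,8)$.
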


\begin{proof}
    Writing $\sin(\tfrac{k\pi u}{8}) = (z^k-z^{-k})/(2i)$ and $\sin(\tfrac{k\pi v}{8}) = (w^k-w^{-k})/(2i)$ and expanding, $4z^7w^4\varphi_\xi(p)$ and $4z^7w^4\varphi_\xi(q)$ are each sums of $12$ monomials $z^aw^b$ with coefficients $\pm1$. Adding and subtracting gives $4z^7w^4(\varphi_\xi(p)\mp\varphi_\xi(q))$, each a sum of $24$ monomials with coefficients $\pm 1$. Expanding $G_0\cdot M$ and $G_0 \cdot H$ similarly gives sums of $24$ monomials with coefficients $\pm1$. The tables below, indexed by $(a,b) = (\deg_z,\deg_w)$, record every coefficient on both sides:

    \medskip
    \noindent\textbf{Difference, $4z^7w^4(\varphi_\xi(p)-\varphi_\xi(q))$ vs.\ $G_0\cdot M$:}
    \begin{center}
    \begin{tabular}{c|c||c|c||c|c}
        $(a,b)$ & coeff. & $(a,b)$ & coeff. & $(a,b)$ & coeff. \\\hline
        $(0,3)$ & $+1$ & $(5,0)$ & $+1$ & $(11,2)$ & $-1$ \\
        $(0,5)$ & $-1$ & $(5,8)$ & $-1$ & $(11,6)$ & $+1$ \\
        $(2,1)$ & $-1$ & $(6,1)$ & $-1$ & $(12,1)$ & $+1$ \\
        $(2,3)$ & $-1$ & $(6,7)$ & $+1$ & $(12,3)$ & $+1$ \\
        $(2,5)$ & $+1$ & $(8,1)$ & $+1$ & $(12,5)$ & $-1$ \\
        $(2,7)$ & $+1$ & $(8,7)$ & $-1$ & $(12,7)$ & $-1$ \\
        $(3,2)$ & $+1$ & $(9,0)$ & $-1$ & $(14,3)$ & $-1$ \\
        $(3,6)$ & $-1$ & $(9,8)$ & $+1$ & $(14,5)$ & $+1$
    \end{tabular}
    \end{center}

    \medskip
    \noindent\textbf{Sum, $4z^7w^4(\varphi_\xi(p)+\varphi_\xi(q))$ vs.\ $G_0\cdot H$:}
    \begin{center}
    \begin{tabular}{c|c||c|c||c|c}
        $(a,b)$ & coeff. & $(a,b)$ & coeff. & $(a,b)$ & coeff. \\\hline
        $(0,3)$ & $+1$ & $(5,0)$ & $+1$ & $(11,2)$ & $+1$ \\
        $(0,5)$ & $-1$ & $(5,8)$ & $-1$ & $(11,6)$ & $-1$ \\
        $(2,1)$ & $-1$ & $(6,1)$ & $+1$ & $(12,1)$ & $+1$ \\
        $(2,3)$ & $+1$ & $(6,7)$ & $-1$ & $(12,3)$ & $-1$ \\
        $(2,5)$ & $-1$ & $(8,1)$ & $-1$ & $(12,5)$ & $+1$ \\
        $(2,7)$ & $+1$ & $(8,7)$ & $+1$ & $(12,7)$ & $-1$ \\
        $(3,2)$ & $-1$ & $(9,0)$ & $-1$ & $(14,3)$ & $-1$ \\
        $(3,6)$ & $+1$ & $(9,8)$ & $+1$ & $(14,5)$ & $+1$
    \end{tabular}
    \end{center}
    Since the tables match term by term, $4z^7w^4(\varphi_\xi(p)-\varphi_\xi(q)) = G_0 \cdot M$ and $4z^7w^4(\varphi_\xi(p)+\varphi_\xi(q)) = G_0 \cdot H$. Dividing by $4z^7w^4$ gives the result.
\end{proof}

\begin{lemma}\label{lem: 30-60-90 polynomial vanishing}
    With $G_0$, $M$, $H$ as in the proof of Theorem \ref{thm: main 2}, $G_0 \cdot M \cdot H = (z^8-w^8)\cdot C + (z^{16}-1)\cdot A$, where
    \begin{align*}
        C &= z^2(z^2-1)(z^2+1)\bigl(-w^2z^6 - w^2z^4 + wz^9 + wz + z^{10}+z^8+z^6+z^4+z^2+1\bigr), \\
        A &= -(z-w)(z+w)\bigl(w^5z + w^4z^2 + w^4 + w^3z^3 + w^2z^4 - w^2 + wz^5 - wz + z^6 + z^4\bigr).
    \end{align*}
\end{lemma}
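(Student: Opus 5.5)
This is a polynomial identity in $\Z[z,w]$, so I will verify it by direct expansion and comparison of coefficients, as in Lemma~\ref{lem: 30-60-90 Laurent verification}. Rather than multiplying out the triple product $G_0 \cdot M \cdot H$ all at once, I will first reduce the bookkeeping.

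\textbf{Step 1: reuse the earlier tables.} Lemma~\ref{lem: 30-60-90 Laurent verification} already records the $24$ monomials of $P_- := G_0 M$ and of $P_+ := G_0 H$. It is therefore enough to compute one product of two sparse polynomials, for instance $P_- \cdot H$. Here $H=(z^2+1)(z+w)(zw+1)$ expands to only eight monomials, so the product has at most $192$ terms before cancellation. That is tedious but finite.

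\textbf{Step 2: reduce modulo the ideal.} Instead of expanding the right-hand side separately, I will show that $G_0 M H - (z^8-w^8)C$ is divisible by $z^{16}-1$ with quotient exactly $A$. One natural order is to reduce $G_0 M H$ modulo $z^8-w^8$ (viewed as monic in $z$ of degree $8$), replacing each $z^{8+k}$ by $w^8 z^k$. The quotient accumulated in this division should be $C$.

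This is delicate, however, because the ideal $(z^8-w^8, z^{16}-1)$ does not give a unique normal form with these choices. The cleaner check is simply to expand $(z^8-w^8)C + (z^{16}-1)A$ and compare it monomial by monomial with $G_0 M H$. I will organize both sides in tables indexed by $(\deg_z,\deg_w)$, as before.

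\textbf{Step 3: sanity checks.} Two quick consistency checks guard against arithmetic slips:
\begin{itemize}
\item Total degree and bidegree bounds. $G_0 M H$ has $z$-degree $19$, since $G_0$ has $z$-degree $11$, $M$ has $4$ and $H$ has $4$. The right-hand side has $(z^8)\cdot z^{13}$ from $C$, which must cancel against high-degree terms of $z^{16}A$. Matching these top-degree terms is a first check.
\item Vanishing on substitutions. Both sides vanish at $z=w$, because $z-w$ divides $G_0$ and also divides both $z^8-w^8$ and $A$. Evaluating at a few simple points, such as $z=1$ or $w=0$, gives further checks.
\end{itemize}

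\textbf{Main obstacle.} The only real difficulty is the size of the bookkeeping: roughly two hundred signed monomials on each side. I expect this step to be the main obstacle, and I would carry it out with a computer algebra system (Sage or Mathematica) and record the resulting coefficient table in the paper. By hand, the safest route is to factor out common pieces first. Both $G_0$ and $A$ contain the factor $(z-w)$, and both $G_0$ and $C$ contain $(z^2-1)$, so I would divide through by these before comparing the remaining cofactors.
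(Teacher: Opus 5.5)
Your plan is the paper's own proof: expand both sides and compare monomial coefficients. The paper records the $36$ surviving monomials $z^aw^b$, each with coefficient $\pm1$, and checks that they agree; the identity does hold, and dividing out the common factors $(z-w)$ and $(z^2-1)$ first is legitimate and makes the bookkeeping lighter. One correction to your sanity check: $G_0$ has $z$-degree $10$ (not $11$), so $G_0MH$ has $z$-degree $18$, and $C$ has $z$-degree $16$, so the leading terms of $z^8C$ and $z^{16}A$ cancel at $z$-degree $24$.
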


\begin{proof}
    Expanding both sides as polynomials in $z,w$ gives, in each case, a sum of $36$ monomials $z^aw^b$ with coefficients $\pm1$. The table below, indexed by $(a,b)=(\deg_z,\deg_w)$, records the coefficient of $z^aw^b$ on both sides:
    \begin{center}
    \begin{tabular}{c|c||c|c||c|c}
        $(a,b)$ & coeff. & $(a,b)$ & coeff. & $(a,b)$ & coeff. \\\hline
        $(0,4)$ & $+1$ & $(6,0)$ & $+1$ & $(14,2)$ & $+1$ \\
        $(0,6)$ & $-1$ & $(6,10)$ & $-1$ & $(14,8)$ & $-1$ \\
        $(1,3)$ & $+1$ & $(7,1)$ & $+1$ & $(15,1)$ & $+1$ \\
        $(1,7)$ & $-1$ & $(7,9)$ & $-1$ & $(15,9)$ & $-1$ \\
        $(2,2)$ & $-1$ & $(8,0)$ & $+1$ & $(16,2)$ & $+1$ \\
        $(2,4)$ & $+1$ & $(8,10)$ & $-1$ & $(16,4)$ & $-1$ \\
        $(2,6)$ & $-1$ & $(10,0)$ & $-1$ & $(16,6)$ & $+1$ \\
        $(2,8)$ & $+1$ & $(10,10)$ & $+1$ & $(16,8)$ & $-1$ \\
        $(3,1)$ & $-1$ & $(11,1)$ & $-1$ & $(17,3)$ & $-1$ \\
        $(3,9)$ & $+1$ & $(11,9)$ & $+1$ & $(17,7)$ & $+1$ \\
        $(4,2)$ & $-1$ & $(12,0)$ & $-1$ & $(18,4)$ & $-1$ \\
        $(4,8)$ & $+1$ & $(12,10)$ & $+1$ & $(18,6)$ & $+1$
    \end{tabular}
    \end{center}
    Since the coefficients agree term by term on both sides, $G_0 \cdot M \cdot H = (z^8-w^8)C + (z^{16}-1)A$.
\end{proof}

\bibliographystyle{alpha}
\bibliography{references}
\end{document}